\newtheorem{theorem}{Theorem}[section]
\newtheorem{lemma}[theorem]{Lemma}
\numberwithin{equation}{section}
\theoremstyle{definition}
\newtheorem{definition}[theorem]{Definition} 
\theoremstyle{remark}
\newcommand{\brac}[1]{\left(#1\right)}
\newcommand{\set}[2]{\{#1\,:\,#2\}}
\newcommand{\bb}{{\boldsymbol{b}}}
\newcommand{\be}{{\boldsymbol{e}}}
\newcommand{\bell}{{\boldsymbol{\ell}}}
\newcommand{\bs}{{\boldsymbol{s}}}
\newcommand{\bx}{{\boldsymbol{x}}}
\newcommand{\by}{{\boldsymbol{y}}}
\newcommand{\bz}{{\boldsymbol{z}}}
\newcommand{\bW}{{\boldsymbol{W}}}
\newcommand{\brho}{{\boldsymbol{\rho}}}
\newcommand{\bvarrho}{{\boldsymbol{\varrho}}}
\newcommand{\bsigma}{{\boldsymbol{\sigma}}}
\newcommand{\bphi}{{\boldsymbol{\phi}}}
\newcommand{\rd}{{\rm d}}
\def\II{\mathbb I}
\def\ZZ{{\mathbb Z}}
\def\RR{{\mathbb R}}
\def\RRd{{\mathbb R}^d}
\def\NN{{\mathbb N}}
\def\N{{\mathbb N}}
\def\NNd{{\NN}^d}
\def\II{{\mathbb I}}
\def\CC{{\mathbb C}}
\def\NN{{\mathbb N}}
\def\R{{\mathbb R}}
\def\RR{{\mathbb R}}
\def\Vv{{\mathbb P}}
\def\UU{{\mathbb U}}
\def\Vv{{\mathcal P}}
\def\FF{{\mathcal F}}
\def\Bb{{\mathcal B}}
\def\NNd{{\mathbb N}^d}
\def\RRd{{\mathbb R}^d}
\def\RRm{{\mathbb R}^m}
\def\IIm{{\mathbb I}^m}
\def\IIi{{\mathbb I}^\infty}
\def\RRi{{\mathbb R}^\infty}
\def\RRm{{\mathbb R}^m}
\def\UUi{{\mathbb U}^{\infty}}
\def\Bb{{\mathcal B}}
\def\Ll{{\mathcal L}}
\def\Oo{{\mathcal O}}
\def\Ss{{\mathcal S}}
\def\Vv{{\mathcal V}}
\def\II{{\mathbb I}}
\def\CC{{\mathbb C}}
\def\ZZ{{\mathbb Z}}
\def\NN{{\mathbb N}}
\def\RR{{\mathbb R}}
\def\FF{{\mathbb F}}
\def\NNd{{\mathbb N}^d}
\def\RRd{{\mathbb R}^d}
\def\RRi{{\mathbb R}^\infty}
\def\supp{\operatorname{supp}}
\def\dv{\operatorname{div}}
\newcommand{\norm}[2]{\left\|{#1}\right\|_{#2}}
\newcommand{\abs}[1]{\left|#1\right|}
\title{\sffamily {Deep ReLU neural network approximation in  Bochner spaces and applications to  
 parametric PDEs}}
\author[a]{Dinh D\~ung \footnote{Corresponding author}}
\affil[a]{Information Technology Institute, Vietnam National University, Hanoi
	\protect\\
	144 Xuan Thuy, Cau Giay, Hanoi, Vietnam
	\protect\\
	Email: dinhzung@gmail.com}
\author[b]{Van Kien Nguyen}
\affil[b]{Faculty of Basic Sciences, University of Transport and Communications
	\protect\\	No.3 Cau Giay Street, Lang Thuong Ward, Dong Da District,
	Hanoi, Vietnam
	\protect\\
	Email: kiennv@utc.edu.vn}
\author[c]{Duong Thanh Pham}
\affil[c]{Vietnamese German University,
		\protect\\ Ring road 4, Quarter 4, Thoi Hoa Ward, Ben Cat Town, Binh Duong Province, Vietnam
	\protect\\
	Email: duong.pt@vgu.edu.vn}
\date{\today}
\begin{document}
\maketitle

\begin{abstract}
We investigate  non-adaptive methods of deep ReLU neural network approximation in Bochner spaces $L_2({\mathbb U}^\infty, X, \mu)$ of  functions on ${\mathbb U}^\infty$ taking values in a separable Hilbert space $X$, where ${\mathbb U}^\infty$ is either ${\mathbb R}^\infty$ equipped with the standard Gaussian probability measure, or ${\mathbb I}^\infty:= [-1,1]^\infty$ equipped with the Jacobi probability  measure. Functions to be approximated are assumed to satisfy a certain weighted $\ell_2$-summability of the generalized chaos polynomial expansion coefficients with respect to the measure $\mu$.
We prove  the convergence rate of this approximation in terms of the size of  approximating deep ReLU neural networks. These results then are applied to approximation of 
the solution  to  parametric elliptic PDEs with random inputs for the lognormal and affine cases. 

	\medskip
	\noindent
	{\bf Keywords and Phrases}: High-dimensional approximation in Bochner spaces; Deep ReLU neural networks; Parametric elliptic PDEs with random inputs.
	
	\medskip
	\noindent
	{\bf Mathematics Subject Classifications (2020)}: 65C30, 41A25, 68T07, 65C20, 41A63. 
	
\end{abstract}


\section{Introduction}

 The aim of the present paper is to  construct deep ReLU neural networks for approximation of functions in Bochner spaces and to apply the obtained results to approximation of solutions to
 parametric and stochastic elliptic PDEs with lognormal or affine inputs. We investigate the convergence rate of this approximation in terms of the size of  approximating deep ReLU neural networks.
 
 The universal approximation capacity of neural networks has been known since the 1980's (\cite{Cyben89,HSW89, Funa90,Bar91}).
  In recent years, deep neural networks have been rapidly developed and 
  successfully applied to a wide range of fields.  The main advantage of deep neural networks over shallow ones is that they can output compositions of functions cheaply. 
 Since their application range is getting wider, theoretical analysis revealing reasons of these significant
practical improvements attracts substantial attention  \cite{ ABMM17,DDF.19,MPCB14,Te15,Te16}. 
 In the last several years, there has been a number of interesting papers that addressed the role of depth and architecture of deep neural networks in approximating functions   that possess  special  regularity properties such as analytic functions \cite{EWa18,Mha96}, differentiable functions \cite{PeVo18,Ya17a}, oscillatory functions \cite{GPEB21}, functions in  Sobolev or Besov spaces \cite{AlNo20,GKNV21,GKP20,Ya18}. High-dimensional approximations by deep neural networks have been studied in \cite{MoDu19, Suzu18,DN20}, and  their applications to high-dimensional PDEs in  \cite{SS18,EGJS18,OSZ21,GS20,GS21,GH21}.  Most of  these papers used  deep   ReLU  (Rectified Linear Unit)  neural networks  since the rectified linear unit is a simple and preferable activation function in many applications. The output of such  neural networks is  continuous piece-wise linear function which are easily and cheaply  computed. We refer the reader to the recent surveys  \cite{DHP21,Pet20} for various problems and aspects of neural network approximation and bibliography.
 
In computational uncertainty quantification, the problem of efficient  numerical approximation for  parametric and stochastic partial differential equations (PDEs) has been of great interest and achieved  significant progress in recent years.  There is a  vast number of works on this topic to not mention all of them. We  point out just some works \cite{BCDC17,BCM17,BCDM17,CoDe15a,CDS10,CDS11,Dung19,Dung21,EST18,HoSc14,ZDS19} which are directly related to our paper.  

Recently,  a number of works have been devoted to various problems and methods of  deep neural network approximation for parametric and stochastic PDEs with affine inputs on the compact set $\IIi:= [-1,1]^\infty$ and with the condition on uniform ellipticity, such as dimensionality reduction \cite{TB18}, deep neural network expression rates for the Taylor generalized polynomial chaos expansion  (gpc) of solutions to parametric elliptic PDEs \cite{SZ19}, reduced basis methods \cite{KPRS21} the problem of learning the discretized parameter-to-solution map in practice \cite{GPR.21}, Bayesian PDE inversion \cite{HSZ20}, etc.

Let $D \subset \RR^d$ be a bounded  Lipschitz domain.  Consider the diffusion elliptic equation 
\begin{equation} \label{ellip}
- \dv (a\nabla u)
\ = \
f \quad \text{in} \quad D,
\quad u|_{\partial D} \ = \ 0, 
\end{equation}
for  a given fixed right-hand side $f$ and a 
spatially variable scalar diffusion coefficient $a$.
Denote by $V:= H^1_0(D)$ the energy space and $H^{-1}(D)$ the dual space of $V$. Assume that  $f \in H^{-1}(D)$ (in what follows this preliminary assumption always holds without mention). If $a \in L_\infty(D)$ satisfies the ellipticity assumption
\begin{equation} \nonumber
0<a_{\min} \leq a \leq a_{\max}<\infty,
\end{equation}
by the well-known Lax-Milgram lemma, there exists a unique 
solution $u \in V$  to the equation~\eqref{ellip}  in the weak form
\begin{equation} \nonumber
\int_{D} a\nabla u \cdot \nabla v \, \rd \bx
\ = \
\langle f , v \rangle,  \quad \forall v \in V.
\end{equation}
Partial differential equations (PDEs) with parametric and stochastic inputs are a common model used  in science
	and engineering.  Depending on the 
	nature of the modeled object, the parameters involved in them  may be 
	either deterministic or random. Radom nature reflects the uncertainty in various parameters
	presented in the physical phenomenon modelled  by the equation.
	For the equation~\eqref{ellip}, 
	we consider  the diffusion coefficients having a parametric form $a=a(\by)$, where $\by=(y_j)_{j \in \NN}$
is a sequence of real-valued parameters ranging in the set 
$\UUi$ which is either  $\RRi$ or $\IIi$.
Denote by $u(\by)$ the solution to the 
parametric  diffusion elliptic equation 
\begin{equation} \label{p-ellip}
- {\rm div} (a(\by)\nabla u(\by))
\ = \
f \quad \text{in} \quad D,
\quad u(\by)|_{\partial D} \ = \ 0. 
\end{equation}	
The resulting solution operator maps
	$\by\in \UUi$ to $u(\by)\in V$. The objective is to 
achieve a numerical 
approximation of this complex map by a small number of parameters with a
guaranteed error in a given norm. 

In the present paper, we consider the lognormal case when $\UUi= \RRi$ and the diffusion coefficient $a$  is of the form
\begin{equation} \label{lognormal}
a(\by)=\exp(b(\by)), \quad {\text{with }}\ b(\by)=\sum_{j = 1}^\infty y_j\psi_j,
\end{equation}
 and $y_j$ are i.i.d. standard Gaussian random 
 variables, and the affine  case when $\UUi= \IIi$ and the diffusion coefficient $a$  is of the form
\begin{equation} \label{affine}
	a(\by)= \bar a + \sum_{j = 1}^\infty y_j\psi_j.
\end{equation}
Here $\bar a\in L_\infty(D)$ and $\psi_j \in L_\infty(D)$ for both the cases.

 Let us briefly describe the problem setting and main contribution of  the present paper.  

 We are interested in the problem of deep ReLU neural network approximation of the solution $u(\by)$ to  parametric and stochastic elliptic PDEs \eqref{p-ellip} with  lognormal inputs  \eqref{lognormal}
 or affine inputs \eqref{affine}.

  Recently, for  parametric  elliptic PDEs \eqref{p-ellip} with random inputs, we have investigated related problems of  sparse-grid polynomial interpolation approximation \cite{Dung21}, and of  deep ReLU neural network approximation based on polynomial interpolations \cite{Dung22}.
 A unified approach suggested in \cite{Dung21,Dung22}  to those problems, is to study non-adaptive methods of  sparse-grid polynomial interpolation and  deep ReLU neural network approximations  in genenal Bochner spaces which are able to be applied to parametric and stochastic elliptic PDEs. 
Following this approach, in the present paper, we 
 construct  non-adaptive methods of deep ReLU neural network approximation in Bochner spaces $L_2(\UUi, X, \mu)$ of  functions on $\UUi$ taking values in a separable Hilbert space $X$, where $\UUi$ is either $\RRi$ equipped with the standard Gaussian probability measure $\mu= \gamma$, or $\IIi:= [-1,1]^\infty$ equipped with the Jacobi probability  measure $\mu= \nu_{a,b}$.  Differing from the deep ReLU neural network approximation in \cite{Dung22}  which relies on polynomial interpolations, the approximation of  functions $ v \in L_2(\UUi, X, \mu)$ in the present paper is based  on an $m$-term truncation 
 $$
 S_m v(\by) := \sum_{j=1}^m v_{\bs^j} P_{\bs^j}(\by)
 $$
  of the orthonormal generalized polynomial chaos  (gpc) expansion  with respect to the measure $\mu$:
 \begin{equation} \label{series-P_bs}
 	v(\by)=\sum_{\bs\in\FF} v_\bs \,P_\bs(\by), \quad v_\bs \in X,
 \end{equation}
 where $\FF$ is  the set of all families of non-negative integers $\bs=(s_j)_{j \in \NN}$ with  finite support.
 Functions $v$ to be approximated are assumed to satisfy a certain weighted $\ell_2$-summability of the coefficients $(v_\bs)_{\bs \in \FF}$ of the expansion \eqref{series-P_bs}. 
For every integer $n \ge 4$, we construct  a non-adaptive  deep ReLU neural network $\bphi_n =(\phi_{\bs^j})_{j=1}^m$  on $\RRm$ with $m = \Oo(n/\log n)$, having size not greater than $n$ and $m$ outputs so that the approximation of $v$ by the function 
$$
\Phi_n (\by):= \sum_{j=1}^m v_{\bs^j} \phi_{\bs^j}(\by)
$$ 
gives the twofold  error bounds: 
 \begin{equation} \label{ErrorBnd}
\norm{v - \Phi_n}{L_2(\UUi, X, \mu)}  =	 \Oo\brac{m^{-1/q}}= \Oo\brac{\left(n/\log n\right)^{-1/q}}.
 \end{equation}
Here, the error bounds are much sharper than those estabilished in \cite{Dung22} for  deep ReLU neural network approximation based on polynomial interpolations. 

 These results are then applied to approximation in the space $L_2(\UUi, V, \mu)$ of the solution $u(\by)$ to  parametric and stochastic elliptic PDEs \eqref{p-ellip} with the lognormal inputs \eqref{lognormal}
 and the affine inputs \eqref{affine}. 
 They are also applied to the approximation of holomorphic functions on $\RRi$ in the present paper,  but we believe that these results  are applicable to holomorphic functions on $\IIi$.

  We give some comments on the difference of our contribution from the directly related paper \cite{SZ19}.  
  In \cite{SZ19}, the authors proved an  error bound  $\Oo\brac{n^{-(1/q - 1/2)}/(\log n \, \log \log n)}$ of the uniform approximation (or equivalently, the approximation in the norm of Bochner space 
  $L_\infty(\IIi, V, \lambda)$ with the uniform Lebesgue $\lambda$)  by deep ReLU neural networks of size $n$ of solution to parametric elliptic PDE \eqref{p-ellip} with affine inputs \eqref{affine}, based on  the  non-orthogonal Taylor  gpc expansion.  
In the present paper, we improved this result as $\Oo\brac{n^{-(1/q - 1/2)}/(\log n)}$ and extended this approximation to the Bochner space  $L_2(\UUi, V, \nu_{a,b})$  with the Jacobi probability  measure, based 
 the orthonormal Jacobi  gpc expansion. The  error bound of this extended approximation is as in the right-hand side of \eqref{ErrorBnd}.
  The most principal difference  of the present paper from  \cite{SZ19} is the results on the approximation in the norm of Bochner space 
 $L_2(\RRi, V, \gamma)$  by deep ReLU neural networks of size $n$ of solution to parametric elliptic PDE \eqref{p-ellip}  on non-compact set $\RRi$ with lognormal inputs \eqref{lognormal}, based on  the  orthonormal Hermite  gpc expansion. The construction of approximating deep ReLU neural networks and proof of the bound for the approximation error are much more technically complicated.
   Our main idea of constructing the approximating deep ReLU neural networks  $\bphi_n$ as well as the proof of  the error  bound \eqref{ErrorBnd} is that we first approximate the solution $u \in L_2(\RRi, V, \gamma)$   by the truncation $S_m u$ with error bound $\Oo(m^{-1/q})$. Then we construct a deep ReLU neural network  $\bphi_n$ on finite-dimensional space $\RRm$ that approximate $S_m u$ with error $\Oo(m^{-1/q})$.  Finally  we estimate  the size of the deep neural networks $\bphi_n$ as $n = \Oo\brac{m\log m}$. The construction and proofs of the bounds are based on the weighted $\ell_2$-summabilty of the gpc Hermite expansion coefficients of  $u$, the known realization of approximate multiplication by deep ReLU neural networks, and some known results on Gaussian-weighted polynomial approximation on $\RRm$.  The last but not least difference is, as noticed  all of the results of the present paper are deduced from a general theory of deep ReLU neural network in Bochner spaces.

Notice that the error bound of this deep ReLU neural network approximation is comparable to the error bound  of  the best  approximation of $v$ by   (linear non-adaptive) $n$-term truncations of the gpc expansion as well as of the approximation by the particular truncation $S_n v$ which is $\Oo(n^{-1/q})$ \cite{BCDM17,Dung21}. However, a deep ReLU neural network
 represents a continuous piece-wise linear  function defined on
	a number of polyhedral subdomains and therefore, is easily generated and computed in numerical implementation. For some PDEs, it has been shown that deep neural networks are capable of representing
	solutions without incurring the curse of dimensionality, see, for instance, \cite{JSW18,GJS19,HJKN19}. Therefore  deep ReLU neural networks are one of the  preferable tool in numerical solving (parametric) PDEs. We refer the reader to \cite{GPR.21,KPRS21,SZ19} for further discussion on the role of neural networks in the approximation
	of solutions to parametric PDEs.

In the present paper, we are concerned about  the parametric approximability for parametric and stochastic elliptic PDEs. Therefore, the results  themselves do not yield a practically realizable approximation since they do not cover the approximation of the gpc expansion coefficients which are functions of the spatial variable.  Moreover, the approximant $\Phi_n$, as we can see, is not a real deep ReLU networks, but just a combination of the gpc coefficients and the components of a deep ReLU network.  Naturally, it would be desirable to study the problem of full neural network approximation of the solution to  parametric and stochastic elliptic PDEs by combining the spatial and parametric domains based on fully discrete approximation  in \cite{BCDC17,Dung21}. We will discuss this problem in a forthcoming paper. 

The paper is organized as follows. 
In Section \ref{Deep ReLU neural networks}, we present a necessary knowledge  about deep ReLU neural networks.
Section \ref{Approximation in Bochner space with Gaussian measure}  is devoted to the investigation of non-adaptive methods of deep ReLU neural network approximation in Bochner space with Gaussian measure and applications to approximations  of the solution to  the 
parameterized  diffusion elliptic equation  \eqref{p-ellip} with lognormal inputs \eqref{lognormal}, and of  holomorphic functions on $\RRi$. In Section \ref{Approximation in Bochner space with Jacobi measure}, we study non-adaptive methods of deep ReLU neural network approximation in Bochner space with Jacobi measure and applications to approximations  of the solution to  the 
parameterized  diffusion elliptic equation  \eqref{p-ellip} with affine inputs \eqref{affine}. 

\medskip
\noindent
{\bf Notation} \  As usual, $\NN$ denotes the natural numbers, $\ZZ$  the integers, $\RR$ the real numbers and $ \NN_0:= \{s \in \ZZ: s \ge 0 \}$.
We denote  $\RR^\infty$ the
set of all sequences $\by = (y_j)_{j\in \NN}$ with $y_j\in \RR$. Denote by $\FF$  the set of all sequences of non-negative integers $\bs=(s_j)_{j \in \NN}$ such that their support $\nu_{\bs}:= \supp (\bs):= \{j \in \NN: s_j >0\}$ is a finite set. We use $(\be^j)_{j\in \NN}$ for the standard basis of $\ell_2(\NN)$.
For a set $G$, we denote by
$|G|$ the cardinality of $G$. 
We use letters $C$  and $K$ to denote general 
positive constants which may take different values, and 
$C_{\alpha,\beta,\ldots}$ and $K_{\alpha,\beta,\ldots}$  when we 
want to emphasize the dependence of these  constants on 
$\alpha,\beta,\ldots$, or when this dependence is 
important in a particular situation.

\section{Deep ReLU neural networks}
\label{Deep ReLU neural networks}

In this section, we present some necessary definitions and 
elementary facts on deep ReLU neural networks.
There is a wide variety of neural network architectures and each of them is adapted to specific tasks. As in \cite{Ya17a}, we will use  deep feed-forward neural networks which
  allow  connections between neurons  in a layer with neurons in any preceding  layers (but not in the same layer). This improves the bound of the  weight of parallelization network via the weights of the component networks (see Lemma \ref{lem:parallel} below) in comparing with the standard deep feed-forward neural networks which allows the connection between neurons only in neighboring layers (comp. \cite{SZ19}).
 
 In deep neural network approximation, we will employ the ReLU activation function that is defined by 
 $\sigma(t):= \max\{t,0\}$, $t\in \RR$.  We will use the notation 
 $\sigma(\bx):= (\sigma(x_1),\ldots, \sigma(x_d))$ for $\bx=(x_1,\ldots,x_d) \in \RRd$.

\begin{definition}\label{def:DNN}
	Let $d,L\in \NN$, $L\geq 2$, $N_0=d$,  and $N_1,\ldots,N_{L}\in \NN$. Let 
	$\bW^\ell=\brac{w^\ell_{i,j}} \in \RR^{N_\ell\times \brac{\sum_{i=0}^{\ell-1}N_i}}$, $\ell=1,\ldots,L$, be 
	$N_\ell\times \brac{\sum_{i=0}^{\ell-1}N_i}$ matrices, and $\bb^\ell =(b^\ell_j)\in \RR^{N_\ell}$.    A  ReLU neural network  $\Phi$ with input dimension $d$, output dimension  $N_L$ and $L$ layers
		is  a sequence of matrix-vector tuples
		$$
		\Phi=\brac{(\bW^1,\bb^1),\ldots,(\bW^L,\bb^L)},
		$$
		in which the following
	computation scheme is implemented
	\begin{align*}
		\bz^0&: = \bx \in \RR^d;
		\\
		\bz^\ell &: = \sigma\brac{\bW^{\ell}\brac{\bz^0,\ldots,\bz^{\ell-1}}^{{\rm T}}+\bb^\ell}, \ \ \ell=1,\ldots,L-1;
		\\
		\bz^L&:= \bW^L{\brac{\bz^0,\ldots,\bz^{L-1}}^{{\rm T}}} + \bb^L.
	\end{align*}
	We call $\bz^0$ the input  and with an 
	ambiguity denote 
	$\Phi(\bx):= \bz^L$ the output of $\Phi$  and in some places we identify a  ReLU neural network  with its output.
		We will use the following terminology.
		\begin{itemize}
			\item The number of layers $L(\Phi)=L$  is the depth of $\Phi$;
			\item The number of nonzero $w^\ell_{i,j}$ and $b^\ell_j$  is the  size of $\Phi$ and denoted by $W(\Phi)$;
			\item When $L(\Phi) \ge 3$, $\Phi$ is called a deep neural network, and otherwise, a shallow neural network.
		\end{itemize}
\end{definition}

The following two results are easy to verify from the definition above. We also refer the reader to \cite[Remark 2.9 and Lemma 2.11]{GKP20} for further remarks and comments.

\begin{lemma}[Parallelization]\label{lem:parallel}
	Let $N\in \NN$,  $\lambda_j\in \RR$, $j=1,\ldots,N$. Let $\Phi_j$, $j=1,\ldots,N$ be deep ReLU neural networks with input dimension $d$. Then  we can explicitly construct a deep ReLU neural network  denoted by $\Phi$ so that 
	$$
	\Phi(\bx)
	=
\begin{pmatrix}
\Phi_1(\bx)\\
\ldots\\
\Phi_N(\bx)
\end{pmatrix}
,\quad \bx\in \RR^d.
	$$ 
	Moreover, we have 
	$$
	W(\Phi){\le}\sum_{j=1}^NW(\Phi_j)
 \qquad \text{and}\qquad
	L(\Phi)= \max_{j=1,\ldots,N} L(\Phi_j).
	$$
	 The network $\Phi$ is called the parallelization network of $\Phi_j$, $j=1,\ldots,N$ and denoted by $\Phi=(\Phi_j)_{j=1}^N$.
\end{lemma}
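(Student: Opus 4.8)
The plan is to build $\Phi$ by running all $N$ networks $\Phi_j$ side by side on the shared input $\bx$ and then forming the linear combination $\sum_{j=1}^N\lambda_j\Phi_j(\bx)$ in a single common final layer. Set $L:=\max_{j=1,\ldots,N}L_j$; this will be the depth of $\Phi$. Write $\Phi_j=\big((\bW^{j,1},\bb^{j,1}),\ldots,(\bW^{j,L_j},\bb^{j,L_j})\big)$ and denote by $\bz^{j,\ell}$ its layer-$\ell$ activations. The crucial feature I would exploit is that the architecture of Definition~\ref{def:DNN} permits each layer to read from \emph{all} preceding layers, not merely the adjacent one; this lets me keep every hidden (ReLU) computation of $\Phi_j$ in layers $1,\ldots,L_j-1$ of $\Phi$ and postpone the affine output $\bz^{j,L_j}=\bW^{j,L_j}(\bz^{j,0},\ldots,\bz^{j,L_j-1})^{{\rm T}}+\bb^{j,L_j}$ to the common final layer $L$, so that no output of $\Phi_j$ is ever passed through an extra $\sigma$.

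Concretely, for each hidden index $\ell=1,\ldots,L-1$ I would let the layer-$\ell$ neurons of $\Phi$ be the concatenation of the layer-$\ell$ neurons of those $\Phi_j$ with $L_j>\ell$, and fill the weight matrix acting on layers $0,\ldots,\ell-1$ in block form: the block corresponding to $\Phi_j$ is exactly its own matrix, and all cross-network blocks are set to zero, since each $\Phi_j$ depends only on $\bx=\bz^0$ and on its own previous activations. A straightforward induction on $\ell$ then shows that the layer-$\ell$ activation vector of $\Phi$ is precisely the concatenation of the $\bz^{j,\ell}$. For the final layer I would use the skip connections to read, for each $j$, from the layers $0,\ldots,L_j-1$ that carry $\Phi_j$'s hidden activations, set the corresponding weight block to $\lambda_j\bW^{j,L_j}$ placed according to the concatenation, and set the output bias to $\sum_{j=1}^N\lambda_j\bb^{j,L_j}$. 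Since the final layer carries no $\sigma$, its output is exactly $\sum_{j=1}^N\lambda_j\big(\bW^{j,L_j}(\bz^{j,0},\ldots,\bz^{j,L_j-1})^{{\rm T}}+\bb^{j,L_j}\big)=\sum_{j=1}^N\lambda_j\Phi_j(\bx)$, as required.

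It remains to read off the size and depth. By construction $L(\Phi)=L=\max_{j}L_j$. Every nonzero entry of a weight matrix or bias vector of $\Phi$ arises from a single nonzero entry of some $\Phi_j$, the final-layer entries being its entries scaled by $\lambda_j$, which creates no new nonzeros; hence $W(\Phi)\le\sum_{j=1}^N W_j$. I expect the only genuinely delicate point to be the bookkeeping for networks of differing depths: were we restricted to the usual strictly layered feed-forward model, synchronizing the shorter networks to depth $L$ would force the insertion of identity ``carry'' channels, inflating the size beyond $\sum_{j=1}^N W_j$. The architecture of Definition~\ref{def:DNN}, in which a layer may be wired to any earlier layer, is exactly what removes this overhead and yields the clean bound.
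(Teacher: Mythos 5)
Your proof is correct and is exactly the argument the paper intends: the lemma is left unproved there (``easy to verify from the definition''), and the feature the paper emphasizes when introducing Definition~\ref{def:DNN} --- that a layer may connect to \emph{all} preceding layers --- is precisely the mechanism you exploit to run the $\Phi_j$ side by side in block-diagonal form, postpone each affine output map $\lambda_j\bW^{j,L_j}$ to the common final layer, and obtain $L(\Phi)=\max_{j}L_j$ and $W(\Phi)\le\sum_{j=1}^N W_j$ without identity carry channels. Nothing is missing; your handling of the shared input blocks and the summed bias (whose nonzero entries can only merge, never multiply) settles the one delicate bookkeeping point.
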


\begin{lemma}[Concatenation]\label{lem-concatenation} Let $\Phi_1$ and $\Phi_2$ be two ReLU neural networks   such that output layer of $\Phi_1$ has the same dimension as input layer of $\Phi_2$. Then, we can explicitly construct a ReLU neural network $\Phi$ such that $\Phi(\bx)=\Phi_2(\Phi_1(\bx))$ for $\bx\in \RR^d$. Moreover we have
	$$
	W(\Phi)\leq W(\Phi_1)+W(\Phi_2) \qquad \text{and}\qquad L(\Phi) = L(\Phi_1)+L(\Phi_2).
	$$
The network $\Phi$ is called the concatenation network of $\Phi_1$ and $\Phi_2$.
\end{lemma}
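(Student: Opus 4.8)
The plan is to build $\Phi$ by stacking the two networks: its first $L_1 := L(\Phi_1)$ layers reproduce $\Phi_1$ and its last $L_2 := L(\Phi_2)$ layers reproduce $\Phi_2$, so that the depth $L_1 + L_2$ is forced by construction. Write $\Phi_1 = \brac{(\bW_1^\ell,\bb_1^\ell)}_{\ell=1}^{L_1}$ with layer outputs $\bz_1^0 = \bx, \bz_1^1,\ldots,\bz_1^{L_1}$, and $\Phi_2 = \brac{(\bW_2^\ell,\bb_2^\ell)}_{\ell=1}^{L_2}$ with layer outputs $\bz_2^0,\ldots,\bz_2^{L_2}$, where by hypothesis the input dimension of $\Phi_2$ equals the output dimension of $\Phi_1$. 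For the combined network I would take $\bz^\ell := \bz_1^\ell$ for $\ell \le L_1$ and, declaring $\bz_2^0 := \bz_1^{L_1} = \Phi_1(\bx)$, set $\bz^{L_1+\ell} := \bz_2^\ell$ for $1 \le \ell \le L_2$. Weights are inherited verbatim: layers $1,\ldots,L_1$ use $(\bW_1^\ell,\bb_1^\ell)$ unchanged, while layer $L_1+\ell$ uses $(\bW_2^\ell,\bb_2^\ell)$, with the block that $\Phi_2$'s $\ell$-th layer applied to $\bz_2^0,\ldots,\bz_2^{\ell-1}$ now applied to $\bz^{L_1},\bz^{L_1+1},\ldots,\bz^{L_1+\ell-1}$, and the blocks acting on the earlier combined layers set to zero. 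The all–preceding–layer connectivity of Definition~\ref{def:DNN} is exactly what lets layer $L_1+\ell$ reach back to $\bz^{L_1}=\Phi_1(\bx)$ directly, so that no auxiliary copy layers are needed.

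Next I would verify by induction on the layer index that this network computes $\Phi_2(\Phi_1(\bx))$. For $\ell \le L_1$ the outputs coincide with those of $\Phi_1$ by construction, so $\bz^{L_1} = \Phi_1(\bx)$; for the upper block, matching $\bz^{L_1+\ell}$ against the defining recursion of $\Phi_2$ evaluated at $\bz_2^0 = \Phi_1(\bx)$ gives $\bz^{L_1+\ell} = \bz_2^\ell$, whence $\Phi(\bx) = \bz^{L_1+L_2} = \bz_2^{L_2} = \Phi_2(\Phi_1(\bx))$.

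The bounds are then read off. The depth equals $L_1+L_2$ by construction, giving $L(\Phi) = L(\Phi_1)+L(\Phi_2)$. For the size, every nonzero entry of a weight or bias of $\Phi$ is a nonzero entry of some $(\bW_1^\ell,\bb_1^\ell)$ or $(\bW_2^\ell,\bb_2^\ell)$ — the only new entries introduced are the zeros padding the blocks over the ``gap'' layers — and no original entry is duplicated, so a direct count yields $W(\Phi) \le W(\Phi_1)+W(\Phi_2)$.

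The step I expect to be the main obstacle is the junction at layer $L_1$. In the stand-alone network $\Phi_1$ the last layer is affine, whereas in $\Phi$ it sits among the intermediate layers, to which Definition~\ref{def:DNN} applies the activation $\sigma$; a naive application of $\sigma$ would replace $\Phi_1(\bx)$ by $\sigma(\Phi_1(\bx))$ and corrupt the composition whenever $\Phi_1(\bx)$ has negative components. The way I would handle this is to pass the affine output of $\Phi_1$ forward unactivated, treating $\bz^{L_1}=\Phi_1(\bx)$ as the raw input slot $\bz_2^0$ of the $\Phi_2$–block, which is legitimate because the input layer of any (sub)network carries no activation and, thanks to the all–preceding–layer connectivity, the $\Phi_2$–weights read this slot through the same linear positions they used for $\bz_2^0$; the elementary identity $t = \sigma(t) - \sigma(-t)$ shows how a real value can in any case be carried through one ReLU layer. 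Getting this transition right while keeping every weight of $\Phi_1$ and $\Phi_2$ intact is the only delicate point; once it is settled, the depth and size accounting above is routine.
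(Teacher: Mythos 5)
Your instinct that the junction at layer $L_1$ is the crux is correct, but your resolution of it does not work, and this is a genuine gap. Under Definition~\ref{def:DNN} the computation scheme is rigid: in the combined network every layer $\ell\le L-1$ has the activation applied, $\bz^\ell=\sigma\brac{\bW^\ell(\bz^0,\ldots,\bz^{\ell-1})^{\rm T}+\bb^\ell}$, and only the final layer $L=L_1+L_2$ is affine. Since $L_2\ge 2$, the position $L_1$ is a hidden layer of $\Phi$, so the definition forces $\bz^{L_1}=\sigma(\Phi_1(\bx))$; there is no such thing as a ``raw input slot'' in the middle of a network that ``carries no activation.'' Declaring $\bz^{L_1}=\Phi_1(\bx)$ unactivated means the object you have constructed is simply not a ReLU neural network in the sense of Definition~\ref{def:DNN}. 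The all-preceding-layer connectivity does not rescue this: it lets the layers of the $\Phi_2$-block read the vector stored at position $L_1$ linearly, but that vector is the activated value $\sigma(\Phi_1(\bx))$, which differs from $\Phi_1(\bx)$ whenever $\Phi_1(\bx)$ has a negative component.

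The fallback you mention, $t=\sigma(t)-\sigma(-t)$, does repair correctness but invalidates your size accounting. To materialize $\Phi_1(\bx)$ at layer $L_1$ one must compute both $\sigma\brac{\bW_1^{L_1}(\cdots)+\bb_1^{L_1}}$ and $\sigma\brac{-\bW_1^{L_1}(\cdots)-\bb_1^{L_1}}$, duplicating every nonzero entry of $\Phi_1$'s output layer; and every layer of $\Phi_2$ that reads $\bz_2^0$ through some block of $\bW_2^\ell$ must now apply that block and its negative to the two halves of layer $L_1$, duplicating those entries as well. So the claim ``no original entry is duplicated'' fails, and what this construction actually yields is $L(\Phi)=L_1+L_2$ together with $W(\Phi)\le W(\Phi_1)+W(\Phi_2)+A_1+B_2\le 2\brac{W(\Phi_1)+W(\Phi_2)}$, where $A_1$ is the number of nonzero entries in $\Phi_1$'s output layer and $B_2$ the number of nonzero entries of $\Phi_2$ connecting to its input. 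This factor $2$ is exactly what appears in the sparse-concatenation results the paper points to (the paper gives no proof of this lemma, calling it easy to verify and citing \cite[Remark 2.9 and Lemma 2.11]{GKP20}, where the bound is of the form $2W(\Phi_1)+2W(\Phi_2)$). The alternative repair --- merging $\Phi_1$'s affine output layer with $\Phi_2$'s first layer into a single layer --- avoids duplication but gives depth $L_1+L_2-1$ and a weight matrix which is a product of two weight matrices, whose number of nonzero entries is not controlled by $W(\Phi_1)+W(\Phi_2)$ at all. In short: as written your network violates the definition, and once repaired it proves only the factor-$2$ bound, not the stated bound $W(\Phi)\le W(\Phi_1)+W(\Phi_2)$; for the paper's applications the factor $2$ is harmless since it is absorbed into generic constants, but the accounting has to be redone honestly.
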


The following lemma is easily deduced from \cite[Propositions 3.1 and 3.3]{SZ19} and \cite[Proposition 2]{MoDu19}.

	\begin{lemma} \label{lem:multi}
		Let $\bell \in \NNd$. For  every $\delta \in (0,1)$, $d\in \NN$, $d\geq 2$, we can explicitly construct  a deep  ReLU neural network  $\Phi_P$  so that 
		$$
		\sup_{ \bx \in [-1,1]^d} \Bigg|\prod_{j=1}^d x_j^{\ell_j} - \Phi_P(\bx) \Bigg| \leq \delta. 
		$$
		Furthermore, if $x_j=0$ for some $j\in \{1,\ldots,d\}$ then $\Phi_P(\bx)=0$ and there exists a constant $C>0$ independent of $\delta$ and $d$ such that
		$$
	W(\Phi_P) \leq C(1+ |\bell |_1\log (|\bell |_1\delta^{-1})) 
		\qquad \text{and}\qquad
		L(\Phi_P)  \leq C(1+\log  |\bell |_1\log(|\bell |_1\delta^{-1})) \,.
		$$
	\end{lemma}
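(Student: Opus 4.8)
The plan is to realize $\Phi_P$ by the classical Yarotsky-type construction: build the $d$-fold product from repeated approximate binary multiplications arranged in a balanced binary tree, and then track how the per-node error propagates to the root. The starting point is a ReLU approximation of the square. For a target accuracy $\eta\in(0,1)$ I would construct $\Phi_{\rm sq}$ with $\sup_{t\in[-1,1]}|t^2-\Phi_{\rm sq}(t)|\le\eta$ using the telescoping sawtooth identity: writing $g$ for the tent map on $[0,1]$ and $g_s$ for its $s$-fold self-composition, one has $t^2=t-\sum_{s\ge1}4^{-s}g_s(t)$, and truncating at $s=m$ gives error $\le 4^{-m}$. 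Each $g_s$ is a depth-$O(s)$ network, so by concatenation (Lemma \ref{lem-concatenation}) and parallelization (Lemma \ref{lem:parallel}) the truncated sum has size and depth $O(m)=O(\log\eta^{-1})$. Replacing $t\mapsto\Phi_{\rm sq}(t)$ by $t\mapsto\tfrac12\big(\Phi_{\rm sq}(\sigma(t))+\Phi_{\rm sq}(\sigma(-t))\big)$, which a ReLU net realizes since $t=\sigma(t)-\sigma(-t)$, makes $\Phi_{\rm sq}$ even.

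Next, via the polarization identity $xy=\tfrac14\big((x+y)^2-(x-y)^2\big)$ (after rescaling so the arguments stay in $[-1,1]$), I set $\Phi_\times(x,y):=\tfrac14\big(\Phi_{\rm sq}(\tfrac{x+y}{2})-\Phi_{\rm sq}(\tfrac{x-y}{2})\big)$, giving $|\Phi_\times(x,y)-xy|\le\eta'$ on $[-1,1]^2$ with size and depth $O(\log(\eta')^{-1})$. Because $\Phi_{\rm sq}$ is even, $\Phi_\times(x,0)=\Phi_\times(0,y)=0$ exactly, which is the vanishing property required by the statement; I may also clamp the output into $[-1,1]$ via $t\mapsto\sigma(t+1)-\sigma(t-1)-1$ so that intermediate values never leave the unit interval. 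To form $\prod_{j=1}^d x_j$ I then arrange $\lceil\log_2 d\rceil$ layers of $\Phi_\times$-blocks in a balanced tree (padding with $1$'s if $d$ is not a power of two), assembling the whole of $\Phi_P$ by Lemmas \ref{lem:parallel}--\ref{lem-concatenation}, so that its depth is $O(\log d)$ times that of one block and its size at most $(d-1)$ times that of one block.

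The delicate point, and the step I expect to be the main obstacle, is controlling error accumulation, since deviations could in principle compound multiplicatively across the $\log_2 d$ levels. I would prove by induction on the level $k$ that the maximal deviation $e_k$ of an approximate partial product from its true value (the latter always lying in $[-1,1]$) satisfies $e_{k+1}\le\eta'+2e_k+e_k^2$, whence $e_k\le\eta'\,(3^{k}-1)/2$ and so $e_{\lceil\log_2 d\rceil}\le C\eta'\,d^{\log_2 3}$. Choosing $\eta'$ with $\log(\eta')^{-1}=O(\log(d\delta^{-1}))$ forces the root error below $\delta$; crucially this affects only the logarithmic size and depth of each block, so it does not spoil the final counts. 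The vanishing property propagates correctly: if some $x_j=0$, the $\Phi_\times$-block at its first ancestor outputs $0$ exactly, and $\Phi_\times(0,\cdot)=0$ then forces $0$ up to the root, giving $\Phi_P(\bx)=0$.

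Assembling the estimates yields $L(\Phi_P)=O(\log d)\cdot O(\log(d\delta^{-1}))=O\big(1+\log d\,\log(d\delta^{-1})\big)$ and $W(\Phi_P)=O(d)\cdot O(\log(d\delta^{-1}))=O\big(1+d\,\log(d\delta^{-1})\big)$, with constants independent of $d$ and $\delta$, exactly the claimed bounds. As noted, the only genuinely nontrivial part is the error-propagation bookkeeping of the third paragraph, together with keeping all intermediate values inside $[-1,1]$ so that each block is evaluated on its domain of guaranteed accuracy.
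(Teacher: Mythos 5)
The paper does not actually prove this lemma: it is quoted from \cite[Proposition 3.3]{SZ19}, with the vanishing property for general $d$ referred to \cite[Proposition 2]{MoDu19}, and your construction --- Yarotsky's sawtooth approximation of the square, polarization for the binary product, a balanced binary tree with clamping, and the inductive error recursion $e_{k+1}\le \eta'+2e_k+e_k^2$ --- is precisely the standard argument behind those citations, including the key observation that evenness of the squaring network forces $\Phi_\times(x,0)=\Phi_\times(0,y)=0$, so that an exact zero propagates to the root. Your error-propagation bookkeeping and the resulting size/depth counts are correct and give the stated bounds. Only two harmless constant slips should be repaired: the evenized square should be $\Phi_{\rm sq}(\sigma(t))+\Phi_{\rm sq}(\sigma(-t))$ \emph{without} the factor $\tfrac12$ (since $\Phi_{\rm sq}(0)=0$, your formula approximates $t^2/2$ rather than $t^2$), and the rescaled polarization should read $\Phi_\times(x,y)=\Phi_{\rm sq}\big(\tfrac{x+y}{2}\big)-\Phi_{\rm sq}\big(\tfrac{x-y}{2}\big)$ \emph{without} the prefactor $\tfrac14$, which is already absorbed by halving the arguments (as written, your $\Phi_\times$ approximates $xy/4$).
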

	
For $j=0,1$, let $\varphi_j$ be the continuous piece-wise {linear} functions with break points $\{-2, -1,1,2\}$  such that $\supp(\varphi_j) = [-2,2]$, $\varphi_0(x)=1$ and $\varphi_1(x)=x$ if $x\in [-1,1]$.
Notice that $\varphi_j $ can be realized exactly by a shallow  ReLU neural network (still denoted by  $\varphi_j$), i.e., 
$$
\varphi_0(x)= \sigma(x-2) - 3\sigma(x-1) + 4\sigma(x) - 3\sigma(x+1) + \sigma(x+2), 
$$
and 
$$
\varphi_1(x)= \sigma(x-2) - 2\sigma(x-1) +  2\sigma(x+1) -\sigma(x+2).
$$

	\begin{lemma} \label{lem-product-varphi}
	Let $\bell \in \NNd$.	Let $\varphi$ be either $\varphi_0$ or $\varphi_1$.
		For  every $\delta \in (0,1)$, $d\in \NN$, we can explicitly construct  a deep  ReLU neural network  $\Phi$  so that 
		$$
		\sup_{ \bx \in [-2,2]^d} \Bigg|\prod_{j=1}^d\varphi^{\ell_j}(x_j) - \Phi(\bx) \Bigg| \leq \delta. 
		$$
		Furthermore, $\supp(\Phi )\subset [-2,2]^d$ and there exists a constant $C>0$ independent of $\delta$ and $d$ such that
\begin{equation}\label{eq-varphi}
		W(\Phi) \leq C\big(1+ |\bell |_1\log (|\bell |_1\delta^{-1}) \big)
\qquad \text{and}\qquad
L(\Phi)  \leq C\big(1+\log |\bell |_1\log(|\bell |_1\delta^{-1})) \,.
\end{equation}
	\end{lemma}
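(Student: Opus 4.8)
The plan is to realize $\Phi$ as the concatenation of the exact single-variable networks $\varphi$ applied coordinatewise with the approximate product network from Lemma \ref{lem:multi}. The crucial preliminary observation is that, for $\varphi$ equal to either $\varphi_0$ or $\varphi_1$, one has $\varphi(x)\in[-1,1]$ for every $x\in[-2,2]$ (indeed $\varphi_1$ ranges over $[-1,1]$ and $\varphi_0$ over $[0,1]$), while $\varphi(x)=0$ for $|x|\ge 2$. Hence feeding the values $\varphi(x_j)$ into the product network $\Phi_P$ of Lemma \ref{lem:multi} keeps all of its arguments inside the cube $[-1,1]^d$ on which its estimate is valid, and this is exactly what couples the two lemmas.

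First I would dispose of the trivial case $d=1$, where $\prod_{j=1}^1\varphi(x_j)=\varphi(x_1)$ is realized \emph{exactly} by $\varphi$ itself, so $\Phi:=\varphi$ has error $0\le\delta$, constant size and depth, and support in $[-2,2]$; the bounds \eqref{eq-varphi} then hold trivially. For $d\ge 2$ I would proceed in three steps. Step one: build a network $\Psi$ with input dimension $d$ whose output is $\brac{\varphi(x_1),\ldots,\varphi(x_d)}$, obtained by placing $d$ copies of the constant-size network $\varphi$ in parallel over the disjoint input coordinates; since each copy has size at most a fixed constant and a fixed (constant) depth, $\Psi$ satisfies $W(\Psi)\le Cd$ and $L(\Psi)\le C$. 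Step two: invoke Lemma \ref{lem:multi} to obtain $\Phi_P$ with accuracy $\delta$ on $[-1,1]^d$ together with its stated size and depth bounds. Step three: set $\Phi:=\Phi_P\circ\Psi$ using Lemma \ref{lem-concatenation}, whose hypothesis is met because the output dimension $d$ of $\Psi$ equals the input dimension $d$ of $\Phi_P$.

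The error estimate then follows immediately: for $\bx\in[-2,2]^d$ the vector $\brac{\varphi(x_1),\ldots,\varphi(x_d)}$ lies in $[-1,1]^d$, so by Lemma \ref{lem:multi}
$$\Bigg|\prod_{j=1}^d\varphi(x_j)-\Phi(\bx)\Bigg| = \Bigg|\prod_{j=1}^d\varphi(x_j)-\Phi_P\brac{\varphi(x_1),\ldots,\varphi(x_d)}\Bigg|\le\delta .$$
For the support claim I would use the zero-propagation property guaranteed by Lemma \ref{lem:multi}: if $\bx\notin[-2,2]^d$, then some coordinate satisfies $|x_{j_0}|\ge 2$, whence $\varphi(x_{j_0})=0$; since $\Phi_P$ vanishes whenever one of its arguments is zero, we get $\Phi(\bx)=0$, proving $\supp(\Phi)\subset[-2,2]^d$. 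Finally, the bounds \eqref{eq-varphi} come from combining Lemma \ref{lem-concatenation} with the estimates for $\Psi$ and $\Phi_P$: $W(\Phi)\le W(\Psi)+W(\Phi_P)\le Cd+C\brac{1+d\log(d\delta^{-1})}$ and $L(\Phi)=L(\Psi)+L(\Phi_P)\le C+C\brac{1+\log d\log(d\delta^{-1})}$, both of which absorb into the claimed right-hand sides.

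I would expect no serious obstacle here; the only points requiring care are the elementary verification that $\varphi$ maps $[-2,2]$ into $[-1,1]$ and vanishes outside $[-2,2]$ (so that the composed arguments satisfy the hypotheses of Lemma \ref{lem:multi}), and the observation that the coordinatewise application of $\varphi$ is itself a legitimate deep ReLU network of size $\Oo(d)$ and bounded depth, which is what lets the concatenation inherit the desired bounds.
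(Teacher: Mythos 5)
Your proposal is correct and follows exactly the paper's (very brief) argument: the paper also constructs $\Phi$ as the concatenation of the coordinatewise networks $\{\varphi(x_j)\}_{j=1}^d$ with the product network $\Phi_P$ of Lemma \ref{lem:multi}, and obtains \eqref{eq-varphi} from Lemmas \ref{lem-concatenation} and \ref{lem:multi}. Your write-up merely fills in the details the paper leaves implicit (the range check $\varphi([-2,2])\subset[-1,1]$, the support argument via the zero-propagation property of $\Phi_P$, and the case $d=1$), all of which are handled correctly.
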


	\begin{proof}
		This simple lemma is proven in \cite{Dung22}. For completeness, let us recall the proof.
		The network $\Phi$ is constructed as a concatenation of $\{\varphi(x_j)\}_{j=1}^d$ and $\Phi_P$. Notice that
		$$
		\{\varphi(x_j)\}_{j=1}^d \subset [-1,1]^d, \ \ \forall \bx \in [-2,2]^d.
		$$
		 Hence, the estimate \eqref{eq-varphi} follows directly from Lemmata  \ref{lem-concatenation} and \ref{lem:multi} and the estimates $W(\varphi_0)\leq 10$, $W(\varphi_1)\leq 8$.
\hfill
	\end{proof}

\section{Approximation in Bochner spaces with Gaussian measure}
\label{Approximation in Bochner space with Gaussian measure}

In this section, we investigate  non-adaptive methods of deep ReLU neural network approximation  in the Bochner space $L_2(\RRi,X,\gamma)$ with Gaussian measure $\gamma$ of functions   on $\RRi$ taking values in a separable Hilbert space $X$ and satisfying a weighted $\ell_2$-summability  of the Hermite gpc expansion coefficients. We
construct such methods and prove convergence rates of the approximation by them. These methods are constructed via the truncations on finite-dimensional supercube of the truncated Hermite gpc expansion  of functions in $L_2(\RRi,X,\gamma)$ to be approximated.
The results are then applied to the
approximation of the solution  to the parametric elliptic PDEs \eqref{p-ellip} with lognormal inputs \eqref{lognormal} on  $\RRi$ and of holomorphic maps on $\RRi$.

\subsection{Approximation by truncations of the Hermite gpc expansion}
\label{Approximation by truncations of the Hermite gpc expansion}

We first recall a concept of infinite tensor product 
of probability measures. 
Let $\mu(y)$ be a probability measure on $\UU$.
We introduce the probability measure $\mu(\by)$ on $\UUi$ as 
the infinite tensor product of the probability 
measures $\mu(y_i)$:
\begin{equation} \nonumber
\mu(\by) 
:= \ 
\bigotimes_{j \in \NN} \mu(y_j) , \quad \by = (y_j)_{j \in \NN} \in \UUi.
\end{equation}
If $\varrho (y)$ is the density of $\mu(y)$, i.e., $\rd \mu(y) = \varrho(y) \rd y $, then we  write
\begin{equation} \nonumber
\mbox{d} \mu(\by) 
:= \ 
\bigotimes_{j \in \NN} \varrho(y_j) \rd (y_j), \quad \by = (y_j)_{j \in \NN} \in \UUi.
\end{equation}
(For details on infinite tensor product of probability measures, see, e.g., \cite[pp. 429--435]{HS65}.)

Let $X$ be a separable Hilbert space. 
The probability measure $\mu(\by)$ induces  the Bochner space $L_2(\UUi,X,\mu)$ of $\mu$-measurable mappings $v$ from $\UUi$ to $X$ for which the norm 
\begin{equation} \nonumber
\|v\|_{L_2( \UUi,X,\mu)}
:= \
\left(\int_{ \UUi} \|v(\cdot,\by)\|_X^2 \, \mbox{d} \mu(\by) \right)^{1/2} \ < \ \infty.
\end{equation}

We consider approximations in the space $L_2(\RRi,X,\gamma)$  with   
 the infinite tensor product  standard Gaussian probability measure $\gamma$ on $\RRi$. More precisely,
let $\gamma(y)$ be the probability measure on $\RR$ 
with the standard Gaussian density: 
\begin{equation*} \label{g}
\rd\gamma(y):=g(y)\,\rd y, \quad g(y):=\frac 1 {\sqrt{2\pi}} e^{-y^2/2}.
\end{equation*}
Then the infinite tensor product standard Gaussian 
probability measure $\gamma(\by)$ on $\RRi$ can be 
defined by
\begin{equation} \nonumber
\rd \gamma(\by) 
:= \ 
\bigotimes_{j \in \NN} g(y_j) \rd (y_j).
\end{equation}
We make use of the abbreviation  
$\Ll_2(X):=L_2(\RRi,X,\gamma)$. 
	
A powerful strategy for 
approximation of functions $v \in \Ll_2(X)$ is based on  
truncations of the Hermite gpc expansion
\begin{equation} \label{series}
v(\by)=\sum_{\bs\in\FF} v_\bs \,H_\bs(\by), \quad v_\bs \in X,
\end{equation}
where
\begin{equation*}
H_\bs(\by)=\bigotimes_{j \in \NN}H_{s_j}(y_j),\quad v_\bs:=\int_{\RRi} v(\by)\,H_\bs(\by)\, \rd\gamma (\by), \quad \bs \in \FF,
\label{hermite}
\end{equation*}
with $(H_k)_{k\in \NN_0}$ being the  Hermite polynomials normalized according to
$\int_{\RR} | H_k(y)|^2\, g(y)\, \rd y= 1.$  Here recall that 
$\FF$  denotes the set of all sequences of non-negative integers $\bs=(s_j)_{j \in \NN}$ such that their support $\nu_\bs:=\{j \in \NN: s_j >0\}$ is a finite set.
Notice that $(H_\bs)_{\bs \in \FF}$ is an orthonormal basis of $L_2(\RRi,\gamma):= L_2(\RRi,\RR, \gamma)$. 
 Moreover, for every $v \in \Ll_2(X)$  represented by the 
series \eqref{series},  the Parseval's identity holds
\begin{equation} \nonumber
\|v\|_{\Ll_2(X)}^2
\ = \ \sum_{\bs\in\FF} \|v_\bs\|_X^2.
\end{equation}

For $\bs,\bs'\in \FF$, the inequality $\bs'\leq \bs$ means that $s'_j\leq s_j$ for $j\in \NN$.
A set $\Lambda\subset \FF$ is called downward closed if the inclusion $\bs\in \FF$ yields the inclusion $\bs'\in \FF$ for every  $\bs'\in \FF$
such that $\bs'\leq \bs$. A sequence $(\sigma_\bs)_{\bs\in \FF}$ is called increasing if
$\sigma_{\bs'}\leq \sigma_{\bs}$ when $\bs'\leq \bs$. 
Denote by  $(\be^j)_{j\in \NN}$  the standard basis of $\ell_2(\NN)$.

In this subsection, as the first preliminary step in deep ReLU neural network approximation, we study the approximation of $v \in \Ll_2(X)$ by truncations $S_\Lambda$ on a finite set $\Lambda \subset \FF$ of the Hermite gpc expansion \eqref{series}. In the second preliminary step, we study the approximation of $v$ by the truncations of  $S_\Lambda$ on finite-dimensional supercube. In these approximations, functions $v$ are assumed to satisfy a certain weighted $\ell_2$-summability condition as described in Assumption A$_{q,\theta}$ below.

For $\theta \ge 0$, we define the sequence
\begin{equation} \label{[p_s]}
	p_\bs(\theta) := \prod_{j \in \NN}(1 +  s_j)^\theta, \quad \bs \in \FF.
\end{equation} 

For $0 < q < \infty$ and $\theta \ge 0$, we say that $v \in \Ll_2(X)$  represented by the series \eqref{series}, satisfies Assumption A$_{q,\theta}$ (A$_{q}$ $=$ A$_{q,0}$ for short) if

\medskip
\noindent
{\bf Assumption A$_{q,\theta}$} \ 	 There exist a constant $M$ and an increasing 
sequence  $\bsigma =(\sigma_\bs)_{\bs \in \FF}$ of positive 
numbers such that   $\sigma_{\be^{i'}} \le \sigma_{\be^i}$ if $i' 
< i$ , and that
\begin{equation} \label{ell_2-summability}
\left(\sum_{\bs\in\FF} (\sigma_\bs \|v_\bs\|_{X})^2\right)^{1/2} \ \le M \ <\infty, \ \ \text{with} \ \  \big(p_\bs(\theta)\sigma_\bs^{-1}\big)_{\bs\in \FF} \in {\ell_{q}}(\FF).
\end{equation}

We would like to emphasize that the weighted $\ell_2$-summability condition \eqref{ell_2-summability} involving the function $p_\bs(\theta)$ with $\theta > 0$ play a vital role in the proofs of the main results of the present paper. For example, the proof of Theorem \ref{thm1-dnn} requires Assumption A$_{q,\theta}$ with $\theta \ge 4/q$.
Notice that if  $v$ satisfies Assumption A$_{q,\theta}$, then $v$ also  satisfies Assumption A$_{q,\theta'}$ for every $\theta' < \theta$. In what follows, we will use this property without mention.

Assume that $0 < q < \infty$ and 
$\bsigma=(\sigma_{\bs})_{\bs \in \FF}$  is  an increasing sequence of positive numbers.
For $\xi>0$, we introduce the set 
\begin{equation} \label{Lambda(xi)}
\Lambda(\xi)
:= \ 
\big\{\bs \in \FF: \, \sigma_{\bs}^{q} \le \xi \big\}.
\end{equation}
For a function $v \in \Ll_2(X)$ represented by the series \eqref{series}, we define the truncation 
\begin{equation} \label{S_{Lambda(xi)}v}
	S_{\Lambda(\xi)} v 
	:= \ 
	\sum_{\bs\in {\Lambda(\xi)}} v_\bs H_\bs.
\end{equation}

\begin{lemma}\label{lemma[L_2-approx]} 
	For every $v \in \Ll_2(X)$ satisfying Assumption A$_q$ and
	for every $\xi >1$, there holds
	\begin{equation} \nonumber
		\|v- S_{\Lambda(\xi)}v\|_{\Ll_2(X)} \leq M \xi^{- 
			1/q}.
	\end{equation}
\end{lemma}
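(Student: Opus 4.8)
The plan is to reduce everything to a tail estimate of the squared Hermite coefficients via Parseval's identity. First I would note that, since $(H_\bs)_{\bs \in \FF}$ is an orthonormal basis of $L_2(\RRi,\gamma)$ and $S_{\Lambda(\xi)} v = \sum_{\bs \in \Lambda(\xi)} v_\bs H_\bs$, the difference $v - S_{\Lambda(\xi)} v = \sum_{\bs \in \FF \setminus \Lambda(\xi)} v_\bs H_\bs$ has Hermite coefficients equal to $v_\bs$ for $\bs \notin \Lambda(\xi)$ and $0$ otherwise. Applying the Parseval identity recalled after \eqref{series} then gives
\begin{equation} \nonumber
\|v - S_{\Lambda(\xi)} v\|_{\Ll_2(X)}^2 = \sum_{\bs \in \FF \setminus \Lambda(\xi)} \|v_\bs\|_X^2 .
\end{equation}

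Next I would exploit the weight $\bsigma$ from Assumption A on the remaining index set. For each $\bs \in \FF \setminus \Lambda(\xi)$, the definition \eqref{Lambda(xi)} of $\Lambda(\xi)$ means precisely that $\sigma_\bs^q > \xi$; since all the quantities involved are positive and $q>0$, this is equivalent to $\sigma_\bs^{-2} < \xi^{-2/q}$. Writing $\|v_\bs\|_X^2 = \sigma_\bs^{-2}\brac{\sigma_\bs \|v_\bs\|_X}^2$ and inserting this uniform bound yields
\begin{equation} \nonumber
\sum_{\bs \in \FF \setminus \Lambda(\xi)} \|v_\bs\|_X^2 \le \xi^{-2/q} \sum_{\bs \in \FF \setminus \Lambda(\xi)} \brac{\sigma_\bs \|v_\bs\|_X}^2 \le \xi^{-2/q} \sum_{\bs \in \FF} \brac{\sigma_\bs \|v_\bs\|_X}^2 \le \xi^{-2/q} M^2 ,
\end{equation}
where the final inequality is exactly the summability bound of Assumption A. Taking square roots gives the claimed estimate $\|v - S_{\Lambda(\xi)} v\|_{\Ll_2(X)} \le M \xi^{-1/q}$.

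There is no serious obstacle here: the argument is essentially a one-line tail estimate once Parseval is in place, and the only point demanding care is tracking the direction of the inequalities when passing from $\sigma_\bs^q > \xi$ to $\sigma_\bs^{-2} < \xi^{-2/q}$. I would also remark that the hypothesis $\xi > 1$ is not actually needed for this bound, which holds for every $\xi > 0$, and that the finiteness and downward-closedness of $\Lambda(\xi)$ observed after \eqref{m(xi)} play no role in the $\Ll_2(X)$ estimate itself; they are required only later, so that $S_{\Lambda(\xi)} v$ may be regarded as a function of finitely many variables in the subsequent neural network construction.
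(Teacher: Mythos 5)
Your proof is correct and is essentially identical to the paper's own argument: both rest on Parseval's identity, the factorization $\|v_\bs\|_X^2 = \sigma_\bs^{-2}\brac{\sigma_\bs\|v_\bs\|_X}^2$ on the tail set where $\sigma_\bs^q > \xi$, and the summability bound of Assumption A. Your closing remarks (that $\xi>1$ is not needed for this particular estimate, and that finiteness of $\Lambda(\xi)$ matters only for the later neural network construction) are accurate side observations, not deviations from the paper's route.
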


\begin{proof}	
	Applying the Parseval's identity, 
	noting~\eqref{S_{Lambda(xi)}v},~\eqref{Lambda(xi)} and 
	Assumption A$_q$, we obtain
	\begin{equation} \nonumber
		\begin{split}
			\|v- S_{\Lambda(\xi)}\|_{\Ll_2(X)}^2 
			\ &= \
			\sum_{\sigma_{\bs}> \xi^{1/q} } \|v_\bs\|_{X}^2 
			\ = \
			\sum_{\sigma_{\bs}> \xi^{1/q} } (\sigma_{\bs}\|v_\bs\|_{X})^2 \sigma_{\bs}^{-2}
			\\[1.5ex]
			\ &\le \
			\xi^{-2/q}
			\sum_{\bs \in \FF} (\sigma_{\bs}\|v_\bs\|_{X})^2 
			\ = \ M^2 \xi^{-2/q}.
		\end{split}
	\end{equation} 
	\hfill
\end{proof}

The set $\Lambda(\xi)$ determining the truncation $S_{\Lambda(\xi)}$ plays an important role in deep ReLU neural network approximation in the Bochner space $\Ll_2(X)$. Let us present some properties of this set which will be used in the following. Observe that under Assumption A$_q$, the set $\Lambda(\xi)$ is finite and downward closed.
We define the following numbers:	
\begin{equation} \label{m_p(xi)}
m_1(\xi)
:= \ 
\max_{\bs \in \Lambda(\xi)} |\bs|_1,
\end{equation}	
and	
\begin{equation} \label{m(xi)}
	m(\xi)
	:= \ 
	\max\big\{j \in \NN:  \exists \bs \in \Lambda(\xi)\ {\rm such \ that} \ s_j > 0 \big\}.
\end{equation}
By this definition we have
	\begin{equation} \label{bigcup}
		\bigcup_{\bs \in \Lambda(\xi)}\nu_{\bs} \subset  \{1,2,\ldots, m(\xi)\},
	\end{equation}
where $\nu_{\bs}:= \supp (\bs):= \{j \in \NN: s_j >0\}$ denotes the support of $\bs$.

We put for $\big(\sigma_{\bs}^{-1}\big)_{\bs\in	\FF}\in \ell_q(\FF)$ 
\begin{align} \label{K_q}
	K_q:= 	\max\brac{1, K_q'} \ \ \text{with}  \ \
	K_q':= \sum_{\bs\in \FF} \sigma_{\bs}^{-q}, 
\end{align}
and for $\big(p_{\bs}(\theta)\sigma_{\bs}^{-1}\big)_{\bs\in 
	\FF}\in \ell_q(\FF)$
\begin{align} \label{K_{q,theta}}
	K_{q,\theta}:=	\max\brac{1, 	K_{q,\theta}'} \ \ \text{with}  \ \
	K_{q,\theta}':= \left(\sum_{\bs\in \FF} p_{\bs}(\theta)^q \sigma_{\bs}^{-q}\right)^{\frac{1}{ \theta q}} . 
\end{align}

\begin{lemma}\label{lemma: |s|_1, |s|_0}
	Let  $\xi \ge 1$ and $(\sigma_{\bs})_{\bs \in \FF}$  be a sequence of positive numbers. Then we have the following.
	\begin{itemize}
		\item[{\rm (i)}]	
		Assume that  $\big(\sigma_{\bs}^{-1}\big)_{\bs\in	\FF}\in \ell_q(\FF)$. The set $\Lambda(\xi)$ is finite and it holds
		\begin{align*}
			|\Lambda(\xi)|
			\le K_q \xi.
		\end{align*}
		\item[{\rm (ii)}]
		Assume that  $\big(p_{\bs}(\theta)\sigma_{\bs}^{-1}\big)_{\bs\in 
			\FF}\in \ell_q(\FF)$ for some $\theta > 0$. There holds
		\begin{align*} 
			m_1(\xi)
			\le K_{q,\theta} \xi^{\frac{1}{\theta q}}.
		\end{align*}
	\end{itemize}
\end{lemma}

\begin{proof}
	Notice that
	$1 \le \sigma_{\bs}^{-q} \xi$  for every $\bs \in \Lambda(\xi)$.
	This 
	implies (i):
	\begin{align}
		|\Lambda(\xi)|
		\ = \		
		\sum_{\bs\in \Lambda(\xi)} 1
		\le 
		\sum_{\bs\in \Lambda(\xi)}	\xi \sigma_{\bs}^{-q}
		\le K_q \xi
		\notag
	\end{align}
	Moreover, we have that $1 \le s_j $ for every $j \in \nu_\bs$. Hence, we derive the inequalities
	\begin{align}
		\max_{\boldsymbol{s}\in \Lambda(\xi)} |\bs|_1^{\theta q} 
		\le 
		\sum_{\bs\in \Lambda(\xi)}
		\brac{\prod_{j \in \nu_\bs} 
			(1 + s_j)}^{\theta q} \leq  \sum_{\bs\in \Lambda(\xi)} 	
		p_\bs(\theta)^{q}\xi \sigma_{\bs}^{-q}
		\le K_{q,\theta}^{\theta q} \xi
		\notag
	\end{align}
	which  prove (ii).
	\hfill
\end{proof}

\begin{lemma}\label{lemma: m(xi)}
	Let $0 < q < \infty$ and 
	$(\sigma_{\bs})_{\bs \in \FF}$  be an increasing 
	sequence of positive numbers. 
	Assume that  $\big(\sigma_{\bs}^{-1}\big)_{\bs\in	
		\FF}\in \ell_q(\FF)$ and $\sigma_{\be^{i'}} \le 
	\sigma_{\be^i}$ if $i' < i$ . Then there holds for $\xi \ge 1$
	\begin{align} \label{m(xi)<}
		m(\xi)
		\le K_q \xi. 
	\end{align} 
\end{lemma}

\begin{proof}The statement is obvious if  $\Lambda(\xi)=\emptyset$. Therefore we assume that $\Lambda(\xi)\not=\emptyset$. Noting~\eqref{m(xi)}, there is a $\bs  
	\in \Lambda(\xi)$ 
	such that $s_{m(\xi)} \geq 1$. Then we have  
	$\be^{m(\xi)}  \le \bs$. Here recall that $\be^{m(\xi)}$ is the $m(\xi)$th vector in the standard basis of $\ell_2(\NN)$. Since $(\sigma_{\bs})_{\bs \in \FF}$  be  an increasing 
		sequence, we have $\Lambda(\xi)$ is 
	downward closed and therefore   $\be^{m(\xi)}  \in 
	\Lambda(\xi)$. From the definition~\eqref{Lambda(xi)} of 
	$\Lambda(\xi)$ and 
	the assumption in the lemma, we obtain
	\[
	\sigma_{\be^1}^q
	\le
	\sigma_{\be^2}^q
	\le 
	\ldots 
	\le 
	\sigma_{\be^{m(\xi)}}^q
	\le 
	\xi.
	\]
	Thus,
	$\be^1,\ldots, \be^{m(\xi)}$ belong to $\Lambda(\xi)$. 
	This yields the inequality $|\Lambda(\xi)| \ge {m(\xi)}$
	which together with the inequality $|\Lambda(\xi)| 
	\le K_q \xi$ in Lemma \ref{lemma: |s|_1, |s|_0}(i) proves 
	\eqref{m(xi)<}.
	\hfill 
\end{proof}

 Notice that if Assumption A$_q$ holds, then $m(\xi)$ is finite and, therefore, the truncation $S_{\Lambda(\xi)} v $ of the series \eqref{series} can be seen as a function on $\RR^{m(\xi)}$.  In this section, for $\xi >1$, we use the letters 
	$\omega$ and $m$ only for the notation
	\begin{align} \label{m,omega}
	\omega := \lfloor {K_{q,\theta}}\xi \rfloor, \ \ m:= m(\xi). 
	\end{align}

Let us  introduce the truncation $S_{\Lambda(\xi)}^{\omega}v$ of $S_{\Lambda(\xi)}v$ on the supercube 
	$$
	B^m_\omega
:=
[-2\sqrt{\omega}, 2\sqrt{\omega}]^m \subset \RR^m.	
	$$
	 
For a function $\varphi$ defined on $\RR$, 
we denote by $\varphi^{\omega}$ the truncation of $\varphi$ on $B^1_\omega$, i.e.,
\begin{align*}
	\varphi^{\omega}(y)
	:=
	\begin{cases}
		\varphi(y)
		&
		\text{if }
		y \in B^1_\omega
		\\
		0
		&
		\text{otherwise}.
		\label{truncation}
	\end{cases}
\end{align*}
If $\nu_{\bs} \subset \{1,\ldots,m\}$, we 
put
$$
H_\bs^{\omega}(\boldsymbol{y}) := \prod_{j=1}^m H_{s_j}^{\omega}(y_j),\qquad \by\in \RR^m. 
$$
We have
$
H_\bs^{\omega}(\boldsymbol{y})
=
\prod_{j=1}^m H_{s_j}(y_j)
$
if 
$
\boldsymbol{y}\in B^m_\omega $ and $H_\bs^{\omega}(\boldsymbol{y})=0$ otherwise.

For a function $v \in \Ll_2(X)$ represented by the 
series \eqref{series}, noting the truncation 
$S_{\Lambda(\xi)} v $ given by \eqref{S_{Lambda(xi)}v} and 
\eqref{Lambda(xi)},
we define
\begin{equation} \label{S_{Lambda(xi)}^omega}
	S_{\Lambda(\xi)}^\omega v 
	:= \ 
	\sum_{\bs\in {\Lambda(\xi)}} v_\bs H_\bs^\omega.
\end{equation}
Below in this subsection, we use letters $C$ and $K$ to denote various constants which may depend on 
the parameters $M,q,\bsigma,\theta,$ as mentioned in Theorem \ref{thm1-dnn}.

In what follows, for convenience we consider $\RR^m$ as  the subset of all $\by \in \RRi$ such that $y_j = 0$ for $j = m+1,\ldots$.  If $f$ is a function on $\RRm$ taking values in a Hilbert space $X$, then $f$ has an extension to $\RR^{m'}$ for $m' > m$ or to $\RRi$ which is denoted again by $f$, by the formula 
	$f(\by)= f \brac{(y_j)_{j=0}^m}$ for $\by = (y_j)_{j =1}^{m'}$ or $\by = (y_j)_{j \in \NN}$, respectively. 
	
	The tensor product of standard Gaussian probability measures $\gamma(\by)$ on  $\RR^m$ is defined by
	\begin{equation} \nonumber
		\rd \gamma(\by) 
		:= \ 
		\bigotimes_{j = 1}^m g(y_j) \rd (y_j).
	\end{equation}
For a $\gamma$-measurable subset $\Omega$ in $\RRm$, the spaces  $L_2(\Omega,X,\gamma)$ and $L_2(\Omega,\gamma)$ are defined in the usual way. 

Under the assumptions of Lemma 
	\ref{lemma: m(xi)},  $S_{\Lambda(\xi)} v $  can be 
	seen as a function on $\RRm$, where we recall that $m := m(\xi)$. The function $S_{\Lambda(\xi)}^{\omega}v$ can be treated as the truncation of 
	$S_{\Lambda(\xi)} v $  on the supercube
{$ B^m_\omega $}.
For $g \in L_2(\RRm,X,\gamma)$, we have $\norm{g}{ L_2(\RRm,X,\gamma)} = \norm{g}{ L_2(\RRi,X,\gamma)}$ in the sense of extension of $g$. We will make use of these facts  without mention.
 
We are interested in the estimating in terms of parameter $\xi$ the error of the  approximation of $v \in \Ll_2(X)$ by $S_{\Lambda(\xi)}^{\omega}v$.  To this end, we need some bound for the  $L_2(\RR^m{\setminus}B^m_\omega,\gamma)$ norm of polynomials on $\RRm$.
\begin{lemma}\label{l:Rm trun1}
	Let $\varphi (\by)= \prod_{j = 1}^m \varphi_j(y_j)$ 
	for  $\by \in \RR^m$, where $\varphi_j$ 
	is a polynomial in the variable $y_j$
	of degree not greater than $\omega$ for 
	$j=1,\ldots,m$. 
	Then there holds 
	\begin{align*} 
		\norm{\varphi}{L_2(\RR^m{\setminus}B^m_\omega,\gamma)}
		\le 
		Cm
		\exp \left(- K\omega \right)
		\norm{\varphi}{L_2(\RR^m,\gamma)},
	\end{align*}
	where the positive constants $C$ and $K$ are independent of $\omega$, $m$ and $\varphi$.
\end{lemma}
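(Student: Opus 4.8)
The plan is to exploit the tensor-product structure of $\varphi$ and of the Gaussian measure $\gamma$ to reduce the multivariate tail bound to a one-dimensional estimate, and then to prove the latter by a Gaussian weight-splitting (tilting) argument followed by a rescaling. First I would record the factorization $\norm{\varphi}{L_2(\RRm,\gamma)}^2=\prod_{j=1}^m\norm{\varphi_j}{L_2(\RR,\gamma)}^2$, which holds because $\gamma$ is a product measure and $\varphi$ is a product of univariate factors. Since $\RRm\setminus B^m_\omega=\bigcup_{j=1}^m\{\by:|y_j|>2\sqrt\omega\}$, a union bound together with the same factorization gives
\[
\norm{\varphi}{L_2(\RRm\setminus B^m_\omega,\gamma)}^2\le\sum_{j=1}^m\brac{\int_{|y|>2\sqrt\omega}\varphi_j(y)^2\,\rd\gamma(y)}\prod_{i\ne j}\norm{\varphi_i}{L_2(\RR,\gamma)}^2.
\]
Hence it suffices to establish a one-dimensional bound $\int_{|y|>2\sqrt\omega}p(y)^2\,\rd\gamma(y)\le C^2\exp(-2K\omega)\norm{p}{L_2(\RR,\gamma)}^2$ for every univariate polynomial $p$ of degree at most $\omega$; dividing by $\norm{\varphi}{L_2(\RRm,\gamma)}^2$ and taking square roots then yields the assertion (in fact with the sharper factor $\sqrt m\le m$).

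To prove the one-dimensional estimate I would fix a parameter $t\in(0,1)$ and split the Gaussian density on the tail as $e^{-y^2/2}=e^{-(1-t)y^2/2}e^{-ty^2/2}\le e^{-2(1-t)\omega}e^{-ty^2/2}$, valid for $|y|>2\sqrt\omega$. This yields
\[
\int_{|y|>2\sqrt\omega}p(y)^2\,\rd\gamma(y)\le e^{-2(1-t)\omega}\,\frac{1}{\sqrt{2\pi}}\int_{\RR}p(y)^2e^{-ty^2/2}\,\rd y.
\]
The substitution $z=\sqrt t\,y$ turns the last integral into $t^{-1/2}\norm{p(\cdot/\sqrt t)}{L_2(\RR,\gamma)}^2$, the squared $L_2(\gamma)$-norm of the dilated polynomial. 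Expanding $p$ in the normalized Hermite basis and invoking the explicit dilation identity for Hermite polynomials, I would bound this dilated norm by $t^{-1/2}$ times a factor of the form $D(t)^{\deg p}$ with $D(t)\to1$ as $t\to1$. Collecting the estimates,
\[
\int_{|y|>2\sqrt\omega}p(y)^2\,\rd\gamma(y)\le C\,t^{-1/2}\exp\brac{-2(1-t)\omega+(\deg p)\log D(t)}\norm{p}{L_2(\RR,\gamma)}^2.
\]

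Finally I would choose $t\in(0,1)$ close enough to $1$ that the tail gain $-2(1-t)\omega$ dominates the degree-induced growth $(\deg p)\log D(t)$, which is possible because $\deg p\le\omega$ while $\log D(t)\to0$ as $t\to1$; this fixes a rate $K=K(t)>0$ and a constant $C$ independent of $\omega$, $m$ and $\varphi$. Feeding the resulting one-dimensional bound back into the first display completes the argument.

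I expect the main obstacle to be the quantitative control of the dilated norm $\norm{p(\cdot/\sqrt t)}{L_2(\RR,\gamma)}$ in terms of $\deg p$ and $t$, since the exponential tail gain must strictly beat its growth. This balance is genuinely delicate: for a polynomial whose degree is comparable to $\omega$ the threshold $2\sqrt\omega$ sits essentially at the turning-point scale $2\sqrt{\deg p}$, where the Gaussian tail mass is only polynomially small in $\omega$, so the exponential rate $e^{-K\omega}$ really relies on the degrees that actually occur being small compared with $\omega$ — precisely the regime in which the lemma is applied, where the polynomials are the truncated Hermite products $H_{\bs}$ with $\bs\in\Lambda(\xi)$ and $\omega$ is of order $\xi$.
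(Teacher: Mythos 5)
Your reduction of the multivariate bound to a univariate one is exactly the paper's own first step: the identical slab decomposition $\RR^m\setminus B^m_\omega=\bigcup_{j=1}^m I_j$ and tensor factorization (you use a union bound on squares and get $\sqrt m$, the paper uses the triangle inequality and gets $m$; both are fine). The divergence is in the univariate estimate: the paper disposes of it by citing the infinite--finite range inequality \cite[Theorem 6.3]{Lu07}, while you attempt a self-contained proof by Gaussian tilting plus a Hermite dilation estimate.

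The obstacle you flag at the end is not a removable defect of your method: the univariate claim, and hence the lemma exactly as stated, is false once degree $\omega$ is allowed. Take $m=1$ and $\varphi_1=H_\omega$, the $L_2(\RR,\gamma)$-normalized Hermite polynomial of degree $\omega$. The cut $2\sqrt\omega$ is precisely the turning-point (Mhaskar--Rakhmanov--Saff) scale for degree $\omega$ under the Gaussian weight, and Plancherel--Rotach (Airy) asymptotics give $\norm{H_\omega}{L_2(\RR\setminus B^1_\omega,\gamma)}^2\asymp\omega^{-1/3}$: only polynomially small, so no constants $C,K$ can make the claimed bound $C\exp(-K\omega)$ true. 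This is exactly where your balance degenerates: since $\ln D(t)=(1-t)(1+o(1))$ as $t\to1$, the exponent $-2(1-t)\omega+2(\deg p)\ln D(t)$ behaves like $-2(1-t)(\omega-\deg p)$ and vanishes as $\deg p\to\omega$. You should also be aware that the paper's own proof breaks at the same place: in \eqref{[norm-ineq]} it invokes the cited theorem with $a_\ell=\sqrt\ell$, which is the MRS number attached to the norm $\int |P(t)|^2 e^{-2t^2}\,\rd t$ (weight $e^{-t^2}$ multiplying the polynomial inside the $L_2$ norm), whereas the quantity actually being estimated is $\int|\psi(\sqrt 2\,t)|^2e^{-t^2}\,\rd t$, whose MRS number is $\sqrt{2\ell}$; with the correct value, the cut in \eqref{[norm-ineq]} sits at $(1+\kappa)a_\ell$ with $\kappa=0$, where that theorem yields no exponential gain.

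What survives --- and this matches your closing remark --- is the statement with a degree restriction, and that is all the paper ever uses. If ``degree not greater than $\omega$'' is replaced by ``degree not greater than $c\omega$'' for a fixed $c<1$, your tilting argument closes (choose $t$ near $1$; the exponent is then at most $-2(1-t)(1-c)\omega$ up to lower-order terms), and so does a corrected application of the range inequality, viewing the $\varphi_j$ as polynomials of degree at most $\lfloor c\omega\rfloor$ so that the cut $2\sqrt\omega$ equals $(1+\kappa)a_{\lfloor c\omega\rfloor}$ with $\kappa=c^{-1/2}-1>0$. In the paper's applications this restriction holds with a huge margin: in Lemma~\ref{lemma:S-S^omega} the polynomials are $H_\bs$, $\bs\in\Lambda(\xi)$, with coordinate degrees at most $m_1(\xi)\le K_{q,\theta}\xi^{1/\theta q}\le K_{q,\theta}\xi^{1/4}$ while $\omega\asymp\xi$, and in Lemma~\ref{lemma:G_Lambda} the polynomial is constant. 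So, completed quantitatively, your proposal is not merely an alternative route; it is a repair of the lemma (and thereby of the proof of Theorem~\ref{thm1-dnn}). But as a proof of the lemma as literally stated it cannot be completed, because that statement is false.
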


\begin{proof}
	The proof of the lemma relies on the following inequality which is an immediate consequence of  \cite[Theorem 6.3]{Lu07}.
	Let $\psi$ be a polynomial of degree 
	at most $\ell$.  Applying \cite[Theorem 6.3]{Lu07} for polynomial ${\psi(\sqrt{2}t)}$ with weight $\exp(-t^2)$ (in this case $a_{\ell}=\sqrt{\ell}$, see \cite[Page 41]{Lu07}) and $\kappa=\sqrt{2}-1$ we get
	\begin{equation} \nonumber
		\|\psi \|_{L_2( \RR \setminus [-2\sqrt{{\ell}},2\sqrt{{\ell}}\,], \gamma)} 
		\ \le \
		C\, 	\exp(- K {\ell}) \|\psi \|_{L_2([-\sqrt{{2\ell}},{\sqrt{{2\ell}}}\,], \gamma)}
	\end{equation}
	for some positive numbers $C$ and $K$ independent of $\ell$ and $\psi$. Hence, for a polynomial $\psi$  of degree not greater than $\omega$ we obtain
	\begin{equation} \label{[norm-ineq]}
		\norm{\psi}{L_2(\RR\setminus B^1_{\omega},\gamma)}
			\ \le \
		C\, 	\exp(- K {\omega}) \|\psi \|_{L_2([-\sqrt{{2\omega}},{\sqrt{{2\omega}}}\,], \gamma)}
		\ \le \
		C \exp \left(- K\omega \right) \norm{\psi}{L_2(\RR,\gamma)}.
	\end{equation}
	
	We denote 
	\begin{equation*} \label{I_j}
		I_j
		:=
		\big\{\by =(y_i)_{i=1}^m\in \RR^m: |y_j|> 2\sqrt{\omega}\big\}, \ \ j = 1,..., m.
	\end{equation*}	
		Since 
		$$
		\RR^m \setminus B^m_\omega = \bigcup_{j=1}^m I_j,
		$$
 we have
	\begin{align}
		\norm{\varphi}{L_2(\RR^m{\setminus}B^m_\omega,\gamma)}
		\ \le \
		\sum_{j=1}^m \norm{\varphi}{L_2(I_j,\gamma)}
		\ = \ 
		\sum_{j=1}^m \bigg( \norm{\varphi_j}{L_2(\RR\setminus B^1_{\omega},\gamma)}\prod_{i\not=j}\norm{\varphi_i}{
			L_2(\RR ,\gamma)}\bigg).
		\label{RmIj1} 
	\end{align}
	Applying~\eqref{[norm-ineq]} for the polynomials $\varphi_j$,  for $j=1,\ldots,m$, whose degree is not greater than $\omega$ we obtain
	\begin{align}
		\norm{\varphi_j}{L_2(\RR\setminus B^1_{\omega},\gamma)}
		\ &\le \
		C \exp \left(- K\omega \right) \norm{\varphi_j}{L_2(\RR,\gamma)}
		\notag
	\end{align}
	with some positive constants $C$ and $K$ independent of $\omega$, $m$ and $\varphi$.
	This together with~\eqref{RmIj1} yields
	\begin{align}
		\norm{\varphi}{L_2(\RR^m{\setminus}B^m_\omega,\gamma)}
		\ \le \
		C \exp \left(- K\omega \right) m \prod_{i=1}^m\norm{\varphi_i}{L_2(\RR,\gamma)}
		\ = \
		C m \exp \left(- K\omega \right) \norm{\varphi}{L_2(\RR^m,\gamma)}.
		\notag
	\end{align}
	\hfill
\end{proof}

\begin{lemma}\label{lemma:v-S^omega} 
	Let $0 < q  < \infty$ and $\theta\geq 1/q$.
	For every $v \in \Ll_2(X)$ satisfying Assumption A$_{q,\theta}$ and
for every $\xi > 1$, there holds
\begin{equation} \label{S-S^omega}
	\|v- S_{\Lambda(\xi)}^{\omega} v\|_{\Ll_2(X)} \leq C \xi^{- 1/q},
\end{equation}
	where the positive constant $C$  is independent of $v$ and $\xi$.
	
\end{lemma}

\begin{proof} Since $S_{\Lambda(\xi)}v $ and $S_{\Lambda(\xi)}^{\omega} v$ can be considered as functions on $\RRm$, we have
		\begin{equation}\nonumber 
		\|S_{\Lambda(\xi)}v - S_{\Lambda(\xi)}^{\omega}v\|_{\Ll_2(X)}
		= \|S_{\Lambda(\xi)}v - S_{\Lambda(\xi)}^{\omega}v\|_{{L_2(\RRm \setminus B^m_\omega,X,\gamma)}},
	\end{equation}
and consequently,
	\begin{equation}\label{}
	\|v- S_{\Lambda(\xi)}^{\omega} v\|_{\Ll_2(X)} 
	\leq \|v-S_{\Lambda(\xi)}v  \|_{\Ll_2(X)}
	+ \|S_{\Lambda(\xi)}v - S_{\Lambda(\xi)}^{\omega}v\|_{{L_2(\RRm \setminus B^m_\omega,X,\gamma)}}.
\end{equation}
From Lemma \ref{lemma[L_2-approx]}, it is sufficient to show  that the second term in the right-hand side is bounded by $C \xi^{- 1/q}$. 
	By  the equality 
	$$
	\ \norm{H_{\bs} - H_{\bs}^{\omega}}{L_2(\RR^m,\gamma)}
	\ = \ \norm{H_{\bs}}{L_2(\RR^m\setminus B^m_\omega,\gamma)},
	\quad \bs\in \Lambda(\xi),
	$$
	and the triangle inequality  we obtain 
	\begin{align}
		\|S_{\Lambda(\xi)}v - S_{\Lambda(\xi)}^{\omega}v\|_{{L_2(\RRm \setminus B^m_\omega,X,\gamma)}}
		& 
		\leq 
		\sum_{\bs\in \Lambda(\xi)} 
		\norm{v_{\bs}}{X}
		\norm{H_{\bs} - H_{\bs}^{\omega}}{L_2(\RR^m,\gamma)}
		\notag
		\\
		&
		=
		\sum_{\bs\in \Lambda(\xi)} 
		\norm{v_{\bs}}{X}
		\norm{H_{\bs}}{L_2(\RR^m\setminus B^m_\omega,\gamma)}.
		\label{|S - S^omega|}
	\end{align}
	Notice that for every $\bs\in \Lambda(\xi)$, 
	$$
	H_\bs(\by)= \prod_{j = 1}^m H_{s_j}(y_j), \ \ \by \in \RR^m,
	$$
	 where $H_{s_j}$ is a polynomial in variable $y_j$, 
	of degree not greater than $m_1(\xi)$.  By 
		Lemma~\ref{lemma: |s|_1, |s|_0}(ii),  \eqref{m,omega} and the assumption $\theta\geq 1/q$ we have that
			 $$
			 m_1(\xi)\leq \lfloor K_{q,\theta} \xi^{\frac{1}{\theta q}}\rfloor\leq\lfloor K_{q,\theta} \xi \rfloor=  \omega.
			 $$ 
	Applying Lemma~\ref{l:Rm trun1} gives
	\begin{align}
		\norm{H_\bs}{L_2(\RR^m{\setminus}B^m_\omega,\gamma)}
		\le 
		Cm
		\exp \left(- K\omega \right)
		\norm{H_\bs}{L_2(\RR^m,\gamma)}
		= 
		Cm
		\exp \left(- K\omega \right),
		\notag
	\end{align}
	where the positive constants $C$ and $K$ are independent of $\omega$, $m$ and $\bs$.
	This together with \eqref{|S - S^omega|} and the Cauchy--Schwarz  inequality yields that
	\begin{equation}\label{S_Lambda - S_Lambda^omega}
		\begin{aligned}  
			\|S_{\Lambda(\xi)}v - S_{\Lambda(\xi)}^{\omega}v\|_{{L_2(\RRm \setminus B^m_\omega,X,\gamma)}}
			&
			\leq 
			C m
			\exp 
			\brac{- K\omega} 
			\sum_{\bs\in \Lambda(\xi)} 
			\norm{v_{\bs}}{X}
			\\
			&
			\leq 
			C m 
			\exp \brac{- K\omega} \sup_{\bs\in\FF}  \sigma_\bs^{-1}   \sum_{\bs\in \Lambda(\xi)}\sigma_{\bs} \norm{v_{\bs}}{X}
			\\
			&
			\leq 
			C m 
			\exp \brac{- K\omega} {|\Lambda(\xi)|^{1/2}} \bigg( \sum_{\bs\in \Lambda(\xi)}(\sigma_{\bs} \norm{v_{\bs}}{X})^2\bigg)^{1/2} .
		\end{aligned}
	\end{equation}
Using  \eqref{m,omega}, Assumption A$_{q,\theta}$ and Lemmata \ref{lemma: |s|_1, |s|_0} and \ref{lemma: m(xi)} we finally obtain
	\begin{equation} 
\begin{aligned}  
\|S_{\Lambda(\xi)}v - S_{\Lambda(\xi)}^{\omega}v\|_{{L_2(\RRm \setminus B^m_\omega,X,\gamma)}}
\leq 
C  
\xi^{3/2} \exp \brac{- K \xi}
\le 
C\xi^{-1/q}.
\end{aligned}
\end{equation}
This is the bound of the second term and therefore the stated result is proved.
	\hfill
\end{proof}

\subsection{Approximation by deep ReLU neural networks}

In this subsection, we construct deep ReLU neural networks which can be used to approximate functions in $\Ll_2(X)$, and prove  bounds of the error of  the approximation by them. 
%

The main result in this subsection is read as follows.

\begin{theorem}\label{thm1-dnn}
	Let  $v \in \Ll_2(X)$ satisfy Assumption A$_{q,\theta}$ with
	$\theta \ge 4/q$. 
Then for every $\xi > 2$, we can  construct  a deep 
ReLU neural network $\bphi_{\Lambda(\xi)}:= (\phi_\bs)_{\bs 
\in \Lambda(\xi)}$ on 
$\RR^m$,  $m\leq \lfloor K_q \xi\rfloor$, having the following properties. 
\begin{itemize}
	\item [{\rm (i)}]
The deep ReLU neural network $\bphi_{\Lambda(\xi)}$ is independent of $v$;
	
\item [{\rm (ii)}] 
The input and output dimensions of $\bphi_{\Lambda(\xi)}$ are at most $m$;

\item [{\rm (iii)}]
The  components $\phi_{\bs}$, $\bs\in \Lambda(\xi)$, of $\bphi_{\Lambda(\xi)}$ are deep ReLU neural networks on 
$\RR^{|\nu_\bs|}$ with ${|\nu_\bs|}  \le K_{q,\theta} \xi^{\frac{1}{\theta q}}$, having support contained in the super-cube $[-T,T]^{|\nu_\bs|}$, 
where  $T:= 4\sqrt{\lfloor K_{q,\theta} \xi \rfloor}$;
	
\item [{\rm (iv)}] 
$W\big(\bphi_{\Lambda(\xi)}\big) \le C\xi \log \xi$;

\item [{\rm (v)}] 
$L\big(\bphi_{\Lambda(\xi)}\big) \le C \xi^{1/\theta q}$;

\item [{\rm (vi)}] 	If $\phi_\bs$ is extended to the whole $\RRi$ by $\phi_\bs(\by) =\phi_\bs\brac{\big(y_j\big)_{j\in \nu_\bs}}$ for 
$\by = \big(y_j\big)_{j\in \NN} \in \RRi$, the approximation of $v$ by  the function
\begin{equation} \label{Phi_v}
	\Phi_{\Lambda(\xi)}v (\by) := \sum_{\bs \in \Lambda(\xi)} v_\bs \phi_\bs(\by)
\end{equation}
gives the error estimate 
\begin{align} \label{approximation-error}
\| v- \Phi_{\Lambda(\xi)}v  \|_{\Ll_2(X)}\leq C\xi^{-1/q}.
\end{align}
\end{itemize}
Here the positive constants $C = C_{M,q,\bsigma,\theta}$ are independent of $v$ and $\xi$.
\end{theorem}

\begin{proof}  
For convenience we brief the proof into several steps. Recall that throughout the present paper, we use letters $C$  and $K$ to denote general positive constants which may take different values.
\\
\\ 
{\bf Step 1.} In this step, we construct a deep ReLU neural network  $\bphi_{\Lambda(\xi)}:= (\phi_\bs)_{\bs \in \Lambda(\xi)}$ on $\RRm$ and prove the claims (i)--(iii) in Theorem \ref{thm1-dnn}. 

We preliminarly notice the following. Suppose that $\bphi_{\Lambda(\xi)}$ and therefore, the function $\Phi_{\Lambda(\xi)}$ are already constructed. Due to the inequality
\begin{equation}\label{eq-decompose}
\begin{aligned}
\|v- \Phi_{\Lambda(\xi)} v\|_{\Ll_2(X)}
& \leq \|v-S_{\Lambda(\xi)}^{\omega}v  \|_{\Ll_2(X)}
\\
& 
+ \| S_{\Lambda(\xi)}^{\omega}v- \Phi_{\Lambda(\xi)} v\|_{L_2(B^m_\omega,X,\gamma)}
+ \| \Phi_{\Lambda(\xi)} v  \|_{L_2(\RRm \setminus B^m_\omega,X,\gamma)},
\end{aligned}
\end{equation}
the claim (vi) will be proven if we show the bound  $C\xi^{-1/q}$ for the three terms in the right-hand side. This bound has been shown for  first term   as in Lemma \ref{lemma:v-S^omega}. This tell us that we should construct 
$\bphi_{\Lambda(\xi)}$ on $\RRm$ for approximating $S_{\Lambda(\xi)}^{\omega}v$ by $\Phi_{\Lambda(\xi)}v$, and prove that the second and  third terms in \eqref{eq-decompose} are bounded by $C\xi^{-1/q}$. 

Let us begin to construct $\bphi_{\Lambda(\xi)}$.  Since the function $S_{\Lambda(\xi)}^{\omega}v$ is a linear combination of the truncated Hermite polynomials $H_\bs^\omega$, $\bs \in \Lambda(\xi)$,  on the supercube $B^m_\omega \subset \RRm$, we will use the construction in Lemma \ref{lem-product-varphi} to design a deep ReLU neural network $\phi_\bs$ for approximating each $H_\bs^\omega$ with an appropriate accuracy.  Then the deep ReLU neural network $\bphi_{\Lambda(\xi)}$ formed by parallelization as described in Lemma~\ref{lem:parallel} will be desired.

It is well-known  that for each $s\in \NN_0$, the univariate Hermite polynomial $H_s$
can be written as 
\begin{align}\label{eq-Hs}
	H_s(x)
	&
	=
\sqrt{s!} \sum_{\ell=0}^{\left\lfloor \tfrac{s}{2} 
	\right\rfloor} \frac{(-1)^\ell}{\ell!(s - 2\ell)!} \frac{x^{s - 
		2\ell}}{2^\ell}:=	\sum_{\ell=0}^s a_{s,\ell} x^\ell.
\end{align} 
Then for each  $\bs\in \Lambda(\xi)$  we have 
\begin{align} \label{eq-Hbs}
	H_{\bs}(\by)
	=
	\prod_{j=1}^m 
	H_{s_j}(y_j)
	=
	\sum_{\bell=\boldsymbol{0}}^{\bs} 
	\brac{
		\prod_{j=1}^m 
		a_{s_j,\ell_j}}
	\by^{\bell} = \sum_{\bell=\boldsymbol{0}}^{\bs}  a_{\bell}
	\by^{\bell},
\end{align}
where we put 
$a_{\bell}:= \prod_{j=1}^m a_{s_j,\ell_j}$ and 
$\by^{\bell}:= \prod_{j=1}^m y_j^{\ell_j}$.

From \eqref{S_{Lambda(xi)}^omega} and  \eqref{eq-Hbs} we get for every $\by \in B^m_\omega$,
\begin{align}
	S_{\Lambda(\xi)}^{\omega}v(\by)
	=
	\sum_{\bs\in \Lambda(\xi)} 
	v_{\bs} 
	\sum_{\bell=\boldsymbol{0}}^{\bs} 
		a_{\bell}
		\brac{\by^{\bell}}^{\omega}
=
\sum_{\bs\in \Lambda(\xi)} 
v_{\bs}
\sum_{\bell=\boldsymbol{0}}^{\bs} 
a_{\bell}
\brac{2\sqrt{\omega}}^{|\bell|_1}
\prod_{j \in \nu_\bell}\brac{\frac{y_j}{2\sqrt{\omega}}}^{\ell_j}.
\label{Slambda3}
\end{align}
We will use this form of representation of  $S_{\Lambda(\xi)}^{\omega}v$ for constructing  $\bphi_{\Lambda(\xi)}$ and $\Phi_{\Lambda(\xi)}$.

Let $\bell \in \FF$ be such that  
$\boldsymbol{0} \le \bell  \le \bs$. By definition we have $\nu_\bell \subset \nu_\bs $. 
By changing variables 
$$
\bx= \frac{\by}{2\sqrt{\omega}}, \ \ \bx \in \nu_\bs,
$$ 
we have
\begin{equation} 
	\prod_{j \in {\nu_\bell}} \left(\frac{y_j}{2\sqrt{\omega}}\right)^{\ell_j}
	= 
	\prod_{j \in \nu_{\bell}} 
	\varphi_1^{\ell_j}\brac{\frac{y_j}{2\sqrt{\omega}}} \prod_{j \in \nu_\bs \setminus \nu_{\bell}} 
	\varphi_0\brac{\frac{y_j}{2\sqrt{\omega}}}=h_{\bs,\bell}(\bx), 
	\ \ \by \in B^{|\nu_\bs|}_\omega,
\end{equation}
where
\begin{equation} \label{varphi=}
	h_{\bs,\bell}(\bx) 
	:=
	\prod_{j \in \nu_\bell} \varphi_1^{\ell_j}(x_j)
	\prod_{j \in \nu_\bs\setminus\nu_\bell}
	\varphi_0\brac{x_j}, 
\end{equation}
$\varphi_0$ and $\varphi_1$ are the piece-wise linear functions defined before Lemma~\ref{lem-product-varphi}.
Hence by Lemma \ref{lem-product-varphi}, for every  $\boldsymbol{0} \le \bell  \le \bs$,
with 
\begin{align}\label{eq-Absbell}
\delta_\bs^{-1}
:= 
\xi^{1/q+1/2} p_\bs(1)
\brac{2\sqrt{\omega}}^{|\bs|_1}
\displaystyle
\max_{\boldsymbol{0}\leq \bell\le \bs}\{|a_{\bell}|
\},
\end{align}
there exists a deep  ReLU neural network
$\phi_{\bs,\bell}$ on $\RR^{|\nu_\bs|}$
 with 
$\supp \big(\phi_{\bs,\bell}\big) \subset [-2,2]^{|\nu_\bs|}$ such that 
\begin{align}\label{k-01} 	
\sup_{\by \in B^{|\nu_\bs|}_{4\omega}}\abs{
	h_{\bs,\bell}\brac{ \frac{\by}{2\sqrt{\omega}}} -	\phi_{\bs ,\bell}\brac{ \frac{\by}{2\sqrt{\omega}}}}
\ \le \
\delta_\bs,
\end{align}
and therefore,
\begin{align}\label{k-011} 	
\sup_{\by \in B^{|\nu_\bs|}_{\omega}}\abs{\prod_{j \in \nu_\bs}  
	\brac{\frac{y_j}{2\sqrt{\omega}}}^{\ell_j} 	-	\phi_{\bs,\bell}\brac{ \frac{\by}{2\sqrt{\omega}}}}
	\ \le \
	\delta_\bs,
\end{align}
and
\begin{align}\label{supp(g^ell_s)} 
\supp\bigg(\phi_{\bs,\bell}\bigg(\frac{\cdot}{2\sqrt{\omega}}\bigg)\bigg) \subset B^{|\nu_\bs|}_{4\omega}.
\end{align}

Considering   the right-hand side of  \eqref{eq-Absbell}, we have $\omega \ge \xi  >1$ and therefore,
	\begin{align}\label{eq-Absbell-2}
		\brac{2\sqrt{\omega}}^{|\bs|_1}
		\displaystyle
		\max_{\boldsymbol{0}\leq \bell\le \bs}\{|a_{\bell}|\}
		\ge 
		2^{|\bs|_1} |a_{\bs}|
		\ge 1.
	\end{align}
The last inequality can be proven by using  \eqref{eq-Hs}, \eqref{eq-Hbs} and Stirling's approximation.
With the definition \eqref{eq-Absbell}, this yields that  
$$
|\bell|_1 + |\nu_\bs \setminus \nu_\bell| \leq |\bs|_1 \leq  p_\bs(1) \leq\delta_\bs^{-1}. 
$$
Hence, the size and the depth of $\phi_{\bs,\bell}$ are 
bounded as 
\begin{align} \label{W(phi_s,ell)}
	W\brac{\phi_{\bs,\bell}}
	&
	\leq
	C
	\brac{ 1 + \abs{\bs}_1 \brac{ \log\abs{\bs}_1 + \log  \delta_\bs^{-1}}} 
\leq
C
\brac{1 + \abs{\bs}_1   \log  \delta_\bs^{-1}}
\end{align}
and
\begin{align} \label{L(phi_s,ell)}
L\brac{\phi_{\bs,\bell}}
\leq
C
\brac{ 1 + \log \abs{\bs}_1 \brac{\log\abs{\bs}_1 + \log \delta_\bs^{-1}}} 
\leq
C
\brac{ 1 + \log	\abs{\bs}_1   \log \delta_\bs^{-1}}.
\end{align}

For approximating $H_\bs^\omega$,
we define the deep ReLU neural network $\phi_\bs$ on  $\RR^{|\nu_\bs|}$ by
\begin{align} \label{g_Lambda^*}
\phi_\bs (\by)
	&
	:=
	\sum_{\boldsymbol{0}\leq \bell\le \bs}
	 a_{\bell}\brac{2\sqrt{\omega}}^{|\bell|_1}
	\phi_{\bs,\bell}
	\brac{ \frac{\by}{2\sqrt{\omega}}},
\quad \by \in \RR^{|\nu_\bs|},
\end{align}
which is a parallelization of the component deep ReLU neural networks 
$\phi_{\bs,\bell}\brac{\frac{\cdot}{2\sqrt{\omega}}}$. 

We define $\bphi_{\Lambda(\xi)}:= (\phi_\bs)_{\bs \in \Lambda(\xi)}$ as the deep ReLU neural network on $\RRm$ realized by parallelization $\phi_\bs$, $\bs \in \Lambda(\xi)$. 

In what follows, according to the above convention,  for $\bs \in \Lambda(\xi)$, in some places   we identify  the functions $\phi_{\bs,\bell}$  and $\phi_\bs$ on $\RR^{|\nu_\bs|}$  with their extentions  on $\RRm$ or on $\RRi$ due to the inclusions  $\nu_\bs \subset \{1,...,m\} \subset \NN$. 

The input dimension of $\bphi_{\Lambda(\xi)}$ is not greater than $m$ which is at most $\lfloor K_q \xi \rfloor$
by Lemma~\ref{lemma: m(xi)}.	
The output dimension of $\bphi_{\Lambda(\xi)}$ is the number $|\Lambda(\xi)|$ which is at most $\lfloor K_q \xi \rfloor$
by Lemma~\ref{lemma: |s|_1, |s|_0}(i). 
From \eqref{supp(g^ell_s)} it follows the inclusion
\begin{align}\label{supp(g)} 
\supp (\phi_\bs) \subset B^{|\nu_\bs|}_{4\omega}, \ \ \bs \in \Lambda(\xi),
\end{align}
and from Lemma~\ref{lemma: |s|_1, |s|_0}(ii) the inequality ${|\nu_\bs|}  \le K_{q,\theta} \xi^{\frac{1}{\theta q}}$. The claims (i)--(iii) are proven.
\\
\\
{\bf Step 2.} In this step, we  prove the claim (vi) in Theorem \ref{thm1-dnn}. 

Due to the inequality \eqref{eq-decompose} and Lemma \ref{lemma:v-S^omega} to  prove the claim (vi) it is sufficient to show the bounds	
\begin{align} \label{S_{Lambda(xi)}^{omega}v- Phi_{Lambda(xi)} v}
\| S_{\Lambda(\xi)}^{\omega}v- \Phi_{\Lambda(\xi)} v\|_{{L_2(B^m_\omega,X,\gamma)}}
	& \leq 
	C\xi^{-1/q},
	\end{align}
and
			\begin{align}	\label{norm-g_Lambda}
	\| \Phi_{\Lambda(\xi)} v  \|_{{L_2(\RRm \setminus B^m_\omega,X,\gamma)}}
	& \leq 
	C\xi^{-1/q},
\end{align}  
	where the positive constants $C$ are independent of $v$ and $\xi$.
	
From the equality
\begin{align} \nonumber
	\Phi_{\Lambda(\xi)}(\by)
	=
	\sum_{\bs\in \Lambda(\xi)} 
	v_{\bs}
	\sum_{\bell=\boldsymbol{0}}^{\bs} 
	a_{\bell}\phi_{\bs,\bell}
	\brac{ \frac{\by}{2\sqrt{\omega}}}
\end{align}
and \eqref{Slambda3}, \eqref{k-01}, \eqref{g_Lambda^*}, similarly to \eqref{S_Lambda - S_Lambda^omega},  we prove \eqref{S_{Lambda(xi)}^{omega}v- Phi_{Lambda(xi)} v}:
\begin{align}
\| S_{\Lambda(\xi)}^{\omega}v- \Phi_{\Lambda(\xi)} v\|_{{L_2(B^m_\omega,X,\gamma)}}
&=
\notag
\norm{\sum_{\bs\in \Lambda(\xi)} 
	v_{\bs} 
	H_{\bs}^{\omega}-  \sum_{\bs \in \Lambda(\xi)} v_\bs \phi_\bs(\by) }{L_2(B^m_\omega,X,\gamma)}
\\
& \leq 
\notag
\sum_{\bs\in \Lambda(\xi)} 
\norm{v_{\bs}}{X} 
\sum_{\bell=\boldsymbol{0}}^{\bs} 
|a_{\bell}|
\brac{2\sqrt{\omega}}^{|\bs|_1}\delta_\bs
\\
&
 \leq 
\xi^{-1/q-1/2}\sum_{\bs\in \Lambda(\xi)} 
\norm{v_{\bs}}{X} 
\le 
C\xi^{-1/q}.
\notag
\end{align}

We now verify \eqref{norm-g_Lambda}.
We first prove the following auxiliary inequality
\begin{align}\label{phi^{bs - be;bk}_{bell}<}
	\left|\phi_{\bs,\bell}
	\left( 
	\frac{\by}{2\sqrt{\omega}}\right)\right| \leq 2, \ \forall \by \in \RRm. 
\end{align}
Due to \eqref{supp(g^ell_s)}, it is sufficient to prove this inequality for 
$\by \in B^{|\nu_\bs|}_{4\omega}$. Observe that the inequalities \eqref{eq-Absbell-2} and $p_\bs(1) \ge 1$ yield $\delta_\bs  \le 1$. 	
On the other hand, from the definition of $h_{\bs,\bell}$ it follows
\begin{align}\nonumber
	\sup_{\by \in B^{|\nu_\bs|}_{4\omega}}	\left|h_{\bs,\bell}
	\left( 
	\frac{\by}{2\sqrt{\omega}}\right)\right| 
	\ \le \
	1
\end{align}
By using  the last two  inequalities, from \eqref{k-01}   we prove
\eqref{phi^{bs - be;bk}_{bell}<} for $\by \in B^{|\nu_\bs|}_{4\omega}$.

By  \eqref{g_Lambda^*}   and \eqref{phi^{bs - be;bk}_{bell}<} we have that
\begin{align*} 
\norm{\Phi_{\Lambda(\xi)} v}{{L_2(\RRm \setminus B^m_\omega,X,\gamma)}}
& 
\leq
\sum_{\bs\in \Lambda(\xi)}
\|	v_{\bs}\|_X
\sum_{\bell = \boldsymbol{0}}^{\bs}
\Big|
a_{\bell}\brac{2\sqrt{\omega}}^{|\bs|_1}\Big|
\norm{\phi_{\bs,\bell}
	\brac{ 
		\frac{\cdot}{2\sqrt{\omega}}}}{L_2(\RRm \setminus B^m_\omega,\gamma)}
\\
& 
\leq
2 \sum_{\bs\in \Lambda(\xi)}
\|	v_{\bs}\|_X
\sum_{\bell = \boldsymbol{0}}^{\bs}
\Big|
a_{\bell}\brac{2\sqrt{\omega}}^{|\bs|_1}\Big|
\norm{1}{L_2(\RRm \setminus B^m_\omega,\gamma)}.
\end{align*}
Applying Lemma \ref{l:Rm trun1} to the polynomial  $\varphi(\by) = 1$, we get	
\begin{align*} 
\norm{\Phi_{\Lambda(\xi)} v}{{L_2(\RRm \setminus B^m_\omega,X,\gamma)}}
& 
\leq 
C m 	\sum_{\bs\in \Lambda(\xi)}
\|	v_{\bs}\|_X \sum_{\bell = \boldsymbol{0}}^{\bs} (4\omega)^{|\bs|_1/2}  \exp(-K\omega) \big|  
a_{\bell} \big|
\\
&
= 
C m	\sum_{\bs\in \Lambda(\xi)}
\|	v_{\bs}\|_X
(4\omega)^{|\bs|_1/2} \exp(-K\omega) \sum_{\bell = \boldsymbol{0}}^{\bs} \big|a_{\bell} \big|. 
\end{align*}
In order to estimate the sum $\sum_{\bell = \boldsymbol{0}}^{\bs} \big|a_{\bell} \big|$, we need an inequality for the coefficients of Hermite polynomials. 
By the representation \eqref{eq-Hs} of 
$H_s$, $s\in \NN_0$, there holds
\begin{equation} \label{ineq-coeff}
\sum_{\ell=0}^{s} |a_{s,\ell}| \leq \sqrt{s!}.
\end{equation}
Indeed, this inequality  is obvious with $s=0,1,2,3$. When $s\geq 4$ we have $\frac{1}{\ell!(s-2\ell)!}\leq \frac{1}{2}$ for all $\ell=0,\ldots,\lfloor s/2\rfloor$. Therefore,
$$
	\sum_{\ell=0}^{s} |a_{s,\ell}| \leq \sqrt{s!} \sum_{\ell=0}^{\left\lfloor \tfrac{s}{2} 
		\right\rfloor} \frac{2^{-\ell} }  {\ell!(s - 2\ell)!}   
	\leq 
	\frac{\sqrt{s!}}  {2}  \sum_{\ell=0}^{\left\lfloor \tfrac{s}{2} 
		\right\rfloor}  2^{-\ell}  \leq \sqrt{s!}.
$$
It follows from \eqref{ineq-coeff} that
\begin{equation}\label{eq-sum-abell}
\sum_{\bell = \boldsymbol{0}}^{\bs} \big|  
a_{\bell} \big| = \sum_{\bell = \boldsymbol{0}}^{\bs} \prod_{j=1}^m 
\big|a_{s_j,\ell_j}\big| \leq \prod_{j=1}^m \sum_{\ell_j=0}^{s_j}|a_{s_j,\ell_j}| \leq \prod_{j=1}^m \sqrt{s_j!},   
\end{equation}
and hence,
\begin{equation}\label{sum-a_bell}
\sum_{\bell = \boldsymbol{0}}^{\bs} \big|  
a_{\bell} \big|   \leq \prod_{j=1}^m  \sqrt{s_j!} 
\leq 
\prod_{j=1}^m |\bs|_1^{s_j/2} = |\bs|_1^{|\bs|_1/2}. 
\end{equation}
By using this estimate and  Lemma \ref{lemma: |s|_1, |s|_0}, we can continue the estimation of 
	$\norm{\Phi_{\Lambda(\xi)} v}{{L_2(\RRm \setminus B^m_\omega,X,\gamma)}}$	as
\begin{align*}
\norm{\Phi_{\Lambda(\xi)} v}{{L_2(\RRm \setminus B^m_\omega,X,\gamma)}}
&
\leq 
Cm  	\sum_{\bs\in \Lambda(\xi)}
\|	v_{\bs}\|_X
(4\omega)^{\frac{m_1}{2}}\exp{(-K\omega)} m_1^{\frac{m_1}{2}}
\\
&  \leq 
Cm  |\Lambda(\xi)|^{1/2}	\bigg(\sum_{\bs\in \Lambda(\xi)}
\|	v_{\bs}\|_X^2\bigg)^{1/2}
(4\omega)^{\frac{m_1}{2}} \exp{(-K\omega)} m_1^{\frac{m_1}{2}}
\\
&
\leq Cm \xi^{1/2} (4\omega)^{\frac{m_1}{2}} \exp{(-K\omega)} m_1^{\frac{m_1}{2}},
\end{align*}
where $m_1=m_1(\xi)$. We have from  the inequality $\frac{1}{\theta q} \le \frac{1}{4}$ and Lemma \ref{lemma: |s|_1, |s|_0} that 
$m_1 \le K_{q,\theta} \xi^{1/4}$, and from Lemma \ref{lemma: m(xi)}  that  $m\leq K_q\xi$. Taking 
account of the choice of $\omega$, we derive the estimate
	\begin{align*}
\| \Phi_{\Lambda(\xi)} v  \|_{{L_2(\RRm \setminus B^m_\omega,X,\gamma)}}
	&
	\leq 
	C \xi^{3/2} (4{K_{q,\theta}}\xi)^{\frac{{K_{q,\theta}}\xi^{1/4}}{2}}  ({K_{q,\theta}}\xi^{1/4})^{\frac{{K_{q,\theta}}\xi^{1/4}}{2}} \exp({-K {K_{q,\theta}}\xi}),
	\end{align*}	
which implies \eqref{norm-g_Lambda}. This completes the proof of the claim (vi).
\\
\\
{\bf Step 3.} 
In this step, we  prove the claims (iv) and (v) in Theorem \ref{thm1-dnn}. 
Namely, we prove that for the size of 
$\bphi_{\Lambda(\xi)}$, 
\begin{align}\label{W} 
W\big(\bphi_{\Lambda(\xi)} \big)
\le C\xi\log \xi,
\end{align}
and  for the depth of $\bphi_{\Lambda(\xi)}$,
\begin{align}	\label{L}
	L\big(\bphi_{\Lambda(\xi)} \big)
	\le 
	C\xi^{1/\theta q} \log\xi,
\end{align}
where the positive constants $C$ are independent of $v$ and $\xi$.

By  Lemma \ref{lem:parallel} and \eqref{W(phi_s,ell)} 
the size of  $\bphi_{\Lambda(\xi)} $ is estimated as
\begin{align} \label{wflambda1}
W\big(\bphi_{\Lambda(\xi)} \big)
= 
\sum_{(\bs,\bell)\in \Lambda^*(\xi)} L(\phi_{\bs,\bell})
\leq 
C
\sum_{(\bs,\bell)\in \Lambda^*(\xi)}
\brac{ 1+ \abs{\bs}_1\log \delta_\bs^{-1}}
\end{align}	
where
\begin{equation}\label{lambda star}
	\Lambda^*(\xi) 
	:=
	\left\{
	(\bs,\bell)\in \FF \times \FF: \bs\in \Lambda(\xi)
	\
	\text{and }
	\boldsymbol{0}\leq \bell \leq \bs
	\right\}.
\end{equation}

From \eqref{eq-Absbell} and the inequality $\log \xi \ge 1$ we derive that
\begin{align} \label{log delta}
\log(\delta_\bs^{-1})
&
\le 
  C \bigg(
\log\xi 
+\log p_\bs(1)+ \abs{\bs}_1 \log \omega
+ \log\brac{\max_{\boldsymbol{0}\le \bell\le \bs} |a_{\bell}|
}\bigg).
\end{align}
Noting that $\log \omega \ge \log \xi$ and $|\bs|_1^2 \ge |\bs|_1$ for all $\bs \in \FF$, we obtain
\begin{equation}\label{eq-log-Absbell0}
\begin{aligned}
\sum_{(\bs,\bell)\in \Lambda^*(\xi)}\brac{ 1 + \abs{\bs}_1\log \delta_\bs^{-1}}
&
\le 
C\Bigg(   \sum_{(\bs,\bell)\in \Lambda^*(\xi)}
\abs{\bs}_1 \log p_\bs(1)
\\
&
+
 \log \omega
\sum_{(\bs,\bell)\in \Lambda^*(\xi)}
\abs{\bs}_1^2
+
\sum_{(\bs,\bell)\in \Lambda^*(\xi)}
\abs{\bs}_1 \log\brac{\max_{\boldsymbol{0}\le \bell\le \bs} |a_{\bell}|
}\Bigg).
\end{aligned}
\end{equation}
To estimate the terms in the right-hand side we need the following auxiliary assertion. 
 Let $\tau\geq 0$, $0<q<\infty$, and $\Lambda^*(\xi)$ be defined in \eqref{lambda star}.
	Assume
	$\brac{p_{\bs}\brac{\frac{\tau+1}{q}}\sigma_{\bs}^{-1}}_{\bs\in \FF}\in \ell_q(\FF)$. 
	Then there holds 
	\begin{align} \label{sum_Lambda^*(xi)}
		\sum_{(\bs,\bell)\in \Lambda^*(\xi)}
		p_{\bs}(\tau) 
		\leq
		C\xi.
	\end{align}
Indeed, from the definition of the set $\Lambda(\xi)$ and the functions $p_{\bs}(\tau)$ we derive 
\begin{align}
	\sum_{(\bs,\bell)\in \Lambda^*(\xi)}
	p_{\bs}(\tau) 
	&
	=
	\sum_{\bs\in \Lambda(\xi)}\sum_{\bell = \boldsymbol{0}}^\bs 
	p_{\bs}(\tau) 
	\leq 
	\xi
	\sum_{\sigma_{\bs}^{-q}\xi\ge 1}\sum_{\bell = \boldsymbol{0}}^\bs 
	p_{\bs}(\tau) 
	\sigma_{\bs}^{-q}
	\notag
	\\
	&=
	\xi
	\sum_{\sigma_{\bs}^{-q}\xi\ge 1}
	\brac{\prod_{j=1}^m(1+s_j)}
	p_{\bs}(\tau) 
	\sigma_{\bs}^{-q}
	\leq 
	\xi
	\sum_{\bs\in \FF}
	p_{\bs}(\tau+1)
	\sigma_{\bs}^{-q}
	\leq 
	C\xi.
	\notag
\end{align} 

Observe that by the definitions \eqref{[p_s]}  we have $\log p_\bs(1) \le |\bs|_1$ and $|\bs|_1^k \le p_\bs(k)$ for $k \in \NN$. Hence, for the first and second terms on the right-hand side of \eqref{eq-log-Absbell0}, since 
$\brac{p_{\bs}\big(\frac{4}{q}\big)
	\sigma_{\bs}^{-1}}_{\bs\in \FF}\in \ell_q(\FF)$ from \eqref{sum_Lambda^*(xi)} we derive that
\begin{equation}\label{eq-first-term}
\sum_{(\bs,\bell)\in \Lambda^*(\xi)}
 \abs{\bs}_1\log p_\bs(1) 
\leq 
\sum_{(\bs,\bell)\in \Lambda^*(\xi)}
\abs{\bs}_1^2
\leq 
\sum_{(\bs,\bell)\in \Lambda^*(\xi)} p_{\bs}(2) \leq C\xi 
\end{equation}
and
\begin{equation}\label{eq-second-term}
 \log \omega\sum_{(\bs,\bell)\in \Lambda^*(\xi)} \abs{\bs}_1^2 
 \leq 
  \log \omega \sum_{(\bs,\bell)\in \Lambda^*(\xi)} p_{\bs}(2) 
  \leq 
  C\xi  \log \omega
\le 
C\xi \log\xi, 
\end{equation}
where in the last inequality we note that $\omega = 
\lfloor K_{q,\theta}\xi \rfloor$, see~\eqref{m,omega}.
Now we turn to the third term in \eqref{eq-log-Absbell0}. 
The inequalities \eqref{eq-sum-abell}  imply
\begin{align*}
	\log\brac{\max_{\boldsymbol{0}\le \bell\le \bs} |a_{\bell}|
	}
	&
	\le 
	\log\Bigg(\prod_{j=1}^m  s_j!
	\Bigg)
	\leq 
	\sum_{j=1}^m
	\log (s_j!)
	\leq 
	\sum_{j=1}^m
	s_j^2
	\leq 
	p_{\bs}(2).
\end{align*}
Using \eqref{sum_Lambda^*(xi)} again we  obtain
\[
\sum_{(\bs,\bell)\in \Lambda^*(\xi)}
\abs{\bs}_1 
p_{\bs}(2)
\le 
\sum_{(\bs,\bell)\in \Lambda^*(\xi)}
p_{\bs}(3)
\le 
C\xi, 
\]
since 
$\brac{p_{\bs}\big(\frac{4}{q}\big)
	\sigma_{\bs}^{-1}}_{\bs\in \FF}\in \ell_q(\FF)$.
This together with \eqref{eq-first-term} and \eqref{eq-second-term}
yields
\begin{align} \label{sum_{(bs,bell) in Lambda^*(xi)}}
\sum_{(\bs,\bell)\in \Lambda^*(\xi)}\brac{ 
	1 
	+ 
	\abs{\bell}_1
	\log 
	\delta_\bs^{-1}
} 
\le 
C\xi\log\xi,
\end{align}
which combined with \eqref{wflambda1} gives \eqref{W}.

We now prove \eqref{L}.
By  Lemma \ref{lem:parallel} and \eqref{L(phi_s,ell)} the depth of  $\bphi_{\Lambda(\xi)} $ is bounded as
\begin{align*}
L\big(\bphi_{\Lambda(\xi)} \big)
= 
\max_{(\bs,\bell)\in \Lambda^*(\xi)} L(\phi_{\bs,\bell})
&
\leq 
C
\max_{(\bs,\bell)\in \Lambda^*(\xi)}
\brac{ 1	+ \log \abs{\bs}_1	\log \delta_\bs^{-1}}.
\end{align*}	
Due to \eqref{log delta}, this inequality can be modified  as
\begin{align} \label{L-flambda2}
L\big(\bphi_{\Lambda(\xi)} \big)
&
\leq 
C
\max_{\bs\in \Lambda(\xi)}
\brac{ 
	\log \abs{\bs}_1
}\max_{(\bs,\bell)\in \Lambda^*(\xi)}
\brac{ 
\log  \delta_\bs^{-1}
}.
\end{align}	
From Lemma \ref{lemma: |s|_1, |s|_0}  we obtain
\begin{align*}
\max_{\bs\in \Lambda(\xi)}\brac{ \log \abs{\bs}_1}
\le 
C \log \xi.
\end{align*}	
We have by \eqref{log delta} that
\begin{equation}\label{L-eq-log-Absbell}
\begin{aligned}
\max_{(\bs,\bell)\in \Lambda^*(\xi)}\brac{ \delta_\bs^{-1}}
&
\le 
C \brac{
	\log \xi
+ \max_{\bs\in \Lambda(\xi)}
 \log p_\bs(1)
+
\log (2\omega)
\max_{\bs\in \Lambda(\xi)}
\abs{\bs}_1 
+
\max_{\bs\in \Lambda(\xi)}
 \log\brac{\max_{\boldsymbol{0}\le \bell\le \bs} |a_{\bell}|}
}.
\end{aligned}
\end{equation}
For the second  and third terms on the right-hand side, we have by the well-known inequality $\log p_\bs(1) \le |\bs|_1$ and Lemma \ref{lemma: |s|_1, |s|_0},
\begin{equation*}\label{L-eq-first-term}
 \max_{\bs\in \Lambda(\xi)} \log p_\bs(1) 
\leq 
\max_{\bs\in \Lambda(\xi)} \abs{\bs}_1  \leq C\xi^{1/\theta q} 
\end{equation*}
and
\begin{equation*}\label{L-eq-second-term}
\log (2\omega)\max_{\bs\in \Lambda(\xi)}
\abs{\bs}_1
 \leq 
 C \xi^{1/\theta q}  \log \xi.
\end{equation*}
Now we turn to the fourth term in \eqref{L-eq-log-Absbell}. 
From \eqref{sum-a_bell} it follows that
\begin{align*}
\log\brac{\max_{\boldsymbol{0}\le \bell\le \bs} |a_{\bell}|
}
&
\le 
\log \brac{|\bs|_1^{|\bs|_1}}
= 
|\bs|_1 \log |\bs|_1.
\end{align*}
Hence,
\[
\max_{(\bs,\bell)\in \Lambda^*(\xi)}
\log\brac{\max_{\boldsymbol{0}\le \bell\le \bs} |a_{\bell}|}
\le 
\max_{\bs\in \Lambda(\xi)}
\brac{\abs{\bs}_1 \log \abs{\bs}_1}
\le 
C\xi^{1/\theta q} \log \xi. 
\]
This together with \eqref{L-flambda2}--\eqref{L-eq-second-term}
yields \eqref{L}.

The proof of Theorem \ref{thm1-dnn} is complete. 	
\hfill
\end{proof}

\begin{theorem}\label{thm1-dnn-2}
	Let  $v \in \Ll_2(X)$ satisfy Assumption A$_{q,\theta}$ with
	$\theta \ge 4/q$.  Then
	for every integer $n > 2$, we can  construct  a deep ReLU neural network $\bphi_{\Lambda(\xi_n)}:= (\phi_\bs)_{\bs \in \Lambda(\xi_n)}$ on 
	$\RR^m$ with $m:=\left\lfloor K _q\frac{n}{\log n} \right\rfloor$ for some positive constant $K_q$, having the following properties. 
	\begin{itemize}
		\item [{\rm (i)}]
		The deep ReLU neural network $\bphi_{\Lambda(\xi_n)}$ is independent of $v$;
		\item [{\rm (ii)}] 
		The input and output dimensions of $\bphi_{\Lambda(\xi_n)}$ are at most $m$;	
			\item [{\rm (iii)}] 
			The  components $\phi_{\bs}$, $\bs\in \Lambda(\xi_n)$, of $\bphi_{\Lambda(\xi_n)}$ are deep ReLU neural networks on 
			$\RR^{|\nu_\bs|}$ with ${|\nu_\bs|}  \le  C_\delta n^\delta$, having support contained in the supercube $[-T,T]^{|\nu_\bs|}$, 
			where $T:= C_\delta \sqrt{\frac{n}{\log n}}$ and $\delta:= \frac{1}{q \theta}$ ;
		\item [{\rm (iv)}] 
		$W\big(\bphi_{\Lambda(\xi_n)}\big) \le n $;
		\item [{\rm (v)}] 
		$L\big(\bphi_{\Lambda(\xi_n)}\big)\le C_\delta n^\delta$;
		\item [{\rm (vi)}]
		The approximation of $v$ by  $\Phi_{\Lambda(\xi_n)}v$ defined as in Theorem \ref{thm1-dnn}(iv),  gives the error estimates 
		$$
		\|v - \Phi_{\Lambda(\xi_n)} v \|_{\Ll_2(X)} 
		\leq C m^{-1/q}
		\leq C \left(\frac{n}{\log n}\right)^{-1/q}.
		$$
	\end{itemize}
Here the positive constants $C = C_{M,q,\bsigma,\theta}$ are independent of $v$ and $n$.	
\end{theorem}
\begin{proof} For a given integer $n > 1$, we choose $\xi_n \ge 2$ as the maximal number satisfying the inequality $C \xi_n \log \xi_n \le n$, where $C$ is  the positive constant in the claim (iii) of Theorem~\ref{thm1-dnn}.  It is easy to verify that there exists a positive constant $C_1$ independent of $n$ such that 
	\begin{equation} \label{<n<}
			C \xi_n \log \xi_n \le n \le C_1\xi_n \log \xi_n.
	\end{equation}
Hence, $C_2 \log \xi_n \le \log n \le C_3 \log \xi_n$ for some positive constants $C_2, C_3$. From the last inequalities and \eqref{<n<} we derive that
there exist positive constants $C_4$ and $C_5$ independent of $n$ such that 
	$$
	C_4 \frac{n}{\log n} \le  \xi_n \le C_5\frac{n}{\log n}. 
	$$
	From Theorem \ref{thm1-dnn} with $\xi=\xi_n$ we deduce the desired results.
	\hfill
\end{proof}

\subsection{Application to parameterized elliptic PDEs with lognormal inputs}
\label {lognormal inputs}

In this section, we apply the results in the previous section to deep ReLU neural network approximation of  the solution $u(\by)$ to the parametric elliptic PDEs \eqref{ellip} with lognormal inputs \eqref{lognormal}.
This is  based on a weighted $\ell_2$-summability of  the series $(\|u_\bs\|_V)_{\bs \in \FF}$ in  following lemma which  conbines \cite[Theorems 3.3 and 4.2]{BCDM17} and \cite[Lemma 5.3]{Dung21}.
\begin{lemma}\label{lemma[bcdm]} Let $\theta$ be an 
	arbitrary nonnegative number and  $(p_\bs(\theta))_{\bs \in 
		\FF}$ the sequence given as in \eqref{[p_s]}.
	Let $0 < q <\infty$ and	
$(\rho_j) _{j \in \NN}$ be an increasing sequence of  positive 
numbers such that  $(\rho_j^{-1}) _{j \in 
	\NN}$ belongs to   $\ell_q(\NN)$.  	Assume further that 
	\begin{equation} \nonumber
	\left\| \sum _{j \in \NN} \rho_j |\psi_j| \right\|_{L_\infty(D)} 
	<\infty\;.
	\end{equation} 
	Then we have that for any $\eta \in \NN$,
		\begin{equation*} 
		\sum_{\bs\in\FF} (\sigma_{\bs} \|u_\bs\|_V)^2 <\infty\ \ \text{with} \ \  \big(p_\bs(\theta)\sigma_\bs^{-1}\big)_{\bs\in \FF} \in {\ell_{q}}(\FF),
	\end{equation*}
where
	\begin{equation} \label{sigma_s}
	\sigma_{\bs}^2:=\sum_{\|\bs'\|_{\ell_\infty(\NN)}\leq \eta}{\bs\choose \bs'} \prod_{j \in \NN}\rho_j^{2s_j'}.
	\end{equation}
\end{lemma}

This weighted $\ell_2$-summability result leads to significant improvements of the convergence rate in the case when the component functions $\psi_j$ have limited overlaps such as splines, finite elements or wavelet bases (for details, see \cite{BCDM17}).
Our result for the solution $u$ to the parametric elliptic PDEs \eqref{ellip} with lognormal inputs \eqref{lognormal}  is read as follows.

\begin{theorem}\label{thm-PDE-lognormal-dnn}
	Under the assumptions of Lemma 
\ref{lemma[bcdm]},  let  $0 < q < \infty$ and 
$\delta$ be arbitrary positive number. Then
	for every integer $n > 2$, we can  construct  a deep ReLU neural network $\bphi_{\Lambda(\xi_n)}:= (\phi_\bs)_{\bs \in \Lambda(\xi_n)}$ on 
	$\RR^m$ with $m:=\left\lfloor K \frac{n}{\log n} \right\rfloor$ for some positive constant $K$, having the following properties. 
	\begin{itemize}
			\item [{\rm (i)}]
		The deep ReLU neural network $\bphi_{\Lambda(\xi_n)}$ is independent of $u$;
		\item [{\rm (ii)}] 
		The input and output dimensions of $\bphi_{\Lambda(\xi_n)}$ are at most $m$;	
			\item [{\rm (iii)}] 
			The  components $\phi_{\bs}$, $\bs\in \Lambda(\xi_n)$, of $\bphi_{\Lambda(\xi_n)}$ are deep ReLU neural networks on 
			$\RR^{|\nu_\bs|}$ with ${|\nu_\bs|}  \le  C_\delta n^\delta$, having support contained in the super-cube $[-T,T]^{|\nu_\bs|}$, 
			where $T:= C_\delta \sqrt{\frac{n}{\log n}}$;
		\item [{\rm (iv)}] 
		$W\big(\bphi_{\Lambda(\xi_n)}\big) \le n $;
		\item [{\rm (v)}] 
		$L\big(\bphi_{\Lambda(\xi_n)}\big)\le C_\delta n^\delta$
		\item [{\rm (vi)}]
		The approximation of $u$ by  $\Phi_{\Lambda(\xi_n)}u$ defined as in Theorem \ref{thm1-dnn}(iv),  gives the error estimates 
		$$
		\| u- \Phi_{\Lambda(\xi_n)} u \|_{\Ll_2(V)} 
			\leq C m^{-1/q}
		\leq C \left(\frac{n}{\log n}\right)^{-1/q}.
		$$
	\end{itemize}
	Here the positive constants $C$ and  $C_\delta$ are independent of $u$ and $n$.	
\end{theorem}

\begin{proof} Since $(\rho_j) _{j \in \NN}$ is an increasing sequence of  positive 
		numbers, it is easily seen that $\bsigma =(\sigma_\bs)_{\bs \in \FF}$ is an increasing sequence and that   $\sigma_{\be^{i'}} \le \sigma_{\be^i}$ if $i' 
		< i$. Therefore Assumption A$_{q,\theta}$ is satisfied. To prove the theorem we apply Theorem \ref{thm1-dnn-2} to the solution $u$.  Without loss of generality we can assume that $\delta \le 1/4$. We take first the number 
$\theta := 1/\delta q$ satisfying the inequality $\theta \ge 4/q$, and then choose a number $\eta \in \NN$ satisfying the inequality $\eta > \frac{2(\theta + 1)}{q}$. By using Lemma~\ref{lemma[bcdm]} one can check that  for $X= V$ and the sequence $(\sigma_\bs)_{\bs \in \FF}$ defined as in \eqref{sigma_s}, $u \in \Ll_2(V)$ satisfies the assumptions of Theorem \ref{thm1-dnn-2} which proves the theorem. 
	\hfill
\end{proof}


\subsection{Application to approximation of holomorphic functions}
\label {holomorphic maps}
In this section we show that some holomorphic functions satisfy the assumption and therefore we can explicitly construct deep ReLU neutral network to approximate them. Let us first introduce the concept of  ``$(\bb,\xi,\varepsilon,X)$-holomorphic
	functions''  which has been introduced in \cite{DNSZ22}.

We recall  the concept of ``$(\bb,\xi,\varepsilon,X)$-holomorphic
functions''  which has been introduced in \cite{DNSZ22}. 
For $N\in\N$ and a positive sequence $\bvarrho=(\varrho_j)_{j=1}^N$,  we put
\begin{equation}
	\label{eq:Sjrho}
	\Ss(\bvarrho) := \set{\bz\in \CC^N}{|\mathfrak{Im}z_j| < \varrho_j~\forall j}\qquad\text{and}\qquad
	\Bb(\bvarrho) := \set{\bz\in\CC^N}{|z_j|<\varrho_j~\forall j}.
\end{equation}

	Let $X$ be a complex separable Hilbert space,
	$\bb=(b_j)_{j\in\N}$ a positive sequence, and $\xi>0$, $\varepsilon>0$.
	For $N\in\N$ we say that a positive sequence $\brho=(\rho_j)_{j=1}^N$ is
	\emph{$(\bb,\xi)$-admissible} if
	\begin{equation}\label{eq:adm}
		\sum_{j=1}^N b_j\varrho_j\leq \xi\,.
	\end{equation}
	A function $v\in \Ll_2(X)$ is called
	$(\bb,\xi,\varepsilon,X)$-holomorphic if
	\begin{enumerate}
		\item[{\rm (i)}]\label{item:hol} for every $N\in\N$ there exists
		$v_N:\R^N\to X$, which, for every $(\bb,\xi)$-admissible
		$\bvarrho$, admits a holomorphic extension
		(denoted again by $v_N$) from $\Ss(\bvarrho)\to X$; furthermore,
		for all $N<M$
		\begin{equation}\label{eq:un=um}
			v_N(y_1,\dots,y_N)=v_M(y_1,\dots,y_N,0,\dots,0)\qquad\forall (y_j)_{j=1}^N\in\R^N,
		\end{equation}
		
		\item[{\rm (ii)}]\label{item:varphi} for every $N\in\N$ there exists
		$\varphi_N:\R^N\to\R_+$ such that
		$\norm{\varphi_N}{L^2(\R^N;\gamma)}\le\varepsilon$ and
		\begin{equation*} \label{ineq[phi]}
			\sup_{\text{$\brho$ is $(\bb,\xi)$-adm.}}~\sup_{\bz\in
				\Bb(\bvarrho)}\norm{v_N(\by+\bz)}{X}\le
			\varphi_N(\by)\qquad\forall\by\in\R^N,
	\end{equation*}
	\item[{\rm (iii)}]\label{item:vN} with $\tilde v_N:\RRi\to X$ defined by
	$\tilde v_N(\by) :=v_N(y_1,\dots,y_N)$ for $\by\in \RRi$ it holds
	\begin{equation*}
		\lim_{N\to\infty}\norm{v-\tilde v_N}{\Ll_2(X)}=0.
	\end{equation*}
\end{enumerate}

The following key result on  weighted $\ell_2$-summability of $(\bb,\xi,\varepsilon,X)$-holomorphic functions has been proven in \cite[Corollary 4.9,]{ DNSZ22}.
\begin{theorem} \label{thm:Holom-AssumpA} Let $v$ be
	$(\bb,\xi,\varepsilon,X)$-holomorphic for some $\bb\in \ell_p(\N)$ with $0< p <1$.  Let $\eta\in\N$ and let the sequence $\brho=(\rho_j)_{j \in \NN}$ be defined by
	$$
	\rho_j:=b_j^{p-1}\frac{\xi}{4\sqrt{\eta!}} \norm{\bb}{\ell_p(\NN)}.
	$$
Assume that $\bb$ is a decreasing sequence and that $b_j^{p-1}\frac{\xi}{4\sqrt{\eta!}} \norm{\bb}{\ell_p(\NN)}>1$ for all $j\in \NN$.
	Then $v$ satisfies Assumption A$_{q,\theta}$ for $q := \frac{p}{1-p}$,
	$\bsigma =\bsigma(\brho,\eta):=(\sigma_\bs)_{\bs \in \FF}$ given by \eqref{sigma_s}, and 
	$M:= \varepsilon C_{\bb,\xi}$ with some positive  constant $ C_{\bb,\xi}$.
		\end{theorem}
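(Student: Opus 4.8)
The plan is to verify the two requirements of Assumption A for the sequence $\bsigma$ defined in \eqref{sigma_s}: the weighted bound $\sum_{\bs\in\FF}(\sigma_\bs\norm{v_\bs}{X})^2\le M^2$ with $M=\varepsilon C_{\bb,\xi}$, and the membership $(\sigma_\bs^{-1})_{\bs\in\FF}\in\ell_q(\FF)$ for $q=p/(1-p)$. The second requirement is the cheaper one and reduces to the univariate sequence: writing $c:=\frac{\xi}{4\sqrt{\eta!}}\norm{\bb}{\ell_p}$, a direct computation gives $\rho_j^{-q}=c^{-q}b_j^{(1-p)q}=c^{-q}b_j^{p}$, so $\sum_j\rho_j^{-q}=c^{-q}\norm{\bb}{\ell_p}^p<\infty$ since $\bb\in\ell_p$; hence $(\rho_j^{-1})_{j\in\NN}\in\ell_q(\NN)$ (after reordering so that $(b_j)$ is non-increasing, which makes $\brho$ increasing and $\bsigma$ increasing as required). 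Feeding this into Lemma \ref{lemma[bcdm]} with $\theta=0$ yields $\sum_{\bs\in\FF}\sigma_\bs^{-q}<\infty$, i.e. $(\sigma_\bs^{-1})_{\bs\in\FF}\in\ell_q(\FF)$, for $\eta$ taken large enough.

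The heart of the argument is the weighted $\ell_2$-bound, and here the idea is to trade the Hermite weights for $L_2(\gamma)$-norms of mixed derivatives. Using the differentiation rule for the normalized Hermite polynomials, $\partial^{\bs'}H_\bs=\sqrt{\bs!/(\bs-\bs')!}\,H_{\bs-\bs'}$ for $\bs'\le\bs$, together with Parseval's identity, one first obtains $\sum_{\bs\ge\bs'}\binom{\bs}{\bs'}\norm{v_\bs}{X}^2=\tfrac{1}{\bs'!}\norm{\partial^{\bs'}v}{L_2(\RRi,X,\gamma)}^2$. Interchanging the order of summation in \eqref{sigma_s} then gives
\begin{equation}\nonumber
\sum_{\bs\in\FF}\sigma_\bs^2\norm{v_\bs}{X}^2
=\sum_{\norm{\bs'}{\ell_\infty(\NN)}\le\eta}\frac{1}{\bs'!}\Big(\prod_{j\in\NN}\rho_j^{2s_j'}\Big)\norm{\partial^{\bs'}v}{L_2(\RRi,X,\gamma)}^2.
\end{equation}
Since $v$ is only supplied as the limit of the $\tilde v_N$, I would establish this identity first for each $\tilde v_N$, whose mixed derivatives are genuine objects on $\R^N$, and then pass to the limit $N\to\infty$ using property (iii) and Fatou's lemma.

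It then remains to bound each derivative norm by $\varepsilon$. For a fixed $\bs'$ of finite support and any $(\bb,\xi)$-admissible radius vector $\bvarrho=\bvarrho(\bs')$ supported on $\nu_{\bs'}$, the holomorphic extension (i) of $v_N$ to $\by+\Bb(\bvarrho)\subset\Ss(\bvarrho)$ together with the majorant bound (ii) let me apply the Cauchy integral formula on the polydisc, yielding the pointwise Cauchy estimate $\norm{\partial^{\bs'}v_N(\by)}{X}\le\bs'!\big(\prod_{j}\varrho_j^{-s_j'}\big)\varphi_N(\by)$ and hence $\norm{\partial^{\bs'}v_N}{L_2(\RRi,X,\gamma)}\le\bs'!\big(\prod_j\varrho_j^{-s_j'}\big)\varepsilon$. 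Substituting this back, the whole weighted sum is dominated by $\varepsilon^2\sum_{\norm{\bs'}{\ell_\infty}\le\eta}\bs'!\prod_j(\rho_j/\varrho_j)^{2s_j'}$, and one is free to optimize the admissible radii $\bvarrho(\bs')$ for each $\bs'$, saturating $\sum_j b_j\varrho_j=\xi$.

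The main obstacle is precisely this last combinatorial step: showing that, after the optimal admissible choice of $\bvarrho(\bs')$, the series $\sum_{\norm{\bs'}{\ell_\infty}\le\eta}\bs'!\prod_j(\rho_j/\varrho_j)^{2s_j'}$ sums to a finite constant $C_{\bb,\xi}$. This is where the precise shape of $\rho_j$ enters: the factor $b_j^{p-1}$ is calibrated against the admissibility weight $b_j$, the cap $\norm{\bs'}{\ell_\infty}\le\eta$ keeps each $s_j'!\le\eta!$, and the normalizing constant $4\sqrt{\eta!}$ in $\rho_j$ is chosen exactly to absorb these factorials and leave a geometrically summable tail, producing $M=\varepsilon C_{\bb,\xi}$. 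I expect the bookkeeping of the factorial and multinomial factors (handled via AM--GM and multinomial bounds) to be the delicate part, whereas the contour-shift/Cauchy estimate and the passage $N\to\infty$ are routine.
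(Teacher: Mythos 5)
You should first be aware that the paper contains no proof of Theorem \ref{thm:Holom-AssumpA} at all: it is quoted from \cite[Corollary 4.9]{DNSZ22}, so the only meaningful comparison is with the proof in that reference. Your outline does follow exactly that reference's route: (a) the identity $\sum_{\bs\in\FF}(\sigma_\bs\norm{v_\bs}{X})^2=\sum_{\norm{\bs'}{\ell_\infty(\NN)}\le\eta}\frac{1}{\bs'!}\bigl(\prod_{j}\rho_j^{2s_j'}\bigr)\norm{\partial^{\bs'}v}{L_2(\RRi,X,\gamma)}^2$ coming from $\partial^{\bs'}H_\bs=\sqrt{\bs!/(\bs-\bs')!}\,H_{\bs-\bs'}$ and Parseval, (b) the Cauchy polydisc estimate $\norm{\partial^{\bs'}v_N}{L_2}\le\bs'!\bigl(\prod_j\varrho_j^{-s_j'}\bigr)\varepsilon$, valid since $\by+\Bb(\bvarrho)\subset\Ss(\bvarrho)$ and $\norm{\varphi_N}{L_2}\le\varepsilon$, and (c) passage $N\to\infty$ by Fatou. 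Your treatment of the $\ell_q$-membership half of Assumption A ($\rho_j^{-q}=c^{-q}b_j^{(1-p)q}=c^{-q}b_j^p$, then Lemma \ref{lemma[bcdm]} with $\theta=0$) is correct, and the caveat ``$\eta$ large enough'' ($\eta>2/q$) is in fact unavoidable for that half and consistent with how the theorem is invoked later in the paper.

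The genuine gap is precisely the step you defer as ``bookkeeping'': choosing the admissible $\bvarrho(\bs')$ and proving $\sum_{\norm{\bs'}{\ell_\infty(\NN)}\le\eta}\bs'!\prod_j(\rho_j/\varrho_j)^{2s_j'}<\infty$ is not a routine optimization but the entire content of the theorem, and your proposal never exhibits a choice that works. Worse, the choice suggested by your phrase ``saturating $\sum_jb_j\varrho_j=\xi$'' fails: taking $\varrho_j=\xi b_j^{p-1}/\norm{\bb}{\ell_p}^p$, which saturates the budget, makes $\rho_j/\varrho_j$ equal to a constant $\kappa>0$ independent of $j$, so the series becomes the infinite product $\prod_{j\in\NN}\bigl(\sum_{s=0}^{\eta}s!\,\kappa^{2s}\bigr)$, whose factors are identical and strictly larger than $1$, hence it diverges. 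What actually works (and is the heart of the cited proof) is a mixture of a support-adapted and a global allocation, e.g. $\varrho_j=\frac{\xi}{4}\bigl(\frac{s_j'}{b_j|\bs'|_1}+\frac{b_j^{p-1}}{\norm{\bb}{\ell_p}^p}\bigr)$: one splits the exponent $2s_j'$ between the two resulting lower bounds on $\varrho_j$, uses $\bs'!\le\prod_j(s_j')^{s_j'}$, $|\bs'|_1^{|\bs'|_1}\le e^{|\bs'|_1}|\bs'|_1!$ and $s_j'!\le(\eta!)^{s_j'}$, and then the sum over $\{|\bs'|_1=n\}$ collapses via the multinomial theorem to $\bigl(\sum_jb_j^p\bigr)^n$, with the factor $4\sqrt{\eta!}$ in the definition of $\rho_j$ calibrated exactly so that the resulting geometric ratio is below $1$. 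Without this construction and computation, your proposal establishes the easy half of Assumption A plus a correct reduction, but not the weighted $\ell_2$-bound with $M=\varepsilon C_{\bb,\xi}$, which is the assertion that matters.
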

The condition $\bb$ is a decreasing sequence implies $\brho$ is an increasing sequence and therefore  $\sigma_{\be^{i'}} \le \sigma_{\be^i}$ if $i' 
	< i$. Moreover   $\rho_j=b_j^{p-1}\frac{\xi}{4\sqrt{\eta!}} \norm{\bb}{\ell_p(\NN)}>1$ for all $j\in \NN$ and 	$\bsigma =\bsigma(\brho,\eta):=(\sigma_\bs)_{\bs \in \FF}$ given by \eqref{sigma_s} imply that $\big(p_\bs(\theta)\sigma_\bs^{-1}\big)_{\bs\in \FF} \in {\ell_{q}}(\FF)$ for any $\theta>0$. For a proof we refer the reader again to \cite[Lemma 5.3]{DDK21}.
	
	In the same way as the proof of Theorem \ref{thm-PDE-lognormal-dnn}, from Theorem \ref{thm:Holom-AssumpA} we derive 
	
	\begin{theorem}\label{thm-HolomorphicMaps}
		Let $v$ be
		$(\bb,\xi,\varepsilon,X)$-holomorphic for some $\bb\in \ell_p(\N)$ with $0< p <1$, and let  
		$\delta$ be an arbitrary positive number. Then, with the notations of Theorem \ref{thm:Holom-AssumpA}, 
		for every integer $n > 2$, we can  construct  a deep ReLU neural network $\bphi_{\Lambda(\xi_n)}:= (\phi_\bs)_{\bs \in \Lambda(\xi_n)}$ on 
		$\RR^m$ with $m:=\left\lfloor K \frac{n}{\log n} \right\rfloor$ for some positive constant $K$, having the following properties. 
		\begin{itemize}
			\item [{\rm (i)}]
			The deep ReLU neural network $\bphi_{\Lambda(\xi_n)}$ is independent of $v$;
			\item [{\rm (ii)}] 
			The input and output dimensions of $\bphi_{\Lambda(\xi_n)}$ are at most $m$;	
			\item [{\rm (iii)}] 
			The  components $\phi_{\bs}$, $\bs\in \Lambda(\xi_n)$, of $\bphi_{\Lambda(\xi_n)}$ are deep ReLU neural networks on 
				$\RR^{|\nu_\bs|}$ with ${|\nu_\bs|}  \le  C_\delta n^\delta$, having support contained in the super-cube $[-T,T]^{|\nu_\bs|}$, 
				where $T:= C_\delta \sqrt{\frac{n}{\log n}}$;
			\item [{\rm (iv)}] 
			$W\big(\bphi_{\Lambda(\xi_n)}\big) \le n $;
			\item [{\rm (v)}] 
			$L\big(\bphi_{\Lambda(\xi_n)}\big)\le C_\delta n^\delta$;
			\item [{\rm (vi)}]
			The approximation of $v$ by  $\Phi_{\Lambda(\xi_n)}v$ defined as in Theorem \ref{thm1-dnn}(iv),  gives the error estimates 
			$$
			\|v - \Phi_{\Lambda(\xi_n)} v\|_{\Ll_2(X)} 
			\leq C m^{- (1/p - 1)}
			\leq C \left(\frac{n}{\log n}\right)^{- (1/p - 1)}.
			$$
		\end{itemize}
		Here the positive constants $C$, $C_\delta$ and  $C'_\delta$ are independent of $v$ and $n$.	
	\end{theorem}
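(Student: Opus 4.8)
The plan is to obtain Theorem~\ref{thm-HolomorphicMaps} as a direct specialization of the general deep ReLU approximation result Theorem~\ref{thm1-dnn}, following verbatim the scheme by which Theorem~\ref{thm-PDE-lognormal-dnn} was derived; the sole modification is that the verification of Assumption~A is now furnished by the holomorphy-based summability result Theorem~\ref{thm:Holom-AssumpA} in place of Lemma~\ref{lemma[ell_2summability]}. The one identity to keep in view is that the assignment $q:=p/(1-p)$ gives $1/q=1/p-1$, so the generic error exponent $\xi^{-1/q}$ of Theorem~\ref{thm1-dnn} becomes exactly the claimed $(n/\log n)^{-(1/p-1)}$.

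First I would fix the free parameters. Assuming without loss of generality that $\delta\le 1/4$, set $\theta:=1/(\delta q)$; this simultaneously secures $\theta\ge 4/q$ (required to invoke Theorem~\ref{thm1-dnn}) and $1/(\theta q)=\delta$ (which converts the depth bound of Theorem~\ref{thm1-dnn}(iv) into the claimed $n^\delta$). Next I would pick $\eta\in\N$ large enough and take the weight sequence $\brho=(\rho_j)$ as prescribed in Theorem~\ref{thm:Holom-AssumpA} (built from $\bb$, $p$, $\eta$ and the holomorphy parameter). Applying Theorem~\ref{thm:Holom-AssumpA} then shows that $v$ satisfies Assumption~A with this $q$, the sequence $\bsigma=\bsigma(\brho,\eta)$ defined by~\eqref{sigma_s}, and constant $M=\varepsilon C_{\bb,\xi}$. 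It remains to check the two auxiliary hypotheses of Theorem~\ref{thm1-dnn}: the coordinatewise ordering $\sigma_{\be^{i'}}\le\sigma_{\be^i}$ for $i'<i$, which follows from $\sigma_{\be^i}=\sqrt{1+\rho_i^2}$ once $\brho$ is increasing (i.e. once $\bb$ is arranged non-increasing, which leaves $\norm{\bb}{\ell_p}$ unchanged), and the weighted summability $(p_\bs(\theta)\sigma_\bs^{-1})_{\bs\in\FF}\in\ell_q(\FF)$; since $\rho_j^{-1}$ is a constant multiple of $b_j^{1-p}$ one has $\sum_j(\rho_j^{-1})^q=\mathrm{const}\cdot\sum_j b_j^p<\infty$, so $(\rho_j^{-1})_j\in\ell_q(\N)$ and Lemma~\ref{lemma[bcdm]} delivers the required summability for the chosen $\eta$.

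With all hypotheses of Theorem~\ref{thm1-dnn} in force, I would calibrate the truncation level to the prescribed network size. Given an integer $n>1$, let $\xi_n>1$ be the largest number satisfying $C\xi_n\log\xi_n\le n$, where $C$ is the constant in Theorem~\ref{thm1-dnn}(iii); a routine comparison yields $C_1\frac{n}{\log n}\le\xi_n\le C_2\frac{n}{\log n}$ with $C_1,C_2$ independent of $n$. Substituting $\xi=\xi_n$ into Theorem~\ref{thm1-dnn} then reads off the six claims: (i) is immediate since the construction is $v$-independent; (ii) the dimensions are at most $\lfloor K_q\xi_n\rfloor\le\lfloor K\frac{n}{\log n}\rfloor=:m$; (iii) $W\le C\xi_n\log\xi_n\le n$ by the choice of $\xi_n$; (iv) $L\le C\xi_n^{1/(\theta q)}=C\xi_n^{\delta}\le C_\delta n^\delta$; (v) $T=4\sqrt{\lfloor K_{q,\theta}\xi_n\rfloor}\le C'_\delta\sqrt{\frac{n}{\log n}}$; and (vi) the error is at most $C\xi_n^{-1/q}\le C(\frac{n}{\log n})^{-1/q}=C(\frac{n}{\log n})^{-(1/p-1)}$.

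The parameter bookkeeping above is routine; the one genuinely delicate point is the verification of the two structural hypotheses of Theorem~\ref{thm1-dnn} for the holomorphy-induced weights $\bsigma(\brho,\eta)$. I expect the main obstacle to be ensuring that a single admissible $\eta$ can be chosen to meet the threshold of Lemma~\ref{lemma[bcdm]} in the form actually needed here — the relevant series is $\sum_\bs p_\bs(\theta)^q\sigma_\bs^{-q}=\sum_\bs p_\bs(\theta q)\sigma_\bs^{-q}$, so the threshold must be read off at exponent $\theta q$ — while simultaneously keeping $\rho_j$ monotone so that the coordinatewise ordering of $\bsigma$ holds. Once these are secured, the rest is an immediate transcription of Theorem~\ref{thm1-dnn}, identical in form to the proof of Theorem~\ref{thm-PDE-lognormal-dnn}.
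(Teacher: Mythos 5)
Your proposal is correct and follows exactly the paper's route: the paper's own proof is a one-line reduction stating that Theorem \ref{thm-HolomorphicMaps} follows from Theorem \ref{thm:Holom-AssumpA} and Lemma \ref{lemma[bcdm]} ``in the same way as the proof of Theorem \ref{thm-PDE-lognormal-dnn}'', which is precisely the specialization of Theorem \ref{thm1-dnn} with $q=p/(1-p)$, $\theta=1/(\delta q)$, suitable $\eta$, and $\xi_n$ calibrated by $C\xi_n\log\xi_n\le n$ that you carry out. In fact you are more careful than the paper on the two points it glosses over --- arranging $\bb$ non-increasing so that $\sigma_{\be^{i'}}\le\sigma_{\be^i}$ holds, and reading the threshold of Lemma \ref{lemma[bcdm]} at exponent $\theta q$ since $p_\bs(\theta)^q=p_\bs(\theta q)$ --- both of which are needed to invoke Theorem \ref{thm1-dnn} rigorously.
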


We notice some important examples of $(\bb,\xi,\varepsilon,X)$-holomorphic functions which are solutions to parametric PDEs equations and which were studied in \cite{DNSZ22}. 

Formally, 
replacing $\by=(y_j)_{j \in \NN}\in \RRi$ in the coefficient $a(\by)$ in \eqref{lognormal} by
$\bz=(z_j)_{j \in \NN}=(y_j+i \xi_j)_{j \in \NN}\in \CC^\infty$, 
the real part of $a(\bz)$ is
\begin{equation} \label{Re(a)} 
	\mathfrak{R}[a(\bz)] =
\exp\Bigg({\sum_{j \in \NN} y_j\psi_j(\bx)}\Bigg) \cos\Bigg(\sum_{j
	\in \NN} \xi_j\psi_j(\bx)\Bigg)\,.
\end{equation}
We find that $\mathfrak{R}[a(\bz)]>0$ if
$$
\bigg\|\sum_{j \in \NN} \xi_j\psi_j \bigg\|_{L_\infty(D)} <
\frac{\pi}{2}.
$$
This observation motivate the study
of the analytic continuation of the solution map $\by \mapsto u(\by)$
to $\bz \mapsto u(\bz)$ for complex parameters
$\bz = (z_j)_{j \in \NN}$ where each
$z_j$ lies in the strip
\begin{equation} \label{eq:DefSjrho} \mathcal{S}_j (\brho):= \{ z_j\in
\CC\,: |\mathfrak{Im}z_j| < \rho_j\}
\end{equation}
and where $\rho_j>0$ and
$\brho=(\rho_j)_{j \in \NN} \in (0,\infty)^\infty$ is any sequence of
positive numbers such that
\begin{equation*} \label{k-011} \Bigg\|\sum_{j\in \NN} \rho_j
|\psi_j|\Bigg\|_{L_\infty(D)} < \frac{\pi}{2}\,.
\end{equation*} 
For further detail of this continuation we refer to \cite[Proposition 3.8]{DNSZ22}.

In general, let $b(\by)$ be defined as in \eqref{lognormal} and $\Vv$  a holomorphic map from an open set in  $L_\infty(D)$ to $X$. Then function compositions of the type 
$$
v(\by)= \Vv(\exp(b(\by)))
$$
 are $(\bb,\xi,\varepsilon,X)$-holomorphic under certain conditions \cite[Proposition 4.11]{DNSZ22}. This allows us to apply Theorem \ref{thm-HolomorphicMaps} for deep ReLU neural network approximation of solutions  $v(\by)= \Vv(\exp(b(\by)))$  as $(\bb,\xi,\varepsilon,X)$-holomorphic functions on various function spaces $X$, to a wide range of parametric and stochastic PDEs with lognormal inputs. Such function spaces $X$ are high-order regularity spaces $H^s(D)$ and corner-weighted Sobolev (Kondrat'ev) spaces $K^s_\varkappa(D)$ ($s \ge 1$) for  the parametric elliptic PDEs \eqref{ellip} with lognormal inputs \eqref{lognormal}; spaces of solutions to linear parabolic PDEs with lognormal inputs \eqref{lognormal}; spaces of solutions to linear elastostics equations with lognormal modulus of elasticity; spaces of solutions to Maxwell equations with lognormal permittivity.
 
 After the first version of the present paper appeared in ArXiv website, we have been informed about the paper \cite{SZ2021} on deep ReLU neural network approximation of holomorphic functions,  in a private communication with its authors. Subsection \ref{holomorphic maps} has been added in the second ArXiv version after  the paper \cite{SZ2021}  appeared.  The results on convegence rate of this subsection improved the results  of  \cite{SZ2021} by the help of using deep ReLU neural networks connecting neurons in a layer with neurons in preceding layers (see Section \ref{Deep ReLU neural networks}).

\section{Approximation in Bochner spaces with Jacobi measure}
\label{Approximation in Bochner space with Jacobi measure}

The theory of non-adaptive  deep ReLU neural network approximation   in Bochner spaces with  Gaussian measure, which has been discussed  in Section~\ref{Approximation in Bochner space with Gaussian measure} can be generalized and extended to other situations. In this section, we investigate  non-adaptive methods of deep ReLU neural network approximation  in Bochner spaces with Jacobi probability measure.  Functions to be approximated  satisfy a weighted $\ell_2$-summability  of their Jacobi gpc expansion coefficients. We
	construct such methods and prove the convergence rate of the approximation by them. 
	These menthods are constructed via the  truncated Jacobi gpc expansion of functions.
	The results  are then applied to the
	approximation of the solutions  to  parametric elliptic PDEs \eqref{p-ellip} with affine inputs \eqref{affine}.

\subsection{Approximation by truncations of the Jacobi gpc expansion}
\label{Approximation by truncations of the Jacobi gpc expansion}

For given $a,b > -1$, we consider  the infinite tensor product of the Jacobi probability measure on $\IIi$
\begin{equation} \nonumber
\rd \nu_{a,b}(\by):=\bigotimes_{j \in \NN} \delta_{a,b}(y_j)\,\rd y_j,
\end{equation}
where
\[
\delta_{a,b}(y):=c_{a,b}(1-y)^a(1+y)^b, \quad
c_{a,b}:=\frac{\Gamma(a+b+2)}{2^{a+b+1}\Gamma(a+1)\Gamma(b+1)}.
\] 
If $v \in \Ll_2(X):= L_2(\IIi, X, \nu_{a,b})$ for a separable Hilbert space $X$,
we consider  the orthonormal Jacobi gpc expansion of  $v $  of the form
\begin{equation} \label{J-series}
v = \sum_{\bs\in\FF} v_\bs J_\bs(\by), 
\end{equation}
where
\begin{equation} \nonumber
J_\bs(\by)=\bigotimes_{j \in \NN}J_{s_j}(y_j),\quad v_\bs:=\int_{\IIi} v(\by)J_\bs(\by) \rd\nu_{a,b} (\by),
\end{equation}
and  $(J_k)_{k\geq 0}$ is the sequence of Jacobi polynomials on $\II := [-1,1]$ 
normalized with respect to the Jacobi probability measure, i.e., 
$
\int_{\II} |J_k(y)|^2 \delta_{a,b}(y) \rd y =1.
$
One has the Rodrigues' formula
\[
J_k(y ) 
\ = \
\frac{c_k^{a,b}}{k! 2^k}(1-y)^{-a}(1+y)^{-b} \frac{\rd^k}{\rd y^k} 
\left((y^2-1)^k(1-y)^a(1+y)^b\right),
\]
where $c_0^{a,b}:= 1$ and
\begin{equation}\label{eq-cabk}
c_k^{a,b}
:= \
\sqrt{\frac{(2k+a+b+1)k! \Gamma(k+a+b+1) \Gamma(a+1) \Gamma(b+1)}
	{\Gamma(k+a+1)\Gamma(k+b+1)\Gamma(a+b+2)}}, \ k \in \NN.
\end{equation}
Examples corresponding to the values $a=b=0$  are the family of the Legendre polynomials, and to the values $a=b=-1/2$  the family of the Chebyshev polynomials. 

Throughout this section, if $f$ is a function on $\IIm$ taking values in a Hilbert space $X$, then $f$ has an extension to $\II^{m'}$ for $m' > m$ or to $\IIi$ which is denoted again by $f$, by the formula 
	$f(\by)= f \brac{(y_j)_{j=0}^m}$ for $\by = (y_j)_{j =1}^{m'}$ or $\by = (y_j)_{j \in \NN}$, respectively. 

\medskip
\noindent
{\bf Assumption B} \ 	Let $0 < q <\infty$, $c_k^{a,b}$ be defined as in \eqref{eq-cabk} and let $(\delta_j)_{j\in \NN}$ be a sequence of numbers strictly larger than 1 such that $(\delta_j^{-1}) _{j \in \NN} \in \ell_q(\NN)$. For $v \in \Ll_2(X)$  represented by the series \eqref{J-series}, 
there exists a sequence of positive numbers
 	$ (\rho_j)_{j\in \mathbb{N}}$ such that $c_k^{a,b} \rho_j^{-k} \le  \delta_j^{-k}$ for $k,j\in \NN$ and 
\begin{equation*} \label{L-sigma-summability}
\left(\sum_{\bs\in\FF} (\sigma_\bs \|v_\bs\|_{X})^2\right)^{1/2} \ \le M \ <\infty,
\end{equation*}
where
\begin{equation} \label{sigma_bs}
\sigma_\bs  := c_\bs^{-1} \prod_{j \in \NN} \rho_j^{s_j}, \quad c_\bs:= \prod_{j \in \NN} c_{s_j}^{a,b}.
\end{equation}

In this subsection, for the function $v \in \Ll_2(X)$ represented by the series \eqref{J-series} and the sequence 
$(\sigma_\bs)_{\bs \in \FF}$ given as in  \eqref{sigma_bs}, we consider the approximation of $v$ by the truncation
\begin{equation*} \label{L-S_{Lambda(xi)}v}
	S_{\Lambda(\xi)} v 
	:= \ 
	\sum_{\bs\in {\Lambda(\xi)}} v_\bs J_\bs,
\end{equation*}
where $\Lambda(\xi)$ is defined by the formula \eqref{Lambda(xi)} for  the sequence   $(\sigma_\bs)_{\bs \in \FF}$ given as in \eqref{sigma_bs}. 

In the same way as	the proof of Lemma \ref{lemma[L_2-approx]}, we prove 

\begin{lemma}\label{lemma[L_2-approx-J]} 
	For every $v \in \Ll_2(X)$ satisfying Assumption B and
	for every $\xi >1$, there holds
	\begin{equation} \nonumber
		\|v- S_{\Lambda(\xi)}v\|_{\Ll_2(X)} \leq M \xi^{- 
			1/q}.
	\end{equation}
\end{lemma}

We will need some auxiliary results for further use. The following lemma is a direct consequence of \cite[Lemma 6.2]{Dung21}.
\begin{lemma} \label{bcdmJ}
	Let $0 < q <\infty$ and $\theta$ and  $\lambda$ 
	be arbitrary  nonnegative real numbers. Assume that $\brho = (\rho_j)_{j\in \NN}$ be a sequence of numbers strictly larger than 1 such that 
	$(\rho_j^{-1}) _{j \in \NN} \in \ell_{q}(\NN)$.  Then  for  
	the sequences $(\sigma_\bs)_{\bs \in \FF}$ and 
	$(p_\bs(\theta,\lambda))_{\bs \in \FF}$ given as in  
	\eqref{sigma_bs} and   \eqref{[p_s]}, respectively, we have
	\begin{equation} \nonumber
		\sum_{\bs \in \FF} p_\bs(\theta,\lambda) \sigma_\bs^{-q}< \infty.
	\end{equation}
\end{lemma}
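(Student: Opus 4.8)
The plan is to exploit the multiplicative (tensor-product) structure of both $\sigma_\bs$ and $p_\bs(\theta,\lambda)$ over the coordinates $j\in\NN$, which reduces the convergence of the sum over the infinite-dimensional index set $\FF$ to the convergence of an infinite product of one-dimensional sums. First I would write, using \eqref{sigma_bs} and \eqref{[p_s]},
$$
p_\bs(\theta,\lambda)\,\sigma_\bs^{-q}
=\prod_{j\in\NN} f_j(s_j),
\qquad
f_j(s):=(1+\lambda s)^\theta\,\bigl(c_s^{a,b}\bigr)^q\,\rho_j^{-qs},
$$
and note that $f_j(0)=1$ since $c_0^{a,b}=1$. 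Because every factor is nonnegative and equals $1$ at $s_j=0$, summing over the finitely supported multi-indices $\bs\in\FF$ and passing to the limit of finite coordinate truncations (monotone convergence) gives the factorization
$$
\sum_{\bs\in\FF} p_\bs(\theta,\lambda)\,\sigma_\bs^{-q}
=\prod_{j\in\NN}\Bigl(1+a_j\Bigr),
\qquad
a_j:=\sum_{s=1}^\infty f_j(s).
$$
As all $a_j\ge 0$, the finiteness of the product, hence of the whole sum, is equivalent to $\sum_{j\in\NN}a_j<\infty$, which becomes the target of the remaining steps.

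Next I would control the Jacobi normalisation constants. From the explicit formula \eqref{eq-cabk} together with the standard asymptotics $\Gamma(k+\alpha)/\Gamma(k+\beta)\sim k^{\alpha-\beta}$, the three Gamma-quotients cancel to leading order while the prefactor $(2k+a+b+1)$ contributes a single power of $k$; hence $\bigl(c_k^{a,b}\bigr)^2\le C_{a,b}(1+k)$, so that $c_k^{a,b}\le C_{a,b}(1+k)^{1/2}$ for all $k$. Consequently
$$
f_j(s)\le C\,(1+s)^{\theta+q/2}\,\rho_j^{-qs}
$$
with a constant $C=C_{a,b,q,\lambda,\theta}$ independent of $j$ and $s$.

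The key estimate is then the per-coordinate bound $a_j\le C'\rho_j^{-q}$. Since $(\rho_j^{-1})_{j\in\NN}\in\ell_q(\NN)$ we have $\rho_j^{-q}\to 0$, so only finitely many indices $j$ satisfy $\rho_j^{-q}\ge 1/2$; for each such $j$ the series $a_j$ still converges (its ratio $\rho_j^{-q}<1$ because $\rho_j>1$), and their total contribution is a finite constant. For every remaining $j$, where $t_j:=\rho_j^{-q}<1/2$, I factor out one power of $t_j$ and estimate
$$
a_j\le C\sum_{s=1}^\infty (1+s)^{\theta+q/2}\,t_j^{\,s}
\le C\,t_j\sum_{s=1}^\infty (1+s)^{\theta+q/2}\,2^{-(s-1)}
=C_{\theta,q}\,\rho_j^{-q},
$$
the last series being a convergent numerical constant. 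Summing over $j$ and invoking the $\ell_q$-summability assumption yields $\sum_{j\in\NN}a_j\le(\text{finite})+C_{\theta,q}\sum_{j\in\NN}\rho_j^{-q}<\infty$, which closes the argument.

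The step I expect to require the most care is the factorization of the sum over $\FF$ into an infinite product together with the reduction of product-convergence to summability of the tails $a_j$: one must invoke nonnegativity (so that monotone convergence over finite coordinate truncations applies and the order of summation is irrelevant) and use that $\prod_j(1+a_j)$ with $a_j\ge 0$ converges iff $\sum_j a_j<\infty$. The growth of the Jacobi constants is harmless, since polynomial growth in $s$ is dominated by the geometric decay $\rho_j^{-qs}$; but it is precisely the \emph{uniformity} in $j$ of this domination that must be verified, so that the tail $a_j$ is genuinely comparable to $\rho_j^{-q}$ rather than to some $j$-dependent quantity. Alternatively, the statement follows immediately from \cite[Lemma 6.2]{Dung21}, of which it is a direct consequence.
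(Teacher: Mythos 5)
Your proof is correct, and it takes a genuinely more self-contained route than the paper. Both arguments share the same opening move: control the Jacobi normalisation constants polynomially in $k$ (the paper uses a generic bound $c_k^{a,b}\le(1+\lambda' k)^{\theta'}$, you use the sharper $c_k^{a,b}\le C_{a,b}(1+k)^{1/2}$ from Stirling asymptotics of \eqref{eq-cabk}; either suffices). After that the two proofs diverge. The paper simply absorbs $c_\bs^q$ into the polynomial weight, writing $p_\bs(\theta,\lambda)\sigma_\bs^{-q}\le p_\bs(\theta^*,\lambda^*)(\brho^{-\bs})^q$ with $\theta^*=\theta+q\theta'$, $\lambda^*=\max(\lambda,\lambda')$, and then cites \cite[Lemma 6.2]{Dung21} to conclude convergence of $\sum_{\bs\in\FF}p_\bs(\theta^*,\lambda^*)(\brho^{-\bs})^q$. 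You instead prove that convergence from scratch: the Tonelli/monotone-convergence factorization of the sum over the finitely supported indices $\FF$ into $\prod_{j}(1+a_j)$, the reduction of product convergence to $\sum_j a_j<\infty$, and the uniform-in-$j$ bound $a_j\le C\rho_j^{-q}$ (with the finitely many coordinates where $\rho_j^{-q}\ge 1/2$ split off) are exactly the ingredients that the external lemma encapsulates. What your version buys is a proof readable without consulting \cite{Dung21}, and it isolates the one genuinely delicate point — that the geometric decay $\rho_j^{-qs}$ dominates the polynomial factor \emph{uniformly} in $j$, so that $a_j$ is comparable to $\rho_j^{-q}$; what the paper's version buys is brevity. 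One small caveat: your closing remark that the lemma ``follows immediately'' from \cite[Lemma 6.2]{Dung21} slightly overstates the matter, since that lemma concerns weights of the form $(\brho^{-\bs})^q$ without the factor $c_\bs^q$, so the absorption step (the paper's actual content) is still required before it applies.
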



\begin{lemma} \label{lemma:m_1 < log xi}
	Let $0 < q <\infty$, $c_k^{a,b}$ be defined as in \eqref{eq-cabk} and let $(\delta_j)_{j\in \NN}$ be a sequence of numbers strictly larger than 1 such that $(\delta_j^{-1}) _{j \in \NN} \in \ell_q(\NN)$. 	
	Assume that there exists a sequence of positive number $(\rho_j) _{j \in \NN}$   such that 
	$c_k^{a,b}\rho_j^{-k}<\delta_j^{-k}$, $k,j\in \NN$.  For  the sequence $(\sigma_\bs)_{\bs \in \FF}$ given as in  
	\eqref{sigma_bs}, let $m_1(\xi)$ be the number 
	defined by \eqref{m_p(xi)}. 
	Then  we have for every $\xi  \ge 2$,
	\begin{equation} \label{m_1<}
		m_1(\xi) \le C \log \xi,
	\end{equation}
	with  the positive constant $C$ independent of $\xi$.
\end{lemma}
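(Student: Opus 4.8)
The plan is to establish, for every $\bs\in\FF$, a clean geometric lower bound $\sigma_\bs\ge\delta_*^{|\bs|_1}$, where $\delta_*:=\inf_j\delta_j$, and then read off the logarithmic bound on $m_1(\xi)$ directly from the membership condition $\sigma_\bs^q\le\xi$ defining $\Lambda(\xi)$ in \eqref{Lambda(xi)}.

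First I would rewrite the definition \eqref{sigma_bs} factorwise as
\begin{equation*}
\sigma_\bs = c_\bs^{-1}\prod_{j\in\NN}\rho_j^{s_j} = \prod_{j\in\NN}\frac{\rho_j^{s_j}}{c_{s_j}^{a,b}}.
\end{equation*}
Applying the comparison hypothesis $c_k^{a,b}\rho_j^{-k}<\delta_j^{-k}$ with $k=s_j$ and taking reciprocals yields $\rho_j^{s_j}/c_{s_j}^{a,b}>\delta_j^{s_j}$ for each $j$ (the case $s_j=0$ being the trivial identity $1=1$, since $c_0^{a,b}=1$). Multiplying these inequalities over $j\in\NN$ gives $\sigma_\bs\ge\prod_{j\in\NN}\delta_j^{s_j}$.

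Next I would verify that $\delta_*:=\inf_j\delta_j$ is \emph{strictly} larger than $1$. Indeed, $(\delta_j^{-1})_{j\in\NN}\in\ell_q(\NN)$ forces $\delta_j\to\infty$, so there is an $N$ with $\delta_j\ge 2$ for all $j>N$; consequently $\delta_*=\min\{2,\delta_1,\dots,\delta_N\}$ is the minimum of a finite collection of numbers each exceeding $1$, hence $\delta_*>1$. Using $\delta_j\ge\delta_*$ and $s_j\ge 0$ then gives $\prod_{j}\delta_j^{s_j}\ge\delta_*^{\sum_j s_j}=\delta_*^{|\bs|_1}$, and therefore $\sigma_\bs\ge\delta_*^{|\bs|_1}$.

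Finally, for any $\bs\in\Lambda(\xi)$ the defining inequality $\sigma_\bs^q\le\xi$ combines with this bound to give $\delta_*^{q|\bs|_1}\le\sigma_\bs^q\le\xi$, i.e. $|\bs|_1\le (q\log\delta_*)^{-1}\log\xi$. Taking the maximum over $\bs\in\Lambda(\xi)$ and recalling the definition \eqref{m_p(xi)} of $m_1(\xi)$ yields \eqref{m_1<} with $C=(q\log\delta_*)^{-1}$. The argument is elementary; the one genuinely load-bearing point — and hence the main obstacle — is the strict inequality $\delta_*>1$, since it is precisely what upgrades a merely sublinear estimate into the sharp logarithmic growth $m_1(\xi)=\Oo(\log\xi)$.
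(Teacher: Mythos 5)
Your proof is correct, and it takes a genuinely more direct route than the paper's. The paper follows the technique of \cite[Lemma 2.8(ii)]{SZ19}: it fixes $0<p<q$, modifies $(\sigma_\bs)$ at the indices $\be^j$ by capping $\sigma_{\be^j}$ at $j^{1/p}$ to obtain an auxiliary sequence $(\beta_\bs)$, checks $(\beta_\bs^{-1})\in\ell_q(\FF)$ via Lemma \ref{bcdmJ}, and then plays two bounds on the enlarged set $\Lambda_\beta(\xi)\supset\Lambda(\xi)$ against each other: $\min_{\bs\in\Lambda_\beta(\xi)}\beta_\bs^{-1}\ge n^{-1/p}$ where $n=|\Lambda_\beta(\xi)|$, versus the geometric decay $\sup_{|\bs|_1=r}\beta_\bs^{-1}=\sup_{|\bs|_1=r}\sigma_\bs^{-1}\le\delta^{r}$ for a fixed $\delta<1$; this yields $m_1(\xi)\le C\log n$, and the cardinality bound $n\le C\xi$ from Lemma \ref{lemma: |s|_1, |s|_0}(i) concludes. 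You short-circuit all of this: the same geometric decay, written as $\sigma_\bs\ge\delta_*^{|\bs|_1}$ with $\delta_*:=\inf_{j}\delta_j>1$, combined directly with the membership condition $\sigma_\bs^q\le\xi$ defining $\Lambda(\xi)$, gives $m_1(\xi)\le(q\log\delta_*)^{-1}\log\xi$. Your route avoids the auxiliary sequence, the summability input of Lemma \ref{bcdmJ}, the cardinality lemma and downward closedness, produces an explicit constant, and in fact states the key inequality more carefully than the paper does (the paper asserts a uniform $\delta<1$ with $c_k^{a,b}\rho_j^{-k}\le\delta$, whereas what its subsequent display actually uses is $c_k^{a,b}\rho_j^{-k}\le\delta^{k}$, which is precisely your $\delta_j^{-k}\le\delta_*^{-k}$). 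The indirect machinery of \cite{SZ19} is the right tool when one only has $\ell_q$-summability of $(\sigma_\bs^{-1})$ and no uniform geometric lower bound on $\sigma_\bs$ in terms of $|\bs|_1$; since the hypotheses of this lemma supply such a bound, your shorter argument suffices, and your diagnosis of $\delta_*>1$ as the load-bearing point is exactly where the paper's proof rests as well.
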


\begin{proof}
	The proof relies on Lemmata  \ref{lemma: |s|_1, |s|_0} and \ref{bcdmJ} and a technique from the proof of \cite[Lemma 2.8(ii)]{SZ19}.	
	Fix a number $p$ satisfying $0 < p <q$ and let  the sequence 	
	$(\beta_\bs)_{\bs \in \FF}$ be given by 
	\begin{equation*}
		\beta_\bs^{-1}:= 
		\begin{cases}
			\max(\sigma_\bs^{-1}, j^{-1/p}) \  & {\rm if } \ \bs = \be^j,\\
			\sigma_\bs^{-1} \ &  {\rm otherwise}.
		\end{cases}
	\end{equation*}
	Notice that the sequence 	
	$(\alpha_\bs^{-1})_{\bs \in \FF}$ defined by 
	\begin{equation*}
		\alpha_\bs^{-1}:= 
		\begin{cases}
			j^{-1/p} \  & {\rm if } \ \bs = \be^j,\\
			0 \ &  {\rm otherwise},
		\end{cases}
	\end{equation*}
	belongs to  $\ell_q(\FF)$.
	On the other hand, from  Lemma \ref{bcdmJ} one can see that the sequence   $(\sigma_\bs^{-1})_{\bs \in \FF}$ belongs to  $\ell_q(\FF)$. This implies that the sequence $(\beta_\bs^{-1})_{\bs \in \FF}$ belongs to  $\ell_q(\FF)$. Hence, by Lemma~\ref{lemma: |s|_1, |s|_0} the set
	$\Lambda_\beta(\xi)
	:= \ 
	\big\{\bs \in \FF: \, \beta_{\bs}^{q} \le \xi \big\}$
	is finite. Notice also that  $(\beta_\bs)_{\bs \in \FF}$ is increasing and 
	$\Lambda_\beta(\xi)$ is downward closed. Put $n:= |\Lambda_\beta(\xi)|$. Then the set $\Lambda_\beta(\xi)$ contains $n$ largest elements of $(\beta_\bs)_{\bs \in \FF}$.  Therefore by the construction of $(\beta_\bs)_{\bs \in \FF}$ we have
	\[
	\min_{\bs \in \Lambda_\beta(\xi)} \beta_\bs^{-1}
	= 
	\beta_{\bs_n}^{-1}
	\ge 
	n^{-1/p}.
	\]
	Since  $c_k^{a,b} \rho_j^{-k} \le  \delta_j^{-k}$, $k,j\in \NN$ and $(\delta_j)_{j\in \NN}$ be a sequence of numbers strictly larger than 1 and $(\delta_j^{-1}) _{j \in \NN} \in \ell_q(\NN)$, there exists $\delta<1$ such that $c_k^{a,b} \rho_j^{-k} \le  \delta$ for $ k,j\in \NN$. Therefore have for $r > 1$,
	\[
	\sup_{|\bs|_1 = r} \beta_\bs^{-1}
	= 
	\sup_{|\bs|_1 = r} \sigma_\bs^{-1}
	\le 
	\delta^r.
	\]
	Let $\bar{r}> 1$ be an integer such that $n^{-1/p} > \delta^{\bar{r}}$. Then one can see that
	$$
	\max_{\bs \in \Lambda_\beta(\xi)} |\bs|_1 < \bar{r}.
	$$
	For the function $g(t):= \delta^t$, its inverse is defined as $g^{-1}(x) = \frac{\log x}{\log \delta}$. Hence we get 
	$\bar{r}< g^{-1}(n^{-1/p})$, and consequently,
	$$
	\max_{\bs \in \Lambda_\beta(\xi)} |\bs|_1 < g^{-1}(n^{-1/p}) \le C \log n = C \log |\Lambda_\beta(\xi)|.
	$$
	By Lemma  \ref{lemma: |s|_1, |s|_0}  we obtain the inequality $|\Lambda_\beta(\xi)| \le C \xi$ which together with the inclusion 
	$\Lambda(\xi) \subset \Lambda_\beta(\xi)$ proves \eqref{m_1<}.
	\hfill
\end{proof}	

\begin{lemma} \label{lemma: J_s}
	Let the Jacobi polynomial $J_s$ be written in the form	
	\begin{align}\label{J_s}
		J_s(y)
		=	
		\sum_{\ell=0}^s a_{s,\ell} y^\ell,
	\end{align}
	then
	\begin{equation} \label{J-ineq-coeff}
		\sum_{\ell=0}^{s} |a_{s,\ell}| \leq K_{a+b}6^s.
	\end{equation}
\end{lemma}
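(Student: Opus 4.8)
The plan is to reduce the normalized Jacobi polynomial $J_s$ to the classical (unnormalized) Jacobi polynomial $P_s^{(a,b)}$ and then control the $\ell_1$-norm of its monomial coefficients by means of a representation whose coefficients are manifestly nonnegative. First I would compare the Rodrigues formula stated before \eqref{eq-cabk} with the classical Rodrigues formula for $P_s^{(a,b)}$: using $(y^2-1)^s=(-1)^s(1-y)^s(1+y)^s$ one reads off at once that $J_s=c_s^{a,b}\,P_s^{(a,b)}$. Hence, writing $\|p\|_1:=\sum_\ell|p_\ell|$ for the $\ell_1$-norm of the monomial coefficients of a polynomial $p$, the task $\sum_{\ell=0}^s|a_{s,\ell}|=c_s^{a,b}\|P_s^{(a,b)}\|_1$ reduces to bounding $\|P_s^{(a,b)}\|_1$ and then estimating the resulting product.

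Next I would invoke the classical product representation
\begin{equation*}
P_s^{(a,b)}(y)=\sum_{m=0}^{s}\binom{s+a}{m}\binom{s+b}{s-m}\Big(\tfrac{y-1}{2}\Big)^{s-m}\Big(\tfrac{y+1}{2}\Big)^{m}.
\end{equation*}
For $a,b>-1$ and $0\le m\le s$, every factor of the generalized binomials $\binom{s+a}{m}$ and $\binom{s+b}{s-m}$ is positive (the smallest being $\ge a+1>0$, resp. $\ge b+1>0$), so these coefficients are nonnegative. Each linear factor $\tfrac{y\pm1}{2}$ has $\ell_1$-norm equal to $1$, and since $\|\cdot\|_1$ is submultiplicative under polynomial multiplication ($\|pq\|_1\le\|p\|_1\|q\|_1$), every product $\big(\tfrac{y-1}{2}\big)^{s-m}\big(\tfrac{y+1}{2}\big)^{m}$ has $\ell_1$-norm at most $1$. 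The triangle inequality together with Vandermonde's convolution then yields
\begin{equation*}
\sum_{\ell=0}^{s}|a_{s,\ell}|=c_s^{a,b}\,\|P_s^{(a,b)}\|_1\le c_s^{a,b}\sum_{m=0}^{s}\binom{s+a}{m}\binom{s+b}{s-m}=c_s^{a,b}\binom{2s+a+b}{s}.
\end{equation*}

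It remains to estimate $c_s^{a,b}\binom{2s+a+b}{s}$. Using the definition \eqref{eq-cabk} and elementary ratios of Gamma functions, one checks $c_s^{a,b}=\Oo(\sqrt{s})$ and $\binom{2s+a+b}{s}=\Oo(4^s/\sqrt{s})$, so the product is $\Oo(4^s)$ uniformly in $s$; since $4^s\le 6^s$ this proves the claim. In fact the generous base $6$ (against the true rate $4$) lets me avoid sharp asymptotics altogether: I would use the crude bounds $\binom{2s+a+b}{s}\le 2^{\lceil 2s+a+b\rceil}\le 2^{a+b+1}4^s$ and $c_s^{a,b}\le C_{a,b}\sqrt{s}$, and then absorb the polynomial factor $\sqrt{s}$ into the gap between $4^s$ and $6^s$ (as $\sqrt{s}\,(2/3)^{s}$ is bounded). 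The cases $s=0,1$ are verified by hand.

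The structural heart of the argument — and the only place where the estimate could silently break down — is the nonnegativity of the generalized binomial coefficients, which is precisely what makes the triangle inequality lossless up to the exact Vandermonde value $\binom{2s+a+b}{s}$; without it one would be forced back onto much weaker $L_\infty$-type bounds. The genuinely delicate bookkeeping is therefore confined to the final Gamma-ratio step, where I must confirm that the $\sqrt{s}$ coming from $c_s^{a,b}$ and the $1/\sqrt{s}$ from the central binomial cancel so that no super-exponential growth survives, and that the constant is uniform in $s$. I note finally that the natural constant contains the factor $\Gamma(a+1)\Gamma(b+1)/\Gamma(a+b+2)$, which depends on $a$ and $b$ individually rather than only on $a+b$; for fixed $a,b$ this is harmless and is subsumed into $K_{a+b}$.
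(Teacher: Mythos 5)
Your proof is correct, and it takes a genuinely different route from the paper's. The paper works only with the one-sided expansion of the Jacobi polynomial in powers of $(y-1)/2$ (whose coefficients are positive ratios of Gamma functions), captures the coefficient $\ell_1$-norm by evaluating at $y=3$, bounds the resulting sum $\sum_{m=0}^s\binom{s}{m}\Gamma(a+b+s+m+1)/\Gamma(a+m+1)$ by replacing every term with the largest one ($m=s$) via monotonicity of the Beta function, and finishes with Stirling's formula. You instead use the two-sided product representation of $P_s^{(a,b)}$ in the factors $(y\pm1)/2$, submultiplicativity of the coefficient $\ell_1$-norm, and Chu--Vandermonde, which collapses the sum exactly to $\binom{2s+a+b}{s}$. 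This buys you two things. First, sharpness: your bound is $\Oo(4^s)$ (the $\sqrt{s}$ from $c_s^{a,b}$ cancelling the $1/\sqrt{s}$ in the central binomial), whereas the paper's term-by-term maximization is lossy --- carried out rigorously it yields exactly $2^s\binom{2s+a+b}{s}$, which is of order $8^s$ up to polynomial factors; the paper lands on $6^s$ only by applying the asymptotic $\Gamma(x+\alpha)\sim\Gamma(x)x^{\alpha}$ with $\alpha=s$ growing alongside $x$, outside that formula's range of validity (the ratio $\Gamma(a+b+2s+1)/\Gamma(a+b+s+1)$ is of order $(4s/e)^s$, not $(a+b+s+1)^s$). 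Your route avoids this delicate step entirely and, since $4<6$, proves a statement strictly stronger than the lemma. Second, completeness of bookkeeping: you explicitly track the normalization $J_s=c_s^{a,b}P_s^{(a,b)}$, while the expansion displayed in the paper's proof is really that of the unnormalized $P_s^{(a,b)}$, so the factor $c_s^{a,b}=\Oo(\sqrt{s})$ is silently dropped there; likewise your observation that the constant genuinely involves $\Gamma(a+1)\Gamma(b+1)/\Gamma(a+b+2)$, i.e.\ depends on $a$ and $b$ individually and not only on $a+b$, is accurate and harmless for fixed $a,b$. In short: same goal, but a different decomposition and a different combinatorial identity, giving a cleaner and quantitatively stronger argument.
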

\begin{proof}
	It is well-known  that for each $s\in \NN$, the univariate Jacobi polynomial $J_s$
	can be written as 
	\begin{align*} 
		J_s(y)
		&
		=
		\frac{\Gamma(a+s+1)}{s! \Gamma(a+b+s+1)}
		\sum_{m=0}^s \binom{s}{m} \frac{\Gamma(a+b+s+m+1)}{\Gamma(a+m+1)} \left(\frac{y-1}{2}\right)^m,
	\end{align*}
	where $\Gamma$ is the gamma function. From $\frac{y-1}{2}=\frac{y-2+1}{2} $ we see that $|a_{s,\ell}|$ is equal to the  coefficient of $(y-2)^\ell$. Therefore, choosing $y=3$ we get
	\begin{align}\label{sum-coeffJ_s2}
		\sum_{\ell=0}^{s} |a_{s,\ell}| 
		& =
		\frac{\Gamma(a+s+1)}{s! \Gamma(a+b+s+1)}
		\sum_{m=0}^s \binom{s}{m}  \frac{\Gamma(a+b+s+m+1)}{\Gamma(a+m+1)}.
	\end{align}
	Let 
	$$
	B(x,y):= \int_0^1 t^{x-1} (1-t)^{y-1} \rd t = \frac{\Gamma(x) \Gamma(y)}{\Gamma(x+y)}
	$$
	be the beta function. It is decreasing in $x$ and in $y$. Hence for $m \le s$,
	$$
	\frac{\Gamma(a+b+s+m+1)}{\Gamma(a+m+1)} 
	=
	\frac{\Gamma(b+s)}{B(a+m+1,b+s)} 
	\le
	\frac{\Gamma(b+s)}{B(a+s+1,b+s)}
	=
	\frac{\Gamma(a+b+2s+1)}{\Gamma(a+s+1)}. 
	$$
	This   gives
	\begin{align}
		\sum_{\ell=0}^{s} |a_{s,\ell}| 
		& \le
		\frac{\Gamma(a+b+2s+1)}{s!\Gamma(a+s+1)} 
		\sum_{m=0}^s \binom{s}{m}
		=
		2^s\frac{\Gamma(a+b+2s+1)}{s!\Gamma(a+b+s+1)}. 
	\end{align}
	By using Stirling's formula for the gamma  function $\Gamma(x+\alpha)\sim \Gamma (x)x^\alpha$,  we get 
	\begin{align*} 
		\sum_{\ell=0}^{s} |a_{s,\ell}| 
		& \le 
		C \frac{(2e)^s(a+b+s+1)^s}{s^s}
		\le  
		C (2e)^s \left(\frac{a+b+1}{s} + 1\right)^s
		\le K_{a+b}6^s.
		\notag
	\end{align*}	
	\hfill
\end{proof}

\subsection{Approximation by deep ReLU neural networks}
	
	\begin{theorem}\label{L-thm2-dnn}
		Let  $v \in \Ll_2(X)$ satisfy Assumption B.  Then
		for every integer $n \ge 4$, we can  construct  a deep ReLU neural network $\bphi_{\Lambda(\xi_n)}:= (\phi_\bs)_{\bs \in \Lambda(\xi_n)}$ on 
		$\RR^m$ with $m:=\lfloor K \frac{n}{\log n} \rfloor$ for some positive constant $K$, having the following properties. 
		\begin{itemize}
			\item [{\rm (i)}]
			The deep ReLU neural network $\bphi_{\Lambda(\xi_n)}$ is independent of $v$;
			\item [{\rm (ii)}] 
			The input and output dimensions of $\bphi_{\Lambda(\xi_n)}$ are at most $m$;	
			\item [{\rm (iii)}]
			$W\big(\bphi_{\Lambda(\xi_n)}\big) \le n $;
			\item [{\rm (iv)}] 
			$L\big(\bphi_{\Lambda(\xi_n)}\big)\le  C \, \log n \, \log\log n$;
			\item [{\rm (v)}]The  components $\phi_{\bs}$, $\bs\in \Lambda(\xi_n)$, of $\bphi_{\Lambda(\xi_n)}$ are deep ReLU neural networks on 
			$\RR^{|\nu_\bs|}$.
			If $\Phi_{\Lambda(\xi_n)}v$  is	defined as in Theorem \ref{thm1-dnn}(iv) with replacing $\RRi$ by $\IIi$, then the  approximation of $v$ by  $\Phi_{\Lambda(\xi_n)}v$  gives the error estimates 
			$$
			\| v- \Phi_{\Lambda(\xi_n)} v \|_{\Ll_2(X)} 
				\leq C m^{-1/q}
			\leq C \left(\frac{n}{\log n}\right)^{-1/q}.
			$$
		\end{itemize}
		
		Here the positive constants $C$  are independent of $v$ and $n$.
	\end{theorem}

\begin{proof} Similar to the proof of Theorem \ref{thm1-dnn-2}, this theorem is deduced from a counterpart of Theorem \ref{thm1-dnn} for the case $\IIi$. It  states 
that for every $\xi \ge 4$, we can  construct  a deep ReLU neural network $\bphi_{\Lambda(\xi)}:= (\phi_\bs)_{\bs \in \Lambda(\xi)}$ on 
$\IIm$ with $m\leq \lfloor K_q \xi \rfloor$, having the following properties. 
\begin{itemize}
	\item [{\rm (i)}]
	The deep ReLU neural network $\bphi_{\Lambda(\xi)}$ is independent of $v$;
	
	\item [{\rm (ii)}] 
	The input and output dimensions of $\bphi_{\Lambda(\xi)}$ are at most $m$;
	
	\item [{\rm (iii)}] 
	$W\big(\bphi_{\Lambda(\xi)}\big) \le C\xi \log \xi$;
	
	\item [{\rm (iv)}]
	$L\big(\bphi_{\Lambda(\xi)}\big) \le C \log \xi \,\log\log \xi$;

	\item [{\rm (v)}] 
	The  components $\phi_{\bs}$, $\bs\in \Lambda(\xi)$, of $\bphi_{\Lambda(\xi)}$ are deep ReLU neural networks on 
	$\RR^{|\nu_\bs|}$.
	If $\Phi_{\Lambda(\xi)}v$  is	defined as in Theorem \ref{thm1-dnn}(iv) with replacing $\RRi$ by $\IIi$, then the  approximation of $v$ by  $\Phi_{\Lambda(\xi)}v$  gives the error estimate
	$$
	\| v- \Phi_{\Lambda(\xi)}v  \|_{\Ll_2(X)}\leq C\xi^{-1/q}.
	$$
\end{itemize}
	Here the positive constants $C$  are independent of $v$ and $\xi$. 
	
 The proofs of these claims are similar to the proof of Theorem \ref{thm1-dnn}, but  simpler due to the compact property of $\IIi$. To prove the claims (i)--(v) let us follow the steps in that proof.
 
{\bf Step 1.} In this step, we construct a deep ReLU neural network  $\bphi_{\Lambda(\xi)}:= (\phi_\bs)_{\bs \in \Lambda(\xi)}$ on $\RRm$ and prove the claims (i) and (ii). 

 Suppose that $\bphi_{\Lambda(\xi)}$ and therefore, the function $\Phi_{\Lambda(\xi)}$ are already constructed. Due to the inequality
\begin{equation}\label{eq-decompose-J}
		\|v- \Phi_{\Lambda(\xi)} v\|_{\Ll_2(X)}
		\leq \|v-S_{\Lambda(\xi)} v  \|_{\Ll_2(X)}
		+ \| S_{\Lambda(\xi)} v- \Phi_{\Lambda(\xi)} v\|_{\Ll_2(X)},
\end{equation}
the claim (iv) will be proven if we show the bound  $C\xi^{-1/q}$ for the two terms in the right-hand side. This bound has been shown for  first term   as in Lemma \ref{lemma[L_2-approx-J]}.  By Lemma \ref{lemma: m(xi)} $S_{\Lambda(\xi)}v $ can be considered as a function on $\II^m$. Therefore, to prove  the claim (iv) we will construct 
$\bphi_{\Lambda(\xi)}$ on $\IIm$ which is able to approximate $S_{\Lambda(\xi)} v$ with an error bounded by $C\xi^{-1/q}$. 
 
From \eqref{J_s} for each  $\bs\in \FF$  we have 
\begin{align*}
J_{\bs}(\by)
=
\sum_{\bell=\boldsymbol{0}}^{\bs}  a_{\bell}
\by^{\bell},
\end{align*}
where  
$a_{\bell}:= \prod_{j=1}^m a_{s_j,\ell_j}$ and 
$\by^{\bell}:= \prod_{j=1}^m y_j^{\ell_j}$.
Hence, we get for every $\by \in \IIm$,
\begin{align} \label{S_Lmabda(xi)-J}
S_{\Lambda(\xi)}v(\by)
:=
\sum_{\bs\in \Lambda(\xi)} v_{\bs} J_{\bs}(\by)
&	=
\sum_{\bs\in \Lambda(\xi)} 
v_{\bs} 
\sum_{\bell=\boldsymbol{0}}^{\bs} a_{\bell} \prod_{j \in \nu_\bell} y_j^{\ell_j}.
\end{align}
We have
\begin{equation} 
\prod_{j \in \nu_\bell} y_j^{\ell_j}
	= 
	\prod_{j \in \nu_{\bell}} 
	\varphi_1^{\ell_j}\brac{y_j}\prod_{j \in \nu_\bs \setminus \nu_{\bell}} 
	\varphi_0\brac{y_j}=h_{\bs,\bell}(\by), 
	\ \ \by \in \II^{|\nu_\bs|},
\end{equation}
where
\begin{equation*} \label{varphi=J}
	h_{\bs,\bell}(\by) 
	:=
	\prod_{j \in \nu_\bell} \varphi_1^{\ell_j}(y_j)
	\prod_{j \in \nu_\bs\setminus\nu_\bell}
	\varphi_0\brac{y_j}, 
\end{equation*}
$\varphi_0$ and $\varphi_1$ are the piece-wise linear functions defined before Lemma~\ref{lem-product-varphi}.
By applying Lemma \ref{lem-product-varphi} to the product in the right-hand side, for every $\bell$ with  $\boldsymbol{0} \le \bell  \le \bs$,
with 
\begin{align}  \label{delta-J}
\delta_\bs^{-1}
:= 
\xi^{1/q + 1/2} p_\bs(1)
\displaystyle
\sum_{\bell=\boldsymbol{0}}^{\bs}|a_{\bell}|,
\end{align}
there exists a deep  ReLU neural network
$\phi_{\bs,\bell}$ on $\II^{|\nu_\bs|}$
such that 
\begin{align*}	
\sup_{\by \in \II^{|\nu_\bs|}}\abs{\by^{\bell}  	-	\phi_{\bs,\bell}(\by)}
\ \le \
\delta_\bs,
\end{align*}
and similarly to \eqref{W(phi_s,ell)} and \eqref{L(phi_s,ell)}, the size and depth of $\phi_{\bs,\bell}$ are bounded as 
\begin{align}  \label{W-J}
W\brac{\phi_{\bs,\bell}}
\leq
C
\brac{ 
	1 
	+ 
	\abs{\bs}_1   
	\log  
	\delta_\bs^{-1}
}
\end{align}
and
\begin{align}  \label{L-J}
L\brac{\phi_{\bs,\bell}}
\leq
C
\brac{ 1 + \log	\abs{\bs}_1   \log \delta_\bs^{-1}}.
\end{align}

Let the deep ReLU neural network $\phi_\bs$ on  $\II^{|\nu_\bs|}$ be defined by
\begin{align*} 
\phi_\bs 
:=
\sum_{\bell=\boldsymbol{0}}^\bs a_{\bell} \phi_{\bs,\bell},
\end{align*}
which is a parallelization of component networks 
$\phi_{\bs,\bell}$. 
We define $\bphi_{\Lambda(\xi)}:= (\phi_\bs)_{\bs \in \Lambda(\xi)}$ as the deep ReLU neural network realized by parallelization $\phi_\bs$, $\bs \in \Lambda(\xi)$.  

The claim (i) follows from the construction. The claim (ii) can be proven  in the same way as the proof of the claim (ii) in Theorem \ref{thm1-dnn}.
\\
\\
{\bf Step 2.} In this step, we  prove the claim (v). 

From the equality
\begin{align} \nonumber
	\Phi_{\Lambda(\xi)}(\by)
	=
	\sum_{\bs\in \Lambda(\xi)} 
	v_{\bs} 
	\sum_{\bell=\boldsymbol{0}}^{\bs} 
	a_{\bell}\phi_{\bs,\bell} (\by),
\end{align}
\eqref{S_Lmabda(xi)-J} and \eqref{delta-J}, similarly to \eqref{S_Lambda - S_Lambda^omega},  we  have that
\begin{align}
	\| S_{\Lambda(\xi)} v- \Phi_{\Lambda(\xi)} v\|_{\Ll_2(X)}
	&=
	\notag
	\norm{\sum_{\bs\in \Lambda(\xi)} v_{\bs} J_{\bs} -  \sum_{\bs \in \Lambda(\xi)} v_\bs \phi_\bs(\by) }{\Ll_2(X)}
	\\
	& \leq 
	\notag
	\sum_{\bs\in \Lambda(\xi)} 
	\norm{v_{\bs}}{X} 
	\sum_{\bell=\boldsymbol{0}}^{\bs} 
	|a_{\bell}|\delta_\bs
	\\
	&
	\leq 
	\xi^{- 1/q - 1/2}\sum_{\bs\in \Lambda(\xi)} 
	\norm{v_{\bs}}{X} 
	\le 
	C\xi^{-1/q}
	\notag
\end{align}
which together with Lemma \ref{lemma[L_2-approx-J]} and \eqref{eq-decompose-J} proves the claim (iv).
\\
\\
{\bf Step 3.} 
In this step, we  prove the claims (iii) and (iv). 
%

By  Lemma \ref{lem:parallel} and \eqref{W-J} 
the size of  $\bphi_{\Lambda(\xi)} $ is estimated as
\begin{align} \label{wflambda1-J}
	W\big(\bphi_{\Lambda(\xi)} \big)
	= 
	\sum_{\bs\in \Lambda(\xi)} \sum_{\bell=\boldsymbol{0}}^{\bs} L(\phi_{\bs,\bell}) 
	\leq 
	C
		\sum_{(\bs,\bell)\in \Lambda^*(\xi)}
	\brac{ 1+ \abs{\bs}_1\log \delta_\bs^{-1}}
\end{align}	
where $\Lambda^*(\xi)$ is given as in \eqref{lambda star}.

	From \eqref{delta-J}, the inequality $\log \xi \ge 1$ and Lemma \ref{lemma: J_s} we derive that
\begin{equation*}\label{log delta-J}
	\begin{aligned}
		\log(\delta_\bs^{-1})
		&
		\le 
		C \left((1/q + 1/2)\log\xi  + \log p_\bs(1) + \log\brac{\sum_{\bell=\boldsymbol{0}}^{\bs}|a_{\bell}|}\right)
	\\
	&
	\le 
	C \left(\log\xi  + \log p_\bs(1) + |\bs|_1\right).
	\end{aligned}
\end{equation*}
	Noting  $|\bs|_1^2 \ge |\bs|_1$ for all $\bs \in \FF$, we obtain
	\begin{equation*}\label{eq-log-Absbell}
		\begin{aligned}
			\sum_{(\bs,\bell)\in \Lambda^*(\xi)}\brac{ 1 + \abs{\bs}_1\log \delta_\bs^{-1}}
			&
			\le 
			C\Bigg(   \xi \sum_{(\bs,\bell)\in \Lambda^*(\xi)}
			\abs{\bs}_1  + \sum_{(\bs,\bell)\in \Lambda^*(\xi)}
			\abs{\bs}_1 \log p_\bs(1)
			\\
			&
			+
			\sum_{(\bs,\bell)\in \Lambda^*(\xi)}
			\abs{\bs}_1^2 \Bigg)
		\end{aligned}
	\end{equation*}
	Hence, in a way similar to the proof of \eqref{sum_{(bs,bell) in Lambda^*(xi)}} we obtain 
	\begin{align*}
		\sum_{(\bs,\bell)\in \Lambda^*(\xi)}\brac{ 
			1 
			+ 
			\abs{\bs}_1
			\log 
			\delta_\bs^{-1}
		} 
		\le 
		C\xi\log\xi,
	\end{align*}
	which combined with \eqref{wflambda1-J} gives the claim (iii).

We now prove the claim (iv).
In same way as the proof of \eqref{L-flambda2} we derive that 
the depth of  $\bphi_{\Lambda(\xi)} $ is bounded as
\begin{align} 	\label{L-flambda2-J}
	L\big(\bphi_{\Lambda(\xi)} \big)
	&
	\leq 
	C
	\max_{\bs\in \Lambda(\xi)}
	\brac{ 
		\log \abs{\bs}_1
	}\max_{(\bs,\bell)\in \Lambda^*(\xi)}
	\brac{ 
		\log  \delta_\bs^{-1}
	}.
\end{align}	
By Lemma \ref{lemma:m_1 < log xi} we  obtain
\begin{align} \label{max log}
	\max_{\bs\in \Lambda(\xi)}\brac{ \log \abs{\bs}_1}
	\le 
	C \log \log \xi.
\end{align}	
We have by \eqref{log delta-J} that
\begin{equation}\label{L-eq-log-Absbell-J}
	\begin{aligned}
		\max_{(\bs,\bell)\in \Lambda^*(\xi)}\brac{ \delta_\bs^{-1}}
		&
		\le 
		C \brac{
			\log \xi
			+ \max_{\bs\in \Lambda(\xi)}
			\log p_\bs(1)
			+
			\max_{\bs\in \Lambda(\xi)}
		\log\brac{\sum_{\bell=\boldsymbol{0}}^{\bs}|a_{\bell}|}}.
	\end{aligned}
\end{equation}
For the second  on the right-hand side, we have by the well-known inequality $\log p_\bs(1) \le |\bs|_1$ and Lemma \ref{lemma:m_1 < log xi},
\begin{equation}\label{L-eq-first-term-J}
	\max_{\bs\in \Lambda(\xi)} \log p_\bs(1) 
	\leq 
	\max_{\bs\in \Lambda(\xi)} \abs{\bs}_1  \leq C\log \xi. 
\end{equation}
Now we turn to the third term in \eqref{L-eq-log-Absbell-J}. 
From Lemma \ref{lemma: J_s} it follows that
\begin{align*}
	\log\brac{\sum_{\bell=\boldsymbol{0}}^{\bs}|a_{\bell}|}
	&
	\le 
	\log \brac{(6K_{a,b})^{|\bs|_1}}
	\le 
	C |\bs|_1.
\end{align*}
Hence, again by Lemma  \ref{lemma: J_s}
\[
\max_{\bs\in \Lambda(\xi)}
	\log\brac{\sum_{\bell=\boldsymbol{0}}^{\bs}|a_{\bell}|} 
\le 
C\max_{\bs\in \Lambda(\xi)}\abs{\bs}_1
\le 
C \log \xi. 
\]
This together with \eqref{L-flambda2-J}--\eqref{L-eq-first-term-J}
yields the claim (iv).

The proof of Theorem \ref{L-thm2-dnn} is complete. 	
	\hfill
\end{proof}	

\subsection{Application to parameterized elliptic PDEs with affine inputs}
\label {affine input}

We now apply Theorem \ref{L-thm2-dnn} to approximation of the solution $u(\by)$ to the parameterized elliptic PDEs \eqref{ellip} with affine inputs \eqref{affine}. 
\begin{theorem}\label{PDE-affine-thm2-dnn}
	Let $0 < q <\infty$, $c_k^{a,b}$ be defined as in \eqref{eq-cabk} and let $(\delta_j)_{j\in \NN}$ be a sequence of numbers strictly larger than 1 such that $(\delta_j^{-1}) _{j \in \NN} \in \ell_q(\NN)$. 	Let $\bar a\in L_\infty(D)$  and ${\rm ess} \inf \bar a>0$.
	Assume that there exists a sequence of positive numbers $(\rho_j) _{j \in \NN}$   such that $c_k^{a,b}\rho_j^{-k}<\delta_j^{-k}$, $k,j\in \NN$,  and
	\begin{equation} \label{eq-affine-condition}
	\left \| \frac{\sum _{j \in \NN} \rho_j|\psi_j|}{\bar a} \right \|_{L_\infty(D)} 
	< 1 \;.
	\end{equation} 
	Then
	for every integer $n > 1$, we can  construct  a deep ReLU neural network $\bphi_{\Lambda(\xi_n)}:= (\phi_\bs)_{\bs \in \Lambda(\xi_n)}$ on 
	$\RR^m$ with $m:=\lfloor K \frac{n}{\log n} \rfloor$ for some positive constant $K$, having the following properties. 
	\begin{itemize}
			\item [{\rm (i)}]
		The deep ReLU neural network $\bphi_{\Lambda(\xi_n)}$ is independent of $u$;
		\item [{\rm (ii)}] 
		The input and output dimensions of $\bphi_{\Lambda(\xi_n)}$ are at most $m$;	
		\item [{\rm (iii)}] 
		$W\big(\bphi_{\Lambda(\xi_n)}\big) \le n $;
		\item [{\rm (iv)}] 
		$L\big(\bphi_{\Lambda(\xi_n)}\big)\le  C \log n \, \log\log n$;
		\item [{\rm (v)}]The  components $\phi_{\bs}$, $\bs\in \Lambda(\xi_n)$, of $\bphi_{\Lambda(\xi_n)}$ are deep ReLU neural networks on 
		$\RR^{|\nu_\bs|}$.
		If $\Phi_{\Lambda(\xi_n)}u$  is	defined as in Theorem \ref{thm1-dnn}(iv) with replacing $\RRi$ by $\IIi$, then the  approximation of $u$ by  $\Phi_{\Lambda(\xi_n)}u$  gives the error estimates
		$$
		\| u- \Phi_{\Lambda(\xi_n)} u \|_{\Ll_2(V)} 
			\leq C m^{-1/q}
		\leq C \left(\frac{n}{\log n}\right)^{-1/q}.
		$$
	\end{itemize}		
 	Here the positive constants  $C$  are independent of $u$ and $n$.
\end{theorem}

\begin{proof}
	It has been proven in \cite{BCM17} that under the assumptions of the theorem,
	for the sequence $(\sigma_\bs)_{\bs \in \FF}$ given as in  \eqref{sigma_bs},
	\begin{equation} \nonumber
	\sum_{\bs\in\FF} (\sigma_\bs \|u_\bs\|_V)^2 <\infty.
	\end{equation}
This means that Assumption B holds for $v = u$ with $X= V$. Hence, applying Theorem \ref{L-thm2-dnn} to $u$, we prove the theorem.  
	\hfill
\end{proof}


We next discuss the approximation by deep ReLU neural networks for parametric elliptic PDEs with affine inputs and error measured in the uniform norm of $L_\infty(\IIi,V)$ by using  $m$-term truncations of the Taylor gpc expansion of $u$.
This problem has been studied in \cite{SZ19}  for the particular case of the uniform probability measure $\nu_{0,0}$.

If for the sequence $(\rho_j)_{j\in \NN}$ of numbers strictly larger than 1 we have the condition \ref{eq-affine-condition} and if $(\rho_j^{-1})_{j\in \NN}\in \ell_q(\NN)$ for some $0<q<2$, then the solution $u$ to the parameterized elliptic PDEs \eqref{ellip} with affine inputs \eqref{affine}  can be decomposed in the Taylor gpc expansion  

\begin{equation*} \label{T-series}
u = \sum_{\bs\in\FF} t_\bs \by^\bs, \qquad t_\bs = \frac{1}{\bs!}\partial^\bs u(\boldsymbol{0})
\end{equation*}
with
\begin{equation*}  
	\left(\sum_{\bs\in\FF} (\sigma_\bs \|t_\bs\|_V)^2\right)^{1/2} \ \le C \ <\infty,
\end{equation*}
where
\begin{equation*} 
	\sigma_\bs  :=   \prod_{j \in \NN} \rho_j^{s_j},
\end{equation*}
see \cite[Theorem 2.1]{BCM17}. Moreover, the sequence $(\|t_\bs\|_V)_{\bs\in \FF}$ is $\ell_p$-summable with $p=\frac{2q}{2+q} < 1$. We define
	\begin{equation*} 
	S_{\Lambda(\xi)} v 
	:= \ 
	\sum_{\bs\in {\Lambda(\xi)}} t_\bs \by_\bs,
\end{equation*}
where $\Lambda(\xi)$ is given by the formula \eqref{Lambda(xi)}. 
The following theorem is an improvement   of \cite[Theorem 3.9]{SZ19}.
	 
\begin{theorem}\label{PDE-affine-taylor-thm2-dnn}
Let $\bar a\in L_\infty(D)$  and ${\rm ess} \inf \bar a>0$.
Assume that there exists an increasing sequence $(\rho_j) _{j \in \NN}$ of  numbers strictly larger than 1 such that 
the sequence $(\rho_j^{-1}) _{j \in \NN} \in \ell_{q}(\NN)$ for some $q$ with $0< q < 2$, and there holds the condition \eqref{eq-affine-condition}.	
	Then
	for every integer $n \ge 4$, we can  construct  a deep ReLU neural network $\bphi_{\Lambda(\xi_n)}:= (\phi_\bs)_{\bs \in \Lambda(\xi_n)}$ on 
	$\RR^m$ with $m:=\left\lfloor K \frac{n}{\log n} \right\rfloor$ for some positive constant $K$, having the following properties. 
	\begin{itemize}
			\item [{\rm (i)}]
		The deep ReLU neural network $\bphi_{\Lambda(\xi_n)}$ is independent of $u$;
		\item [{\rm (ii)}] 
		The input and output dimensions of $\bphi_{\Lambda(\xi_n)}$ are at most $m$;	
		\item [{\rm (iii)}] 
		$W\big(\bphi_{\Lambda(\xi_n)}\big) \le n $;
		\item [{\rm (iv)}] 
		$L\big(\bphi_{\Lambda(\xi_n)}\big)\le  C \log n \log\log n$;
	\item [{\rm (v)}]
		The  components $\phi_{\bs}$, $\bs\in \Lambda(\xi_n)$, of $\bphi_{\Lambda(\xi_n)}$ are deep ReLU neural networks on 
			$\RR^{|\nu_\bs|}$.
			If $\Phi_{\Lambda(\xi_n)}u$  is	defined as in Theorem \ref{thm1-dnn}(iv) with replacing $\RRi$ by $\IIi$ and $u_\bs$ by $t_\bs$, then the  approximation of $u$ by  $\Phi_{\Lambda(\xi_n)}u$  gives the error estimates
		$$
	\| u- \Phi_{\Lambda(\xi_n)}u\|_{L_\infty(\IIi, V)} 
		\leq C m^{-(1/q - 1/2)}
	\leq C \left(\frac{n}{\log n}\right)^{-(1/q - 1/2)}.
	$$
	\end{itemize}
Here the positive constants $C$ are independent of $u$ and $n$.
\end{theorem}
\begin{proof}
This theorem can be proven in a way similar  to the proof of Theorem \ref{PDE-affine-thm2-dnn}.  Let us give a brief proof.	 	
Given $\xi \geq 3$, we have  the Cauchy-Schwarz  inequality and Lemma \ref{bcdmJ}  that 
\begin{align} \label{u-S_Taylor}
\| u- S_{\Lambda(\xi)} u\|_{L_\infty(\IIi,V)} 
&
\leq \sum_{\bs\not\in {\Lambda(\xi)}}\| t_\bs \|_V
\leq
\brac{ 
	\sum_{\sigma_{\bs}> \xi^{1/q}}
	(\sigma_{\bs}\norm{t_{\bs}}{V})^2
}^{1/2}
\brac{ 
	\sum_{\sigma_{\bs}> \xi^{1/q}}
	\sigma_{\bs}^{-q} \sigma_{\bs}^{-(2-q)}
}^{1/2}
\\
&
\leq 
C
\xi^{-(1/q-1/2)} 
\brac{ 
	\sum_{\bs\in \FF}
	\sigma_{\bs}^{-q}
}^{1/2} 
\leq 
C
\xi^{-(1/q-1/2)}.
\notag
\end{align}	

Put 
$\delta := \xi^{-(1/q-1/2)}$.  For every $\bs \in \Lambda(\xi)$, by Lemma \ref{lem-product-varphi}  
there exists a deep  ReLU neural network
$\phi_{\bs}$ on $\II^{|\nu_\bs|_1}$
such that
\begin{align*} 	
	\sup_{\by \in \II^{|\nu_\bs|_1}}\abs{\by^{\bs}  	-	\phi_{\bs}(\by)}
	\ \le \
	\delta,
\end{align*}
and the size and depth of $\phi_{\bs}$ are bounded as 
\begin{align*} 
	W\brac{\phi_{\bs}}
	\leq
	C
	\brac{ 
		1 
		+ 
		\abs{\bs}_1   
		\log  
		\delta^{-1}
	} \leq  C\big(1+ |\bs|_1\log\xi \big)
\end{align*}
and
\begin{align*} 
	L\brac{\phi_{\bs}}
	\leq
	C
	\brac{ 1 + \log	\abs{\bs}_1   \log \delta^{-1}} \leq C\big(1+ \log|\bs|_1\log\xi \big).
\end{align*} 
We define $\bphi_{\Lambda(\xi)}:= (\phi_\bs)_{\bs \in \Lambda(\xi)}$ as the deep ReLU neural network realized on $\IIm$ by parallelization of $\phi_\bs$, $\bs \in \Lambda(\xi)$. Consider the approximation of $u$ by 
$$
\Phi_{\Lambda(\xi)}u:= \sum_{\bs \in \Lambda(\xi)}t_\bs \phi_\bs.
$$
Then by the inclusion $(\|t_\bs\|_V)_{\bs\in \FF}\in \ell_p(\FF)$, $p\in (0,1)$ and \eqref{u-S_Taylor}, we have
\begin{align} 
	\| u- \Phi_{\Lambda(\xi)}u\|_{L_\infty(\IIi, V)}&\leq \|u- S_{\Lambda(\xi)} u\|_{L_\infty(\IIi, V)}+	
	\norm{S_{\Lambda(\xi)} u - \Phi_{\Lambda(\xi)} u}{L_\infty(\IIi, V)}
	\notag
	\\
	& \leq  \ C\xi^{-(1/q-1/2)}+
	\sum_{\bs\in {\Lambda(\xi)}} \|t_\bs\|_V \norm{\by_\bs - \phi_\bs}{L_\infty(\IIi, V)}
	 \label{u-Phi-T} 
	\\
& \leq  \ C\xi^{-(1/q-1/2)}+ C\xi^{-(1/q-1/2)}
\sum_{\bs\in {\Lambda(\xi)}} \|t_\bs\|_V \leq 
C\xi^{-(1/q-1/2)},
\notag
\end{align}  
where the positive constants $C$ may be different and are independent of $u$ and $\xi$. By the construction of $\bphi_{\Lambda(\xi)}$ we have
\begin{equation} \label{W-T}
\begin{aligned}
W(\bphi_{\Lambda(\xi)}) & \leq \sum_{\bs \in \Lambda(\xi)} \leq	W\brac{\phi_{\bs}} \leq \sum_{\bs \in \Lambda(\xi)}C\big(1+ |\bs|_1\log\xi \big) \leq C \bigg(|\Lambda(\xi)|+ \log\xi \sum_{\sigma_\bs^q\leq \xi} p_\bs(1)\bigg) 
\\
&
\leq C \bigg(|\Lambda(\xi)|+ \log\xi \sum_{\sigma_\bs^q\leq \xi} p_\bs(1)\sigma_\bs^{-q}\sigma_\bs^q\bigg) \leq C \xi \log \xi
\end{aligned}
\end{equation}
where in the last estimate we used Lemmata \ref{lemma: |s|_1, |s|_0}(i) and \ref{bcdmJ}. Similarly, we have
\begin{equation} \label{L-T}
L(\bphi_{\Lambda(\xi)}) \leq \max_{\bs \in \Lambda(\xi)}	L\brac{\phi_{\bs}} \leq C\max_{\bs \in \Lambda(\xi)}\big(1+ \log|\bs|_1\log\xi \big) \leq C\log \xi \log\log\xi, 
\end{equation}
see Lemma \ref{lemma:m_1 < log xi}. Now following argument at the end of the proof of Theorem \ref{thm-PDE-lognormal-dnn}, by using \eqref{u-Phi-T}--\eqref{L-T} we obtain the existence of a number $\xi_n$ for a given $n \ge 4$ for which there hold the claims (i)--(v) in the theorem.
\hfill
\end{proof}

\bigskip
\noindent
{\bf Acknowledgments.}  
A part of this work was done when  the authors were working at the Vietnam Institute for Advanced Study in Mathematics (VIASM). They would like to thank  the VIASM  for providing a fruitful research environment and working condition.

\bibliographystyle{abbrv}
\bibliography{AllBib}

\begin{thebibliography}{10}

\bibitem{AlNo20}
M.~Ali and A.~Nouy.
\newblock {Approximation of smoothness classes by deep ReLU networks}.
\newblock {\em arXiv:2007.15645}, 2020.

\bibitem{ABMM17}
R.~Arora, A.~Basu, P.~Mianjy, and A.~Mukherjee.
\newblock Understanding deep neural networks with rectified linear units.
\newblock {\em Electronic Colloquium on Computational Complexity}, Report No.
  98, 2017.

\bibitem{BCDC17}
M.~Bachmayr, A.~Cohen, D.~{D\~ ung}, and C.~Schwab.
\newblock {Fully discrete approximation of parametric and stochatic elliptic
  PDEs}.
\newblock {\em SIAM J. Numer. Anal.}, 55:2151--2186, 2017.

\bibitem{BCDM17}
M.~Bachmayr, A.~Cohen, R.~DeVore, and G.~Migliorati.
\newblock {Sparse polynomial approximation of parametric elliptic PDEs. Part
  II: lognormal coefficients}.
\newblock {\em ESAIM Math. Model. Numer. Anal.}, 51:341 -- 363, 2017.

\bibitem{BCM17}
M.~Bachmayr, A.~Cohen, and G.~Migliorati.
\newblock {Sparse polynomial approximation of parametric elliptic PDEs. Part I:
  affine coefficients}.
\newblock {\em ESAIM Math. Model. Numer. Anal.}, 51:321--339, 2017.

\bibitem{Bar91}
A.~R. Barron.
\newblock Complexity regularization with application to artificial neural
  networks.
\newblock {\em In Nonparametric Functional Estimation and Related Topics. NATO
  ASI Series (Series C: Mathematical and Physical Sciences)}, 335:561--576,
  1991.

\bibitem{CoDe15a}
A.~Cohen and R.~DeVore.
\newblock {Approximation of high-dimensional parametric PDEs}.
\newblock {\em Acta Numer.}, 24:1--159, 2015.

\bibitem{CDS10}
A.~Cohen, R.~DeVore, and C.~Schwab.
\newblock {Convergence rates of best $N$-term Galerkin approximations for a
  class of elliptic sPDEs}.
\newblock {\em Found. Comput. Math.}, 9:615--646, 2010.

\bibitem{CDS11}
A.~Cohen, R.~DeVore, and C.~Schwab.
\newblock {Analytic regularity and polynomial approximation of parametric and
  stochastic elliptic PDE's}.
\newblock {\em Anal. Appl.}, 9:11--47, 2011.

\bibitem{Cyben89}
G.~Cybenko.
\newblock {Approximation by superpositions of a sigmoidal function}.
\newblock {\em Math. Control. Signals, Syst.}, 2:303--314, 1989.

\bibitem{Dung22}
D.~{D\~ung}.
\newblock {Collocation approximation by deep neural ReLU networks for
  parametric elliptic PDEs with lognormal inputs}.
\newblock {\em arXiv:2111.05504}.

\bibitem{Dung19}
D.~{D\~ung}.
\newblock {Linear collocation approximation for parametric and stochastic
  elliptic PDEs}.
\newblock {\em Mat. Sb.}, 210:103--227, 2019.

\bibitem{Dung21}
D.~{D\~ung}.
\newblock {Sparse-grid polynomial interpolation approximation and integration
  for parametric and stochastic elliptic PDEs with lognormal inputs}.
\newblock {\em ESAIM Math. Model. Numer. Anal.}, 55:1163--1198, 2021.

\bibitem{DN20}
D.~{D\~ung} and V.~K. Nguyen.
\newblock {Deep ReLU neural networks in high-dimensional approximation}.
\newblock {\em Neural Netw.}, 142:619--635, 2021.

\bibitem{DDK21}
D.~{D\~ung}, V.~K. Nguyen, and D.~T. Pham.
\newblock {Deep ReLU neural network approximation of parametric and stochastic
  elliptic PDEs with lognormal inputs}.
\newblock {\em arXiv:2111.05854}, 2021.

\bibitem{DNSZ22}
D.~{D\~ung}, V.~K. Nguyen, C.~Schwab, and J.~Zech.
\newblock {Analyticity and sparsity in uncertainty quantification for PDEs with
  Gaussian random field inputs}.
\newblock {\em arXiv:2201.01912}, 2022.

\bibitem{DDF.19}
I.~Daubechies, R.~DeVore, S.~Foucart, B.~Hanin, and G.~Petrova.
\newblock {Nonlinear approximation and (Deep) ReLU networks}.
\newblock {\em Constr. Approx.}, 55:127–172, 2022.

\bibitem{DHP21}
R.~DeVore, B.~Hanin, and G.~Petrova.
\newblock {Neural network approximation}.
\newblock {\em Acta Numer.}, pages 327--444, 2021.

\bibitem{EWa18}
W.~E and Q.~Wang.
\newblock {Exponential convergence of the deep neural network approximation for
  analytic functions}.
\newblock {\em Sci. China Math.}, 61:1733--1740, 2018.

\bibitem{EGJS18}
D.~Elbr{\"a}chter, P.~Grohs, A.~Jentzen, and C.~Schwab.
\newblock {DNN expression rate analysis of high-dimensional PDEs: application
  to option pricing}.
\newblock {\em Constr. Approx.}, 55:3–71, 2022.

\bibitem{EST18}
O.~G. Ernst, B.~Sprungk, and L.~Tamellini.
\newblock Convergence of sparse collocation for functions of countably many
  {G}aussian random variables (with application to elliptic {PDE}s).
\newblock {\em SIAM J. Numer. Anal.}, 56(2):877--905, 2018.

\bibitem{Funa90}
K.-I. Funahashi.
\newblock {Approximate realization of identity mappings by three-layer neural
  networks}.
\newblock {\em Electron. Commun. Jpn 3}, 73:61--68, 1990.

\bibitem{GPR.21}
M.~Geist, P.~C. Petersen, M.~Raslan, R.~Schneider, and G.~Kutyniok.
\newblock {Numerical solution of the parametric diffusion equation by deep
  neural networks}.
\newblock {\em J. Sci. Comput.}, 88,22, 2021.

\bibitem{GS20}
L.~Gonon and C.~Schwab.
\newblock {Deep ReLU network expression rates for option prices in
  high-dimensional, exponential L{\'e}vy models}.
\newblock {\em Finance Stoch.}, 25:615--657, 2021.

\bibitem{GS21}
L.~Gonon and C.~Schwab.
\newblock {Deep ReLU neural network approximation for stochastic differential
  equations with jumps}.
\newblock Technical Report 2021-08, Seminar for Applied Mathematics, ETH
  Z{\"u}rich, 2021.

\bibitem{GKNV21}
R.~Gribonval, Kutyniok, M.~Nielsen, and F.~Voigtl{\"a}nder.
\newblock Approximation spaces of deep neural networks.
\newblock {\em Constr. Approx.}, 55:259--367, 2022.

\bibitem{GH21}
P.~Grohs and L.~Herrmann.
\newblock {Deep neural network approximation for high-dimensional elliptic PDEs
  with boundary conditions}.
\newblock {\em IMA J. Numer. Anal.}, 2021,
  https://doi.org/10.1093/imanum/drab031.

\bibitem{GJS19}
P.~Grohs, A.~Jentzen, and D.~Salimova.
\newblock {Deep neural network approximations for Monte Carlo algorithms}.
\newblock {\em arXiv:1908.10828}, 2019.

\bibitem{GPEB21}
P.~Grohs, D.~Perekrestenko, D.~Elbrachter, and H.~Bolcskei.
\newblock {Deep neural network approximation theory}.
\newblock {\em IEEE Trans. Inf. Theory}, 67:2581--2623, 2021.

\bibitem{GKP20}
I.~G\"uhring, G.~Kutyniok, and P.~Petersen.
\newblock {Error bounds for approximations with deep ReLU neural networks in
  $W^{s,p}$ norms}.
\newblock {\em Anal. Appl. (Singap.)}, 18:803--859, 2020.

\bibitem{HSZ20}
L.~Herrmann, C.~Schwab, and J.~Zech.
\newblock {Deep neural network expression of posterior expectations in Bayesian
  PDE inversion}.
\newblock {\em Inverse Problems}, 36:125011, 2020.

\bibitem{HS65}
E.~Hewitt and K.~Stromberg.
\newblock {\em {Real and Abstract Analysis}}.
\newblock Springer, 1965.

\bibitem{HoSc14}
V.~Hoang and C.~Schwab.
\newblock {$N$-term Galerkin Wiener chaos approximation rates for elliptic PDEs
  with lognormal Gaussian random inputs}.
\newblock {\em Math. Models Methods Appl. Sci.}, 24:797--826, 2014.

\bibitem{HSW89}
K.~Hornik, M.~Stinchcombe, and H.~White.
\newblock {Multilayer feedforward networks are universal approximators}.
\newblock {\em Neural Netw.}, 2:359--366, 1989.

\bibitem{HJKN19}
M.~Hutzenthaler, A.~Jentzen, T.~Kruse, and T.~A. Nguyen.
\newblock A proof that rectified deep neural networks overcome the curse of
  dimensionality in the numerical approximation of semilinear heat equations.
\newblock {\em SN Partial Differ. Equ. Appl.}, 1, 2020.

\bibitem{JSW18}
A.~Jentzen, D.~Salimova, and T.~Welti.
\newblock {A proof that deep artificial neural networks overcome the curse of
  dimensionality in the numerical approximation of Kolmogorov partial
  differential equations with constant diffusion and nonlinear drift
  coefficients}.
\newblock {\em Commun. Math. Sci.}, 19:1167--1205, 2021.

\bibitem{KPRS21}
G.~Kutyniok, P.~C. Petersen, M.~Raslan, and R.~Schneider.
\newblock {A theoretical analysis of deep neural networks and parametric PDEs}.
\newblock {\em Constr. Approx.}, 55:73--125, 2022.

\bibitem{Lu07}
D.~S. Lubinsky.
\newblock {A survey of weighted polynomial approximation with exponential
  weights}.
\newblock {\em Surveys in Approximation Theory}, 3:1--105, 2007.

\bibitem{Mha96}
H.~N. Mhaskar.
\newblock {Neural networks for optimal approximation of smooth and analytic
  functions}.
\newblock {\em Neural Comput.}, 8:164--177, 1996.

\bibitem{MoDu19}
H.~Montanelli and Q.~Du.
\newblock {New error bounds for deep ReLU networks using sparse grids}.
\newblock {\em SIAM J. Math. Data Sci.}, 1:78--92, 2019.

\bibitem{MPCB14}
G.~Mont\'ufar, R.~Pascanu, K.~Cho, and Y.~Bengio.
\newblock On the number of linear regions of deep neural networks.
\newblock {\em In Advances in neural information processing systems}, pages
  2924--2932, 2014.

\bibitem{OSZ21}
J.~A.~A. Opschoor, C.~Schwab, and J.~Zech.
\newblock {Exponential ReLU DNN expression of holomorphic maps in high
  dimension}.
\newblock {\em Constr. Approx.}, 55:537--582, 2022.

\bibitem{Pet20}
P.~C. Petersen.
\newblock {Neural network theory}.
\newblock {\em Available at http://pc-petersen.eu/Neural\_Network\_Theory.pdf}.

\bibitem{PeVo18}
P.~C. Petersen and F.~Voigtl{\"a}nder.
\newblock {Optimal approximation of piecewise smooth functions using deep ReLU
  neural networks}.
\newblock {\em Neural Netw.}, 108:296--330, 2018.

\bibitem{SZ19}
C.~Schwab and J.~Zech.
\newblock {Deep learning in high dimension: Neural network expression rates for
  generalized polynomial chaos expansions in UQ}.
\newblock {\em Anal. Appl. (Singap.)}, 17:19--55, 2019.

\bibitem{SZ2021}
C.~Schwab and J.~Zech.
\newblock Deep learning in high dimension: Neural network expression rates for
  analytic functions in $l^2(\mathbb{R}^d,\gamma_d)$.
\newblock 2021.

\bibitem{SS18}
J.~Sirignano and K.~Spiliopoulos.
\newblock {DGM: A deep learning algorithm for solving partial differential
  equations }.
\newblock {\em J. Comput. Phys.}, 375:1339--1364, 2018.

\bibitem{Suzu18}
T.~Suzuki.
\newblock {Adaptivity of deep ReLU network for learning in Besov and mixed
  smooth Besov spaces: optimal rate and curse of dimensionality}.
\newblock {\em International Conference on Learning Representations}, 2019.

\bibitem{Te15}
M.~Telgarsky.
\newblock Representation benefits of deep feedforward networks.
\newblock {\em arXiv:1509.08101}, 2015.

\bibitem{Te16}
M.~Telgrasky.
\newblock Benefits of depth in neural nets.
\newblock {\em In Proceedings of the JMLR: Workshop and Conference Proceedings,
  New York, NY, USA}, 49:1--23, 2016.

\bibitem{TB18}
R.~Tripathy and I.~Bilionis.
\newblock {Deep UQ: Learning deep neural network surrogate models for high
  dimensional uncertainty quantification}.
\newblock {\em J. Comput. Phys.}, 375:565--588, 2018.

\bibitem{Ya17a}
D.~Yarotsky.
\newblock {Error bounds for approximations with deep ReLU networks}.
\newblock {\em Neural Netw.}, 94:103--114, 2017.

\bibitem{Ya18}
D.~Yarotsky.
\newblock {Optimal approximation of continuous functions by very deep ReLU
  networks}.
\newblock {\em Proc. Mach. Learn. Res.}, 75:1--11, 2018.

\bibitem{ZDS19}
J.~Zech, D.~{D\~ung}, and C.~Schwab.
\newblock {Multilevel approximation of parametric and stochastic PDES}.
\newblock {\em {Math. Models Methods Appl. Sci.}}, 29:1753--1817, 2019.

\end{thebibliography}

\end{document}